\newtheorem{Def}{Definition}[section]
\newtheorem{Prop}{Proposition}[section]
\newtheorem{Lemme}[Prop]{Lemma}
\newtheorem*{Rq}{Remark}
\newtheorem{Thm}{Theorem}
\newtheorem{Cor}[Prop]{Corollary}
\newtheorem{Hyp}{Hypotheses}
\newtheorem{Hyp1}[Hyp]{Hypothesis}
\newcommand{\R}{\ensuremath{\mathbb{R} \xspace}}
\newcommand{\T}{\ensuremath{\mathbb{T} \xspace}}
\newcommand{\norm}[1]{\left\lVert #1
  \right\rVert}
\newcommand{\QT}{\ensuremath{{Q_T} \xspace}}
\newcommand{\QTM}{\ensuremath{{Q_{T,M}} \xspace}}
\newcommand{\QtM}{\ensuremath{{Q_{t,M}} \xspace}}
\newcommand{\dd}{\mathrm{d}}
\newcommand{\N}{\ensuremath{\mathcal{N} \xspace}}
\newcommand{\1}{\ensuremath{\mathbb{1} \xspace}}
\newcommand{\E}{\ensuremath{\mathbb{E} \xspace}}
\renewcommand{\P}{\ensuremath{\mathbb{P} \xspace}}
\title{Stability of non-conservative cross diffusion model and approximation by stochastic particle systems}
\author{Vincent Bansaye \footnote{CMAP, INRIA, École polytechnique, Institut Polytechnique de Paris, 91120 Palaiseau, France} \and Alexandre Bertolino \footnote{LJLL (Paris Sorbonne) and  CMAP (Ecole polytechnique)} \and  Ayman Moussa \footnote{Sorbonne Université, CNRS, Université de Paris, Inria, Laboratoire Jacques-Louis Lions (LJLL), F-75005 Paris, France}}
\date{\today}
\begin{document}

\maketitle
\begin{abstract} 
We study the stability 
of  non-conservative deterministic cross diffusion models and prove that they are approximated by stochastic population models when the populations become locally large. In this model,  the   individuals of two species move, reproduce and die with rates sensitive to the local densities of the two species. Quantitative estimates are obtained and convergence is proved soon as  the population per site  and the number of sites go to infinity.
The proofs rely on the extension of stability estimates via duality approach initiated in  \cite{mbh} under a smallness condition and the  development of large deviation estimates for structured population models, which are of  independent interest. The proofs also involve   martingale estimates in $H^{-1}$ and improve the  approximation results in the conservative case as well. 
\end{abstract}

\tableofcontents

\section{Introduction} 


In \cite{SKT}, Shigesada, Kawasaki and Teramoto proposed a system of equations describing the spatial distribution of interacting individuals. This system, named after their initials SKT, focuses on the population densities $u$ and $v$ of two species and include two type of interactions: competition for resources and mutual agitation. These phenomena translate in the model as nonlinearities. Adding a reproduction rate for each species and considering only interspecific agitation, the system takes the general form  
\begin{equation} \label{eq:SKT:origin}
\left\{\begin{array}{ll}
\partial_t u - \Delta( (d_1+a_1 v) u) =  u(\rho_1-s_{11}u-s_{12}v),\\
\partial_t v - \Delta( (d_2+a_2 u) v) = v(\rho_2-s_{21}u-s_{22}v), \\
(u,v)(0, \cdot ) = (u_0, v_0).
\end{array} \right.
\end{equation}
The coefficients $\rho_1$ and $\rho_2$ are the reproduction rates, while the $s_{ij}$ coefficients are the interspecific and intraspecific competition terms for resources. In comparison with standard reaction-diffusion equations, the originality of this system stems from the cross-diffusion terms $a_k$ which model the interspecific agitation. In the previous model all the coefficients are assumed non-negative and $d_1 d_2\neq 0$. This last nondegeneracy condition amounts for the existence, for each species, of intrinsic agitation. This system was originally introduced in \cite{SKT} because the cross-diffusion terms induce a repulsive behavior at the macroscopic level for each species, leading to the existence of (stable) segregated equilibria. The study of these asymptotic states is historically the first mathematical facet that has been explored on the SKT system, and it is still an active domain of research, see for instance \cite{brekuci} and the reference therein. Surprisingly enough, the Cauchy problem for this system only started with \cite{kim1984smooth} and has been puzzling the mathematical community since then. While construction of global weak solutions is now well-understood (\cite{chenjun,dlmt,lepmou,dietmou}) for SKT systems and its variants, their uniqueness and link with more regular solutions for which only local statements are available \cite{amann,chen_jun_unique,chen_jun_unique_bis,galmou,desvmou} offer  challenging questions. In comparison, the derivation of SKT-type models, that is their realization as rigorous limit of microscopic systems in an appropriate scaling limit, is by far a younger area of research as the first rigorous attempt of derivation appeared in 2015 with the program of Fontbona and Méléard \cite{fontmel}. We discard here the fast-reaction limit program proposed in \cite{mimura} and followed for instance rigorously in \cite{DesTres} because this asymptotic may not be understood as a derivation from a simpler (microscopic) model. The approximation of the system by a collection of finite brownian trajectories led to several developments \cite{chendausjunbis,moussa_nonloc_tri} and ultimately to \cite{dietmou}. In an another direction, Daus, Desvillettes and Dietert proposed in \cite{DDD2019} another derivation program which relies on a spatial discretization. In this description, followed in dimension $1$, individuals move (in pairs) on a discrete grid with jumps on neighbor sites which are both random in direction and time. The frequency at which those jumps occur is directly linked to the number of agents present on the site. In   \cite{DDD2019},  the link with the stochastic model was yet formal, while the motion by pair allowed reversibility properties but was not justified in terms of modeling.  An hydrodynamic 
limit has  been implemented in \cite{mbh} leading to a complete derivation of the conservative SKT model in the one dimensional case. However, \cite{mbh} invited several natural generalization and in particular  the case of non-conservative cross-diffusion systems, including reaction terms like in \eqref{eq:SKT:origin} and the relaxation of the scaling required for convergence of the stochastic process. \\

This aim of this paper is to fill this gap. We explore the rigorous derivation of non-conservative SKT models of the following form 
\begin{equation} \label{eq:SKT}
\left\{\begin{array}{ll}
\partial_t u - \Delta( \mu_1(v) u) = R_1(u,v),\\
\partial_t v - \Delta( \mu_2(u) v) = R_2(u,v) \\
(u,v)(0, \cdot ) = (u_0, v_0).
\end{array} \right.
\end{equation}
Here, $R_k$ and $\mu_k$ are generic nonlinearities, with $\mu_1$ and $\mu_2$ lower bounded by a positive constant. As in \cite{mbh}, this SKT system will be obtained as the limit of a collection of interacting random walks, now also branching and dying.  The  new difficulties come from the additional (local) non-linearity due to birth and deaths. The main novelty is the control of the number of birth and deaths  to get  the convergence of the stochastic system and quantify it.  The proofs rely on time  change and comparison arguments and provide  large deviations estimates which are of independent interest. A second input of this work is the extension of the stability estimates of \cite{mbh}, under a smallness condition,  which allows both to cover reaction terms (births and deaths) and more general motion rates.
Last but not least, we improve here the convergence obtained  in \cite{mbh}. More precisely, the convergence of the stochastic process was proven (only) when the number of individuals per site $N$ is large enough compared to the number of sites $M$, namely $N\gg M^2 \gg 1$. Indeed, the normalisation $M^2/N$ appeared naturally in  quadratic variations of martingales when accelerating time by factor $M^2$ and renormalizing population size by $N$. Using here estimates of the jumps of martingales  in $H^{-1}$, we refine the bounds on martingales and only need that  $N,M\rightarrow\infty$. This yields  the minimal setting expected for our convergence and more generally for local approximation of the stochastic process and its deterministic limit. Inspiration here came from the works of Blount \cite{Blount1,Blount2} who considered one single species and linear term for the motion. Our result actually also complement the result of Blount \cite{Blount2} by relaxing scaling  conditions on $N,M$. 
Let us finally  mention that  we refine the distance used to measure the gap between the stochastic process and its deterministic limit, establishing pathwise and uniform-in-time convergence in the spirit of the Skorokhod topology. This  strengthens the approximation result of \cite{mbh}, even in the conservative setting. 
We believe that this approach can be extended to larger dimensions and more complex spatial structure, including random graphs. But this also raises new difficulties, that will be mentioned later, and we hope to achieve it in a forthcoming work. \\ 

In Subsection~\ref{subsec:not} below, we collect all the notations that will be used in this work and then state our main results in Subsection~\ref{subsec:main}. In Section~\ref{sec:2} we state and prove crucial (deterministic) stability estimate both in continuous and semi-discrete setting,  which are then used in Section~\ref{sec:convsto} to establish the convergence of a stochastic process defined on a discrete grid. In  Section~\ref{sec:convsto}, we first construct and characterize the stochastic process. We then prove martingale estimates to control the fluctuations of the stochastic process. In Subsection \ref{sec:nJumps}, we control the number of birth and deaths and the associated cumulative rates along time, but using suitable time change on jump Markov process. We can then combine these ingredients to quantify the convergence of the sequence of stochastic processes to crossed diffusion.

\subsection{Notation and definitions }\label{subsec:not}

We fix $d \in \mathbb N$ an integer. Throughout this article, we denote $\T^d := \R^d / \mathbb Z ^d$ the $d$-dimensional torus, on which for any function $u: \T^d \rightarrow \R$ the usual $L^p$ norm, mean and $L^2$ scalar product are denoted $\norm{u}_{L^p} = \left(\int_{\T^d} |u|^p\right)^{1/p} $, $[u] = \int_{\T^d} u $ and $ \langle u, v \rangle_{L^2}$. We denote $(H^s(\T^d), \norm{\cdot}_{H^s})$ the Sobolev spaces and $\langle \cdot, \cdot \rangle_{H^s}$ the associated scalar product. Their definition and properties are written in Section \ref{sec:sobolev} of the appendix. \\
For a time $T >0$, $Q_T := [0,T] \times \T^d$ is the closed periodic cylinder and we define the following norm:

\begin{equation} \label{eq:defTnorm} |||z |||_{T}^2 = \norm{z}_{L^\infty([0,T],H^{-1}(\T^d))}^2 + \norm{z}_{L^2(\QT)}^2. \end{equation}

For a positive integer $M$, $\T_M$ is the discretized 1-dimensional torus:
$$ \T_M := \{x_j \, |\, j \in \mathbb N \cap [0,M-1] \}, \quad \textnormal{ with } x_j:= \frac j M.$$ 
For a function $\mathbf u \in \R^{\T_M}$, we will often denote $u_j:=\mathbf u(x_j)$. We equip $\T_M$ with the uniform measure of total mass 1 and denote by $\ell^2(\T_M)$  the finite-dimensional vector space of real valued functions over $\T_M$. It will often be equipped with the $\ell^2$ scalar product
$$\langle \mathbf u, \mathbf v \rangle_{2,M} = \frac 1 M \sum_{j=1}^M u_j v_j,$$
sometimes also simply denoted $\langle \mathbf u, \mathbf v \rangle$. {Of course $\mathbf{u}$ and $\mathbf{v}$ can be understood as vectors but throughout all this work, we manipulate these objects as functions and this is why we write $\mathbf{u}\mathbf{v}$ for the element of $\ell^2(\mathbb{T}_M)$ whose $j$-th component is $u_j v_j$.}{ The product of functions $\mathbf u, \mathbf v \in \ell^2(\T_M)$ corresponds to the coordinate by coordinate product and will simply be denoted $\mathbf u \mathbf v$.}\\
For a time $T >0$, $\QTM := [0,T] \times \T_M$ is the discrete analogue of $\QT$, which we equip with the product measure.
For $0 \le i \le M$, the vector $\mathbf e_i \in \ell^2(\T_M)$ is defined by $\mathbf e_M = \mathbf e_0$ if $i=M$ and
$$\mathbf e_i(x_j) = \left\{\begin{array}{ll}
1 \textnormal{ if } i=j\\
0 \textnormal{ else}.
\end{array} \right.$$
We denote by $\mathbf 1 \in \ell^2(\T_M)$ the constant function  on $\T_M$ with value $1$.
For all $\mathbf u \in \ell^2(\T_M)$ we will also use the notation $u_j := \mathbf u(x_{j})$ with the convention $u_M = u_0$. The operator $\Delta_M : \ell^2(\T_M) \longrightarrow \ell^2(\T_M) $ is then defined by
$$ (\Delta_M u)_j = M^2(u_{j+1} + u_{j-1} - 2 u_j).$$
We also define the discrete analogues of $L^p$ norms by setting
$$\norm{\mathbf u}_{p,M} = \left( \frac 1 M \sum_{j=1}^M |u_j|^p \right)^{1/p}, \qquad \norm{\mathbf u}_{\infty} = \max_{1 \le j \le M} |u_j|. $$
We will also denote $[\mathbf u]_M$ the mean of $\mathbf u$:
$$[\mathbf u]_M =\frac 1 M \sum_{j=1}^M u_j. $$
Operator $\Delta_M$ being symmetric with real coefficients and having its kernel equal to constant functions, it induces an automorphism on its range that is the space of all mean free functions. We denote $\Delta_M^{-1}$ the reciprocal automorphism. By integration by parts or Abel transform, one can see that $-\Delta_M$ is non-negative so $-\Delta_M^{-1}$ is positive definite on mean free functions and one can define an analogue of the negative Sobolev norm $H^{-1}$ and the associated scalar product:
$$\langle \mathbf u , \mathbf v \rangle_{-1,M} = [\mathbf u]_M [\mathbf v]_M + \langle \mathbf u -[\mathbf u]_M , -\Delta_M^{-1}(\mathbf v -[\mathbf v]_M) \rangle,$$
$$\norm{\mathbf u}_{-1,M} = \langle \mathbf u , \mathbf u \rangle_{-1,M}^{1/2}.$$
We refer to Subsection \ref{secdiscret} for details.
When nothing is specified on the index, for discrete functions $\langle \cdot, \cdot \rangle$ will denote the $\langle \cdot, \cdot \rangle_{2,M}$ scalar product. For more details about Sobolev spaces and in particular $H^{-1}$, see Appendix \ref{sec:sobolev}. Norm
$||| \cdot |||_{T}$ also has its discrete analogue:
$$ ||| \mathbf u |||_{T,M}^2 = \sup_{t \le T} \norm{\mathbf u(t)}_{-1,M}^2 + \int_0^T \norm{\mathbf u(t)}_{2,M}^2 \, \dd t. $$

We now present interpolations/restrictions allowing  transitions between the continuous and discrete world. For $f : \T \longrightarrow \R$ continuous, we denote by 
$$ \widehat f^M= f_{|\T_M} : \T_M \rightarrow \R$$ the restriction of $f$ to $\T_M$. For $\mathbf u:\T_M \longrightarrow \R$, the $\T \longrightarrow \R$ function $\pi_M(\mathbf u) \in L^2(\T)$ is the affine interpolation of $\mathbf u$:
    $$\pi_M(u)(x) = \sum_{k=1}^M \theta(M(x-x_k))u(x_k), \qquad \textnormal{where }\theta(z)= (1- |z|)^+.$$
To alleviate notations, when no confusion is possible we omit the $M$ exponent. For example, $\widehat u ^M$ becomes $\widehat u$.

For two quantities $A$ and $B$, we will denote $A \lesssim B$ if there exists a constant $c$ depending only on the parameters of the problem, e.g. $\mu_i, R_i$ or $K_0$ and the solution $(u,v)$ defined in Hypotheses \ref{hyp} below, such that $A \le cB$.

For any real number $x$, we denote by $x^+= \max(0, x)$ and $x^-= \max(0, -x)$ respectively its positive and negative parts.

For a matrix $A$, $A^T$ is the transposed matrix and for a probability event $E$, $E^c$ is the complementary event.
For a finite-dimensional vector space $X$ and a time $T>0$, $\mathbb D([0,T], X)$ is the Skorokhod space, i.e. the space of right-continuous functions $f: [0,T] \longrightarrow X$ which have left-hand limits.

Throughout this document, finite dimensional vectors will be noted with bold characters and random variables with capital letters. leading to the following conventions:
\begin{center}
\begin{tabular}{c|c}
        Notation & Object \\ 
        \hline
        $\mathbf u$ & Function on $\T_M$\\ 
        $u$ & Function on $\T$\\ 
        $u_j$ & $j$-th coordinate of $\mathbf u$\\
        $\mathbf U$ & Random function on $\T_M$\\ 
        $U$ & Random real number \\
\end{tabular}
\end{center}
Time-dependent quantities will be indexed by the discretization parameters $M$ and $N$. We choose to omit the corresponding indices in the proofs. For example, a function $\mathbf u^M : \R_+ \rightarrow \ell^2(\T_M)$ and a stochastic process $H^{M,N} : \R_+ \rightarrow \R$ will be respectively denoted $\mathbf u$ and $H$ in proofs. \\
We will also rely on the following setting for the parabolic equations composing the system \eqref{eq:SKT}. 
\begin{Def}
For any $s \in \R$, any bounded function $\mu\in L^\infty(Q_T)$, $F\in L^1([0,T];H^s(\T^d))$ and any $z_0\in L^1(\T^d)$ we say that $z\in L^1(Q_T)$ is a solution of the Cauchy problem
\begin{align*}
    \partial_t z -\Delta(\mu z) &= F,\\
    z(0) &= z^0,
\end{align*}
if, for all $\varphi\in\mathscr{C}_c^\infty([0,T)\times\T^d)$  there holds
\begin{align*}
    -\int_{Q_T} z (\partial_t \varphi + \mu \Delta\varphi) = \int_{\T^d}z^0 \varphi(0) + \int_\QT F\varphi.
\end{align*}

\end{Def}

\subsection{Main results}\label{subsec:main}

This work contains three main results, ranging from the stability of the continuous SKT system to the convergence of the sequence of random processes.

\subsubsection{Stability estimate and semi discrete approximation for the SKT system}

The first result is an extension of the stability estimate given by Theorem 1 of \cite{mbh}. This extension includes reaction terms and more general interspecific agitation (diffusion).  Its proof in Section \ref{sec:2} will also enlighten the other proofs by focusing on a key argument: the use of a duality lemma.
Such approach involves the following smallness conditions  \begin{equation} \label{eq:smallness}
\norm u _{L^\infty(\QT)} \norm v _{L^\infty(\QT)} < \frac {\alpha_1 \alpha_2} {\norm { \mu_1'} _{L^ \infty(\R_+)}\norm {\mu_2'} _{L^\infty(\R_+)}}, 
\end{equation}
where $\norm { \mu_i'} _{L^ \infty(\R_+)}$ stands for the Lipschitz constant of $\mu_i$.
\begin{Thm}\label{th:cont_stab} 
 Assume  that $\mu_i, R_i$ are globally Lipschitz and $\inf_{\R_+}\mu_i = \alpha_i >0$ for $i \in \{ 1, 2\}$. Then for any $T>0$ and  all non-negative solutions $(u,v), (\overline{u}, \overline{v}) \in L^\infty(\QT )$ of \eqref{eq:SKT} with respective initial conditions $(u_0, v_0)$ and $(\overline u_0, \overline v_0)$, and $(u,v)$ verifying the smallness condition \eqref{eq:smallness},
$$ |||u- \overline u |||_T^2 + |||v- \overline v|||_T^2
\lesssim C e^{cT} \left( \norm {u_0 - \overline u_0} _{H^{-1}}^2 +\norm {v_0 - \overline v_0} _{H^{-1}}^2 \right),$$
where $c$ and the constant behind $\lesssim$ depend only on the $L^\infty$ norm and Lipschitz constants of $\mu_i$ and $R_i$ and on the difference between the two members of \eqref{eq:smallness}.
\end{Thm}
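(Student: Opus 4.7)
The plan is to subtract the two PDEs, linearise using the Lipschitz property of $\mu_i$ and $R_i$, and run a Pierre--Schmitt type duality argument on the resulting system for $(w,z):=(u-\bar u,\,v-\bar v)$. The key algebraic choice is the ``barred'' splitting
\begin{equation*}
\mu_1(v)u-\mu_1(\bar v)\bar u \;=\; \mu_1(\bar v)\,w + u\,A_1\,z,\qquad \mu_2(u)v-\mu_2(\bar u)\bar v \;=\; \mu_2(\bar u)\,z + v\,A_2\,w,
\end{equation*}
where $A_i$ is the discrete slope of $\mu_i$, satisfying $|A_i|\leq \norm{\mu_i'}_{L^\infty}$. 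This way the self-diffusion coefficients $\mu_i(\bar\cdot)$ stay bounded below by $\alpha_i$, while the cross-coupling coefficients carry the factors $u,v$ on which the smallness condition bears. The reaction differences are bounded pointwise by $\mathrm{Lip}(R_i)(|w|+|z|)$.

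I would split $w = [w]+(w-[w])$ and test the mean-free equation against $(-\Delta)^{-1}(w-[w])$. Integration by parts combined with $\mu_1(\bar v)\geq \alpha_1$ yields
\begin{equation*}
\tfrac12\tfrac{d}{dt}\norm{w-[w]}_{H^{-1}}^2 + \alpha_1\norm{w}_{L^2}^2 \;\leq\; \norm{u}_{L^\infty}\norm{\mu_1'}_{L^\infty}\norm{z}_{L^2}\norm{w}_{L^2} + C(\norm{w}_{L^2}+\norm{z}_{L^2})\norm{w-[w]}_{H^{-1}},
\end{equation*}
with a symmetric bound for $z$ involving $\alpha_2$. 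The means satisfy the scalar ODE $\frac{d}{dt}[w] = [R_1(u,v)-R_1(\bar u,\bar v)]$ (the diffusion integrates to zero on $\T^d$), hence $|[w](t)|^2 \lesssim |[w](0)|^2 + t\int_0^t(\norm{w}_{L^2}^2+\norm{z}_{L^2}^2)$, and similarly for $[z]$.

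The heart of the argument is the weighted sum of the two energy estimates with the multipliers
\begin{equation*}
\lambda_1 := \norm{v}_{L^\infty}\norm{\mu_2'}_{L^\infty},\qquad \lambda_2 := \norm{u}_{L^\infty}\norm{\mu_1'}_{L^\infty}.
\end{equation*}
Setting $K_1:=\lambda_2$ and $K_2:=\lambda_1$, this choice balances the cross-terms to $2K_1K_2\norm{w}_{L^2}\norm{z}_{L^2}$, which Young's inequality dominates strictly by the available dissipation $2K_2\alpha_1\norm{w}_{L^2}^2 + 2K_1\alpha_2\norm{z}_{L^2}^2$ precisely when $K_1K_2<\alpha_1\alpha_2$, which is exactly \eqref{eq:smallness}. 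The resulting absorption gap is proportional to $\alpha_1\alpha_2-K_1K_2$.

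Once the cross terms are absorbed, the reaction and mean contributions are absorbed into a small fraction of the remaining dissipation via Young's inequality, leaving only lower-order terms of the form $C(\norm{w}_{H^{-1}}^2+\norm{z}_{H^{-1}}^2)$. A Gronwall argument on $t\mapsto\norm{w(t)}_{H^{-1}}^2+\norm{z(t)}_{H^{-1}}^2$ then yields the announced $e^{cT}$ dependence, with constants depending only on $L^\infty$ bounds, Lipschitz constants of $\mu_i,R_i$, and on $\alpha_1\alpha_2-K_1K_2$. The main bookkeeping obstacle I anticipate is the simultaneous treatment of the means and the mean-free parts — which is precisely what forces the use of the full (not homogeneous) $H^{-1}$ norm on $\T^d$ — together with the fine tuning of the weights $(\lambda_1,\lambda_2)$ that turns the pointwise inequality \eqref{eq:smallness} into a strict absorption inequality.
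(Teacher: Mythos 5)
Your proposal follows essentially the same route as the paper: the same ``barred'' splitting of the diffusion terms, the same $H^{-1}$ duality estimate (the paper packages it as Lemma~\ref{lem:duality}, you derive it inline by testing the mean-free part against $(-\Delta)^{-1}(w-[w])$), the same use of the smallness condition to absorb the cross-coupling (the paper substitutes one $L^2$ inequality into the other, you take a weighted sum with $K_2,K_1$ — two equivalent ways to achieve the same absorption), and the same final Grönwall step. One bookkeeping caution: the bound $|[w](t)|^2 \lesssim |[w_0]|^2 + t\int_0^t(\norm{w}_{L^2}^2+\norm{z}_{L^2}^2)$ obtained by Cauchy--Schwarz puts a factor of $t$ in front of the full dissipation, which cannot be absorbed uniformly in $T$; the paper instead keeps $|[w]|\lesssim\norm{w}_{H^{-1}}$ and applies Young at the level of $\frac{d}{dt}[w]^2=2[w][R_1-\overline R_1]$, so the mean contributes only an $\varepsilon$-fraction of the dissipation plus a Grönwall-friendly $\int_0^T\norm{w}_{H^{-1}}^2$ — you should do likewise (or iterate on unit time intervals).
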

\begin{Rq}
Condition \eqref{eq:smallness} is naturally linked to the so-called \textsf{Petrovskii condition} for quasilinear parabolic systems which asks that all eigenvalues of the diffusion matrix have positive real part (we refer to \cite{galmou} for more details on this condition). For the system at stake, namely \eqref{eq:SKT}, the diffusive matrix is given by 
\[\begin{pmatrix}
\mu_1(u_2) & \mu_1'(u_2)u_1 \\
\mu_2'(u_1)u_2 & \mu_2(u_1)
\end{pmatrix}.\]
Since both $\mu_1$ and $\mu_2$ are positively lower-bounded, Petrovskii's condition amounts to ask that the determinant of the previous matrix remains positive that is 
\[\mu_1(u_2)\mu_2(u_1)-\mu_1'(u_2)\mu_2'(u_1)u_1u_2>0.\]
Since $\alpha_i= \inf_{\mathbb{R}_+} \mu_i$, Petrovskii's condition is directly implied by \eqref{eq:smallness}. Note however that Hadamard's ellipticity condition -- which asks that the diffusion matrix has a positive symmetric part and is known to be too restrictive for the study of quasilinear parabolic systems -- is not implied by \eqref{eq:smallness} (again, we refer to \cite{galmou} for a discussion about this other ellipticity condition).
\end{Rq}
As a corollary, we get a uniqueness result for the solutions of \eqref{eq:SKT}:
\begin{Cor} \label{cor:1}
If  $\mu_i$ is uniformly Lipschitz for $i\in\{1,2\}$ and if there exists $T>0$ and  a solution $(u,v) \in L^\infty(\QT)$ of \eqref{eq:SKT} satisfying the smallness condition \eqref{eq:smallness},
then $(u,v)$ is the unique solution in $ L^\infty(\QT)$ of \eqref{eq:SKT} with the initial condition $(u_0,v_0)$.
\end{Cor}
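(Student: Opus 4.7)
The plan is to invoke Theorem \ref{th:cont_stab} directly and exploit the asymmetry of its hypotheses. First I would let $(\overline u, \overline v) \in L^\infty(\QT)$ be any non-negative solution of \eqref{eq:SKT} with initial data $(u_0, v_0)$; uniqueness is naturally sought within the class of non-negative bounded solutions, consistent with the density interpretation of $u$ and $v$ underlying the theorem. The key observation is that only one of the two solutions needs to verify the smallness condition \eqref{eq:smallness}, and by hypothesis this is the case for $(u,v)$. Hence Theorem \ref{th:cont_stab} applies with $(u,v)$ as the reference and $(\overline u, \overline v)$ as the competitor, with no additional assumption on the latter beyond boundedness and non-negativity.

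Applying the stability estimate yields
$$|||u - \overline u|||_T^2 + |||v - \overline v|||_T^2 \lesssim C e^{cT}\bigl(\norm{u_0 - \overline u_0}_{H^{-1}}^2 + \norm{v_0 - \overline v_0}_{H^{-1}}^2\bigr),$$
and the right-hand side vanishes because the two solutions share the same initial data. By the definition \eqref{eq:defTnorm}, the triple-bar norm $||| \cdot |||_T$ dominates the $L^2(\QT)$ norm, so that $u = \overline u$ and $v = \overline v$ almost everywhere on $\QT$, which is exactly the desired uniqueness statement.

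No genuine obstacle arises here: the corollary is a formal consequence of the stability theorem. The only point worth emphasizing in the write-up is precisely this asymmetry in the smallness requirement, which is what allows us to compare the given solution $(u,v)$ against an arbitrary bounded non-negative competitor and thereby turn a quantitative stability estimate into a uniqueness statement.
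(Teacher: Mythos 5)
Your proof is correct and is the argument the paper intends: since Theorem~\ref{th:cont_stab} only requires the smallness condition \eqref{eq:smallness} on \emph{one} of the two solutions, applying the stability estimate with $(u,v)$ as the reference against any other non-negative bounded solution with the same initial data forces the triple-bar norm of the difference to vanish, and $|||\cdot|||_T$ controls $L^2(\QT)$. The only caveat (which you rightly flag) is that the theorem, and hence the uniqueness conclusion, genuinely lives in the class of \emph{non-negative} $L^\infty(\QT)$ solutions, and that the corollary's hypotheses implicitly carry over the requirements $\inf \mu_i = \alpha_i > 0$ and $R_i$ Lipschitz from Theorem~\ref{th:cont_stab}.
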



We now turn to the semidiscrete system.
The second result is about the convergence of space discretizations of \eqref{eq:SKT} to a solution of \eqref{eq:SKT}. More precisely, let us introduce a semi-discrete model, first proposed in \cite{daus}. On the discrete torus of dimension 1 $\T_M$, consider the following Cauchy problem:
\begin{equation} \label{eq:discSKT}
\left\{\begin{array}{ll}
 \frac \dd {\dd t} \mathbf u^M = \Delta_M( \mu_1( \mathbf v^M) \mathbf u^M ) + R_1(\mathbf u^M,\mathbf v^M)\\
 \frac \dd {\dd t} \mathbf v^M = \Delta_M( \mu_2( \mathbf u^M) \mathbf v^M ) + R_2(\mathbf u^M ,\mathbf v^M)\\
 (\mathbf u^M(0) , \mathbf v^M(0)) = (\mathbf u^M_0 , \mathbf v^M_0)
\end{array} \right. ,
\end{equation}
where $\mathbf u^M, \mathbf v^M \in \mathcal C ^1( \R_+, \ell^2(\T_M))$ and $\mathbf u^M_0 , \mathbf v^M_0 \in \ell^2(\T_M)$, which is finite dimensional. The study of the stability of this system and its pertubations will a key step between the continuous limiting system (Theorem \ref{th:cont_stab}) and the convergence of the sequence of stochastic processes (Theorem \ref{th:stoc_cv}).
 Throughout the semi-discrete part in Section \ref{sec:2}, we will make the following assumptions:
\begin{Hyp} \label{hyp}
\begin{enumerate}[label=(\roman*)]
    \item We work in dimension $d=1$.
    \item For $i\in\{1,2\}$, functions $\mu_i$ are of class $\mathcal C^3$ and globally Lipschitz and lower-bounded by an $\alpha_i >0$. We denote $L_i$ the Lipschitz constant of $\mu_i$.
    \item For $i\in\{1,2\}$, functions $R_i$    have the structure $R_1(u,v) = u \lambda_1(u,v)$, $R_2(u,v) = v \lambda_2(u,v)$ with $\lambda_1, \lambda_2$ globally Lipschitz and there exists $\rho_0\in \mathbb R$ such that  $$\lambda_1(u,v)+\lambda_2(u,v) \le \rho_0, \quad \lambda_1^-(u,v)^2 + \lambda_2^-(u,v)^2 \lesssim 1+  u(1+ \lambda_1^-(u,v)) + v (1+ \lambda_2^-(u,v)), $$
    where $\lambda_i^-$ is the negative part of $\lambda_i$.
\end{enumerate}
\end{Hyp}
Working in dimension $d=1$ will be technically convenient and particularly useful when comparing the semi-discrete system and the limiting PDE. As commented in the next section, extensions  are expected in higher dimension and even for more general space structures.
\begin{Hyp1} \label{hyp:existSol}
For some $T>0$, there exists a solution $(u,v) \in \mathcal C^0([0,T]; H^2(\T)) \cap L^2([0,T]; H^3(\T))$ of \eqref{eq:SKT} satisfying the smallness condition 
\begin{equation} \label{eq:petitesse2}
\norm u _{L^\infty(\QT)} \norm v _{L^\infty(\QT)} < \frac {\alpha_1 \alpha_2} {\norm { \mu_1'} _{L^ \infty(\R_+)}\norm {\mu_2'} _{L^\infty(\R_+)}}. 
\end{equation} 
\end{Hyp1}
        We note that the solution in Hypothesis \ref{hyp:existSol}
  is unique by Corollary \ref{cor:1} and that the smallness condition makes sense in the class of functions considered since $(u,v) $ is actually bounded thanks to the Sobolev embedding $H^2(\T) \hookrightarrow L^\infty(\T)$.
  Moreover this condition is non-empty and solutions indeed exist. More precisely, in dimension $d$ for $s>d/2$ the Cauchy problem for the SKT is well-posed in the $H^s(\T)$ with local existence (for non-negative initial data) of a non-negative $\mathscr{C}^0([0,T];H^s(\T^d))\cap L^2(0,T;H^{s+1}(\T^d))$ solution \cite{galmou}. 
    Under a smallness assumption on the initial data, this solution is global. We refer to \cite[Theorems 1,2,3]{galmou} for details. \\

We wish to prove that $(\mathbf u^M, \mathbf v^M)$ converges to $(u,v)$. Therefore, we compare $u$ (ou $v$) to the piecewise affine interpolation $\pi_M(\mathbf u^M) \in L^2(\T)$ of $\mathbf u^M$. The second main result quantifies this discretization error:

\begin{Thm}\label{th:disc_stab}
Under  Hypotheses \ref{hyp} and \ref{hyp:existSol},  we 
also assume that
the initial data $\mathbf u_0^M$ and $\mathbf v_0^M$ are non negative and that
 $$  \sup_{M \in \mathbb N} \left( \norm{\mathbf u_0^M}_{2,M}+ \norm{\mathbf v_0^M}_{2,M} \right) < \infty.$$     Then, for any $M\in \mathbb N$, 
$$|||\pi_M(\mathbf u^M) - u |||_T^2+ |||\pi_M(\mathbf v^M) - v |||_T^2 \lesssim  e^{\exp(cT)}(\norm{\pi_M(\mathbf u^M_0) - u_0 }_{H^{-1}(\T)}^2+ \norm{\pi_M(\mathbf v^M_0) - v_0 }_{H^{-1}(\T)}^2 + \delta_M ), $$ 
where $\delta_M\rightarrow 0$ as $M\rightarrow \infty$  and $c$ is a constant. Moreover $(\delta_M)_M$, $c$ and the constant behind $\lesssim$ depend only on $ \sup_{M \in \mathbb N} \left( \norm{\mathbf u_0^M}_{2,M}+ \norm{\mathbf v_0^M}_{2,M} \right)$, the solution $(u,v)$ of \eqref{eq:SKT} and the parameters $\mu_i, R_i$ of the problem. 
\end{Thm}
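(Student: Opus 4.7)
The strategy mimics the duality approach of Theorem \ref{th:cont_stab}, comparing the semi-discrete solution $(\mathbf u^M, \mathbf v^M)$ to the grid-restriction $(\widehat u^M, \widehat v^M):=(u_{|\T_M}, v_{|\T_M})$ of the continuous solution provided by Hypothesis \ref{hyp:existSol}. First I would reduce the claim to a bound on $\mathbf w := \mathbf u^M - \widehat u^M$, $\mathbf z := \mathbf v^M - \widehat v^M$ in the discrete norm $|||\cdot|||_{T,M}$. Writing $\pi_M(\mathbf u^M) - u = \pi_M(\mathbf w) + (\pi_M(\widehat u^M) - u)$, the second piece is a pure interpolation error of order $M^{-2}$ in $L^\infty_t H^{-1}_x \cap L^2_t L^2_x$ thanks to $u \in \mathcal C^0([0,T];H^2(\T))$, while the first is comparable to $|||\mathbf w|||_{T,M}$ up to similar small error; both defects are absorbed in the $\delta_M$ of the statement.

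A direct computation shows that $(\widehat u^M, \widehat v^M)$ satisfies \eqref{eq:discSKT} up to a consistency defect $(\varepsilon_1^M, \varepsilon_2^M)$ gathering the discrete-versus-continuous Laplacian error $\Delta_M(\widehat{\mu_1(v)u}) - \widehat{\Delta(\mu_1(v)u)}$, the restriction-nonlinearity commutator $\widehat{\mu_i(\cdot)\cdot} - \mu_i(\widehat{\,\cdot\,})\widehat{\,\cdot\,}$, and an analogous reaction defect. Standard finite-difference estimates combined with the $\mathcal C^3$ regularity of $\mu_i$, the Lipschitz regularity of $\lambda_i$, and the parabolic regularity $(u,v) \in \mathcal C^0([0,T];H^2) \cap L^2([0,T];H^3)$ yield $\int_0^T\|\varepsilon_i^M\|_{-1,M}^2\,\dd t \to 0$ as $M\to\infty$, a contribution absorbed in $\delta_M$. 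Subtracting the two systems and applying the mean-value theorem pointwise, $(\mathbf w, \mathbf z)$ solves a perturbed linear discrete system
\begin{align*}
\partial_t \mathbf w &= \Delta_M\bigl(\mu_1(\mathbf v^M)\mathbf w + \mu_1'(\xi_1)\widehat u^M \mathbf z\bigr) + r_1(\mathbf w,\mathbf z) - \varepsilon_1^M,\\
\partial_t \mathbf z &= \Delta_M\bigl(\mu_2(\mathbf u^M)\mathbf z + \mu_2'(\xi_2)\widehat v^M \mathbf w\bigr) + r_2(\mathbf w,\mathbf z) - \varepsilon_2^M,
\end{align*}
where $r_i(\mathbf w, \mathbf z)$ collects the reaction differences, pointwise controlled by $|\mathbf w|+|\mathbf z|$ on any set where $(\mathbf u^M, \mathbf v^M)$ is bounded. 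This leading linearization is the same as the one driving the proof of Theorem \ref{th:cont_stab}.

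Testing the $\mathbf w$-equation against $-\Delta_M^{-1}(\mathbf w - [\mathbf w]_M)$ and the $\mathbf z$-equation against $-\Delta_M^{-1}(\mathbf z - [\mathbf z]_M)$, plus standard $L^2$ pairings to handle the means, the principal diffusion produces $\tfrac12\partial_t\norm{\mathbf w}_{-1,M}^2 + \int \mu_1(\mathbf v^M)\mathbf w^2$ and its analogue for $\mathbf z$, the cross terms produce the bilinear couplings $\int \mu_i'(\xi_i)\widehat u^M \mathbf w \mathbf z$, the reaction terms give lower-order $L^2$ contributions, and the consistency defects enter via Cauchy--Schwarz. Summing, the smallness condition \eqref{eq:petitesse2} on $(u,v)$---available since $u,v\in L^\infty$ via $H^2\hookrightarrow L^\infty$---is precisely what makes the resulting $2\times 2$ quadratic form coercive, absorbing the cross terms into the diffusion; Grönwall then closes the estimate on $|||\mathbf w|||_{T,M}^2 + |||\mathbf z|||_{T,M}^2$. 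The main obstacle is precisely that the smallness and the $L^\infty$ bound are assumed only on $(u,v)$, while $\mu_2(\mathbf u^M)$ appears as the leading coefficient of the $\mathbf z$-equation and $R_i$ is only locally Lipschitz. I would handle this by a bootstrap on a stopping time $\tau_M$ at which $\norm{\mathbf u^M}_\infty + \norm{\mathbf v^M}_\infty$ first exceeds $2(\norm u_{L^\infty(\QT)} + \norm v_{L^\infty(\QT)})$: on $[0,\tau_M\wedge T]$ the Lipschitz constants of $\mu_i, R_i$ are uniformly controlled, and the resulting smallness of $(\mathbf w, \mathbf z)$, together with a one-dimensional discrete Sobolev embedding (available thanks to Hypothesis \ref{hyp}(i)) and the non-negativity/$L^2$ bounds provided by Hypothesis \ref{hyp}(iii) and the uniform initial control, force $\tau_M \ge T$ for $M$ large. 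The nested Grönwall/bootstrap structure accounts for the double-exponential factor $e^{\exp(cT)}$ in the statement.
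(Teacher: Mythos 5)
Your high-level outline agrees with the paper on several points: you compare $(\mathbf u^M, \mathbf v^M)$ to the grid restriction $(\widehat u^M, \widehat v^M)$, you identify the consistency defect (the paper's $\mathbf r^M$ from Proposition~\ref{prop:rToZero}), you split off the interpolation error $\pi_M(\widehat u^M) - u$ (the paper's Proposition~\ref{prop:interpol}), and you test against $-\Delta_M^{-1}$ of the mean-free difference (the paper's discrete duality, Lemma~\ref{lem:discont}). The decomposition $\mu_1(\mathbf v^M)\mathbf w + \mu_1'(\xi)\widehat u^M\mathbf z$ you write is also essentially the paper's $\mu\mathbf z + f$ with $\mu=\mu_1(\mathbf v^M)$ and $f=\widehat u(\mu_1(\mathbf v^M)-\mu_1(\widehat v^M))$, and your observation that the smallness condition is what makes the $2\times 2$ quadratic form coercive is correct.

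However, the ``main obstacle'' you identify is partly illusory and partly mishandled, and the bootstrap you propose to close it is not what the paper does and does not obviously go through. First, the hypotheses already give $\mu_i$ \emph{globally} Lipschitz (Hypothesis~\ref{hyp}(ii)), so no uniform $L^\infty$ bound on $\mathbf u^M,\mathbf v^M$ is required to control the $\mu_2(\mathbf u^M)$ coefficient: in the duality estimate this coefficient enters only through its lower bound $\alpha_2>0$, while the cross/off-diagonal term $f=\widehat u\,(\mu_1(\mathbf v^M)-\mu_1(\widehat v^M))$ is bounded using $\|u\|_{L^\infty(Q_T)}$ and the global Lipschitz constant of $\mu_1$ — precisely the quantities appearing in the smallness condition \eqref{eq:petitesse2}, both controlled under Hypothesis~\ref{hyp:existSol}. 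Your stated need for $(\mathbf u^M,\mathbf v^M)$ to remain near $(u,v)$ in $L^\infty$ is therefore not the bottleneck here. Second, the genuine difficulty — which you touch on through ``$R_i$ is only locally Lipschitz'' — is that $R_i(u,v)=u\lambda_i(u,v)$ with $\lambda_i$ globally Lipschitz is only \emph{quadratically} bounded, so its contribution produces a Gr\"onwall weight involving $\|\lambda_i(\mathbf u^M,\mathbf v^M)\|_{2,M}^2$ and hence the sizes of $\mathbf u^M,\mathbf v^M$. The paper resolves this not by a stopping-time bootstrap on $\|\mathbf u^M\|_\infty$, but by first proving positivity and an a priori \emph{mass} bound $\sup_{t\le T}\|\mathbf u^M(t)\|_{1,M}\lesssim e^{\rho_0 T}\|\mathbf u^M_0\|_{1,M}$ (Proposition~\ref{prop:U1source}), together with the structural cancellation in Hypothesis~\ref{hyp}(iii) that lets $\int_{Q_{T,M}}(\lambda_1^-)^2+(\lambda_2^-)^2$ be absorbed by $\int\mathbf u^M\lambda_1^-+\mathbf v^M\lambda_2^-$. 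This feeds $\int_0^T a(s)\,\dd s\lesssim e^{cT}$ into the final Gr\"onwall and produces the double exponential. By contrast, your stopping-time argument on $\|\mathbf u^M\|_\infty+\|\mathbf v^M\|_\infty$ cannot be closed by the a priori estimate you invoke: the quantity $|||\mathbf w|||_{T,M}$ controls $\|\mathbf w\|_{-1,M}$ uniformly in time and $\|\mathbf w\|_{2,M}$ only in $L^2_t$, neither of which dominates $\sup_t\|\mathbf w(t)\|_\infty$ even in $d=1$, so smallness of the error in this topology does not by itself keep $\tau_M\ge T$. You would need an independent discrete $L^\infty$ bound (which the paper does not prove and does not need), or to replace the stopping criterion by the $\ell^1$ mass as the paper does.
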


\begin{Cor}
Under the assumptions of Theorem \ref{th:disc_stab}, if moreover 
$$\norm{\pi_M(\mathbf u^M_0) - u_0 }_{H^{-1}(\T)}^2+ \norm{\pi_M(\mathbf v^M_0) - v_0 }_{H^{-1}(\T)}^2 \underset{M\to\infty}{\longrightarrow} 0,$$
then $(\pi_M(\mathbf u^M), \pi_M(\mathbf v^M) ) \underset{M\to\infty}{\longrightarrow} (u,v)$ for the $|||\cdot |||_T$ norm.  
\end{Cor}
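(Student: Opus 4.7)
The corollary is an essentially immediate consequence of Theorem \ref{th:disc_stab}, so the plan is short. I would just invoke Theorem \ref{th:disc_stab} and read off the conclusion by passing to the limit $M\to\infty$ in its right-hand side.

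More precisely, the plan is as follows. Fix $T>0$ and observe that, since $T$ is independent of $M$, the prefactor $e^{\exp(cT)}$ appearing in Theorem \ref{th:disc_stab} is a finite constant independent of $M$. Theorem \ref{th:disc_stab} then gives
\[
|||\pi_M(\mathbf u^M) - u |||_T^2+ |||\pi_M(\mathbf v^M) - v |||_T^2 \lesssim \norm{\pi_M(\mathbf u^M_0) - u_0 }_{H^{-1}(\T)}^2+ \norm{\pi_M(\mathbf v^M_0) - v_0 }_{H^{-1}(\T)}^2 + \delta_M,
\]
where $\delta_M\to 0$ as $M\to\infty$ and the implicit multiplicative constant is independent of $M$.

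Next I would use the assumed convergence of the initial data: by hypothesis, $\norm{\pi_M(\mathbf u^M_0) - u_0 }_{H^{-1}(\T)}^2+ \norm{\pi_M(\mathbf v^M_0) - v_0 }_{H^{-1}(\T)}^2 \to 0$ as $M\to\infty$. Combined with $\delta_M\to 0$, this forces the right-hand side above to vanish in the limit $M\to\infty$, so that
\[
|||\pi_M(\mathbf u^M) - u |||_T^2+ |||\pi_M(\mathbf v^M) - v |||_T^2 \xrightarrow[M\to\infty]{} 0,
\]
which is exactly the claimed convergence in the $|||\cdot|||_T$ norm.

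There is essentially no obstacle here: all the analytic work (stability estimate, discretization error, treatment of the interpolation operator $\pi_M$, and closing under the smallness condition \eqref{eq:petitesse2}) has already been carried out in Theorem \ref{th:disc_stab}. The only mild point to double-check is that the constants and the sequence $(\delta_M)_M$ provided by the theorem do not depend on $M$ in a way that would spoil the limit; but the statement of Theorem \ref{th:disc_stab} already guarantees that $c$, $(\delta_M)_M$ and the implicit constant depend only on $(u,v)$, $\mu_i$, $R_i$, and the uniform $\ell^2$ bound on the initial data, all of which are fixed or uniform in $M$ by hypothesis.
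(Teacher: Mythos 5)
Your proposal is correct and is exactly the intended (and only reasonable) argument: the corollary is an immediate consequence of Theorem \ref{th:disc_stab}, obtained by noting that for fixed $T$ the prefactor is a constant, the sequence $(\delta_M)_M$ tends to zero, and the initial-data error tends to zero by hypothesis. The paper does not spell out a separate proof precisely because the argument is this short, so there is nothing to compare beyond agreement.
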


\subsubsection{Convergence of the stochastic particle systems}

First, let us define the stochastic process $(\mathbf U^{M,N}(t) , \mathbf V^{M,N}(t))_{t\geq 0}$ measuring the population densities  along time. The number of sites is still $M\in \mathbb N$.  We also use a scaling of local population size by factor $N\in \mathbb N$.
 Thus, $N U^{M,N}_j(t)$ (resp. $N V^{M,N}_j(t)$) is the number of individuals of the first species  (resp. second species) on site $x_j = j/M$ at time $t$. \\ 
 
 Informally, each individual of species $1$  may move from  site $x_j$ to its left neighbor $x_j-1/M$ and to its right neighbor $x_j+1/M$ with the same individual rate 
 $$M^2\mu_1({v}_j)$$
 where $v_j\in \mathbb R_+$ is the density of individuals of the other species on the same site and $\mu_1$ satisfies the regularity condition of Hypothesis \ref{hyp} $i)$. The random walk associated with each individual is thus centered and scaling $M^2$ is the classical diffusive scaling. \\
 Each individual of species $1$ in each site $j$ may also give birth  on the same site or die with respective individual birth and death rate
 $$b_1({u}_j, {v}_j), \qquad d_1({ u}_j, {v}_j),$$
 where $(u_j,v_j)\in \mathbb R_+^2$ are the local densities of each species.
 Each individual of species $2$ behaves similarly with respective individual rates $M^2\mu_2({u}_j), \,  b_2({u}_j, {v}_j)$ and  $  d_2({u}_j, {v}_j).$ \\
 
 More precisely,
 the transition rates $\nu$ of the process $(\mathbf U^{M,N}(t) , \mathbf V^{M,N}(t))_{t\geq 0}$ are defined as follows for $\epsilon\in \{-1,1\}$ and $1\leq j\leq M$: 
 \begin{align*} 
 ({\bf u}, {\bf v})\,  \longrightarrow \,   & ({\bf u }+ \frac 1 N \mathbf e_{j+\epsilon} - \frac 1 N \mathbf e_j , {\bf v}) \quad \text{at rate} \quad  \nu_1^{M,N}({\bf u}, {\bf v},  \mathbf e_{j+\epsilon} -  \mathbf e_j) :=NM^2u_j\mu_1(v_j)\\
 & ({\bf u }, {\bf v}+ \frac 1 N \mathbf e_{j+\epsilon} - \frac 1 N \mathbf e_j ) \quad \text{at rate} \quad  \nu_2^{M,N}({\bf u}, {\bf v},  \mathbf e_{j+\epsilon} -  \mathbf e_j) :=NM^2v_j\mu_1(u_j)\\
& ({\bf u} + \frac 1 N \mathbf e_{j}, {\bf v})  \quad \text{at rate} \quad \nu_1^{M,N}({\bf u}, {\bf v},  \mathbf e_{j} ) :=Nu_jb_1(u_j,v_j)\\
& ({\bf u}, {\bf v} + \frac 1 N \mathbf e_{j})  \quad \text{at rate} \quad \nu_2^{M,N}({\bf u}, {\bf v},  \mathbf e_{j} ) :=Nv_jb_2(u_j,v_j)\\
& ({\bf u} , {\bf v}- \frac 1 N \mathbf e_{j})   \quad \text{at rate} \quad \nu_2^{M,N}({\bf u}, {\bf v}, - \mathbf e_{j} ) :=Nv_jd_2(u_j,v_j),\\
& ({\bf u} - \frac 1 N \mathbf e_{j}, {\bf v})   \quad \text{at rate} \quad \nu_1^{M,N}({\bf u}, {\bf v}, - \mathbf e_{j} ) :=Nu_jd_1(u_j,v_j).
\end{align*}
We assume that  functions $\mu_i$ and
 $$\lambda_i(u,v) := b_i(u,v) - d_i(u,v)$$ satisfy Hypotheses \ref{hyp} for $i\in\{1,2\}$. 
We give now a trajectorial representation of the process using Poisson point processes.
For convenience, we denote by 
$$\mathcal T = \bigcup_{j=1}^M  \{\mathbf e_j,  -\mathbf e_j, \mathbf e_{j + 1} -\mathbf e_j, \mathbf e_{j - 1} -\mathbf e_j\}$$
the set of the different types of transition, respectively births, deaths, motions to the right and to the left.
Let $\N_1$, $\N_2$ two independent  Poisson random measures on $\R_+ \times \R_+ \times \mathcal T$of intensity $\dd t \otimes \dd \rho \otimes \textnormal{Card}$ where $\textnormal{Card}$ is the counting measure on $\mathcal T$. We consider finite  initial population densities $(\mathbf U^{M,N}(0), \mathbf V^{M,N}(0))$. Then, the process $(\mathbf U^{M,N}, \mathbf V^{M,N})$ 
is the unique strong solution in $\mathbb D(\R^+, \R^{2M}_+)$ of the following SDE
\begin{align} \label{eq:defPS}
    \mathbf U^{M,N} (t) &= \mathbf U^{M,N}_0 + \frac 1 N\int_0^t \int_{\mathbb R_+ \times \mathcal T} 
  \1_{\rho \le \nu_1^{M,N}(\mathbf U^{M,N} (s^-), \mathbf V^{M,N} (s^-), \theta)} \theta    
    \, \N_1( \dd s, \dd \rho, \dd \theta)  \\
    \label{eq:defPS2}
    \mathbf V^{M,N} (t) &= \mathbf V^{M,N}_0 + \frac 1 N\int_0^t \int_{\mathbb R_+ \times \mathcal T} \1_{\rho \le \nu_2^{M,N}(\mathbf U^{M,N} (s^-), \mathbf V^{M,N} (s^-), \theta)} \theta
    \, \N_2( \dd s, \dd \rho, \dd \theta).
\end{align}
We construct rigorously $(\mathbf U^{M,N}, \mathbf V^{M,N})$  and justify this result in Subsection \ref{sec:wp_stoc} using classical localisation arguments and moments estimates on the total population sizes. This result is obtained under Hypothesis \ref{hyp} for coefficients and $L^1$ moment condition for the initial value.  This covers our framework, see forthcoming \eqref{momunM}.   We will observe that construction and pathwise uniqueness, moment conditions can be relaxed by a truncation procedure. Actually, for the proof of the  convergence  we  need higher  moment  assumptions than  bounded  first moments and will assume :
\begin{Hyp} \label{hyp:bd}
\begin{enumerate}
\item[i)] There exist  $p_0>3$ and $K_0>0$ such that for all $K \ge K_0$ and $ M,N \geq 1$,
\begin{equation*} 
 \P\left([\mathbf U^{M,N}_0 ]_M+ [\mathbf V^{M,N}_0]_M \geq K\right) \lesssim \frac{1}{(M K)^{p_0}}.
\end{equation*} 
    \item[ii)] For $i\in \{1,2\}$,  functions $b_i,d_i$ are globally Lipschitz  and there exists $ \rho_0 \ge 0$ and $\alpha < 1 $ such that for any $(u,v)\in \mathbb R_+^2$, $$b_i(u,v) \le \rho_0 + \alpha d_i(u,v), \qquad d_1(u,v)^2 + d_2(u,v)^2 \lesssim 1+  u(1+ d_1(u,v)) + v (1+ d_2(u,v)).$$
\end{enumerate}
\end{Hyp}
Hypothesis \ref{hyp:bd} $i)$ concerns the tail distribution of initial conditions and implies that the $p$-order moments of $[\mathbf U^{M,N}_0 ]_M$ and $[\mathbf V^{M,N}_0 ]_M$ are bounded uniformly in $M,N$ for all $p<p_0$. Let us give two simple and relevant examples, for which the hypothesis is satisfied for any $p_0>3$. This holds  when the initial population densities 
$\mathbf U^{M,N}_0(x_j)$ and $\mathbf V^{M,N}_0(x_j)$ are deterministic and equal to $u_0^{N}(x_j)$ and $v_0^N(x_j)$, where $u_0^N,v_0^N$ are bounded.  The tail condition is also   satisfied when the initial population sizes $N\mathbf U^{M,N}_0(x_j)$ and $N\mathbf V^{M,N}_0(x_j)$ have a Poisson distribution of respective parameters $Nu_0^{N}(x_j)$ and $Nv_0^N(x_j)$, where $u_0^N,v_0^N$ are (still) bounded.\\
Hypothesis \ref{hyp:bd} $ii)$ concerns birth and death rates. It is satisfied for classical reactions terms of the form $$b_1(u,v)=b_1, \quad b_2(u,v)=b_2, \quad  d_1(u,v)=d_1+c_{11}u+c_{12}v,\quad d_2(u,v)=d_2+c_{21}u+c_{22}v,$$ where  $d_i,c_{i,j}$ ($i,j\in \{1,2\})$ are non negative coefficients. This contains the  competitive Lotka Volterra model (logistic reaction, when $c_{ij}>0)$) and  branching processes (linear reaction term, when $c_{ij}=0)$) and the conservative case (no reaction  term, when all coefficients are zero).
It also includes the following  family of birth and death rates : $d_1(u,v) \sim u^\alpha + v^\beta$, $d_2(u,v) \sim u^\gamma + v^\delta$ with $\alpha, \delta \ge 1$, $2\beta \le \delta + 1$ and $2\gamma \le \alpha +1$.
\begin{Thm} \label{th:stoc_cv}
Assume that
 Hypotheses \ref{hyp}, \ref{hyp:existSol} and \ref{hyp:bd} are satisfied.
Then,  there exists a sequence $(\delta_M)_M$ going to $0$ as $M\rightarrow$ and a  constant $c>0$ such that
for any  $N,M\geq 1$, 
\begin{align*}
    &\E \left[|||\pi_M(\mathbf U^{M,N}) - u|||_T^2 +  |||\pi_M(\mathbf V^{M,N}) -v |||_T^2 \right]  \lesssim e^{\exp{(cT)}}\left(D_{0}^{M,N} +\delta_M 
    + \frac {T}{\sqrt{N}} \right),
\end{align*} 
where
\begin{align}
    \label{condinit}
D_{0}^{M,N}&=
\E\left[\norm{\pi_M(\mathbf U^{M,N}_0) - u_0 }_{H^{-1}(\T)}^2+ \norm{\pi_M(\mathbf V^{M,N}_0) - v_0 }_{H^{-1}(\T)}^2\right].
\end{align}
\end{Thm}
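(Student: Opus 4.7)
The overall plan is to interpose the deterministic semi-discrete solution and combine Theorem~\ref{th:disc_stab} with a quantitative stochastic comparison between the particle system and the semi-discrete system. Let $(\mathbf u^M, \mathbf v^M)$ denote the (random) solution of \eqref{eq:discSKT} with initial condition $(\mathbf U^{M,N}_0, \mathbf V^{M,N}_0)$; conditionally on the initial data it is deterministic. By the triangle inequality,
\begin{equation*}
|||\pi_M(\mathbf U^{M,N}) - u|||_T^2 \le 2|||\pi_M(\mathbf U^{M,N}) - \pi_M(\mathbf u^M)|||_T^2 + 2|||\pi_M(\mathbf u^M) - u|||_T^2,
\end{equation*}
and analogously for the $\mathbf V$-component. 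Taking expectations, Theorem~\ref{th:disc_stab} applied conditionally on $(\mathbf U^{M,N}_0, \mathbf V^{M,N}_0)$ produces the $e^{\exp(cT)}(D^{M,N}_0 + \delta_M)$ contribution; integrability of the random constants appearing in that stability estimate is ensured by the tail assumption $p_0>3$ of Hypothesis~\ref{hyp:bd}(i).

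The stochastic comparison is where the real work lives. Using the Poisson representation \eqref{eq:defPS}--\eqref{eq:defPS2} and It\^o compensation, I would write
\begin{equation*}
\mathbf U^{M,N}(t) = \mathbf U^{M,N}_0 + \int_0^t \Delta_M(\mu_1(\mathbf V^{M,N})\mathbf U^{M,N})\,\dd s + \int_0^t R_1(\mathbf U^{M,N}, \mathbf V^{M,N})\,\dd s + \mathbf M_1^{M,N}(t),
\end{equation*}
and analogously for $\mathbf V^{M,N}$, with $\mathbf M_1^{M,N}, \mathbf M_2^{M,N}$ two $\ell^2(\T_M)$-valued c\`adl\`ag martingales. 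The differences $\mathbf W_1 := \mathbf U^{M,N} - \mathbf u^M$ and $\mathbf W_2 := \mathbf V^{M,N} - \mathbf v^M$ thus solve the semi-discrete SKT system linearised around $(\mathbf u^M, \mathbf v^M)$, perturbed by the martingale source terms $\mathbf M_i^{M,N}$. Applying the semi-discrete stability estimate of Section~\ref{sec:2} in a version that accommodates $L^2_t H^{-1}_M$ source perturbations (by the same duality argument used in Theorem~\ref{th:disc_stab}) yields, on the stopping-time event $\mathcal G_K := \{\sup_{t\le T}([\mathbf U^{M,N}]_M + [\mathbf V^{M,N}]_M)(t) \le K\}$,
\begin{equation*}
\bigl(|||\mathbf W_1|||_{T,M}^2 + |||\mathbf W_2|||_{T,M}^2\bigr)\1_{\mathcal G_K} \lesssim e^{c(K)T}\Bigl(\sup_{t\le T}\norm{\mathbf M_1^{M,N}(t)}_{-1,M}^2 + \sup_{t\le T}\norm{\mathbf M_2^{M,N}(t)}_{-1,M}^2\Bigr),
\end{equation*}
while the complementary contribution is controlled by $\P(\mathcal G_K^c)$ times $L^{p}$-moments of $\mathbf W_i$, which decay polynomially in $K$ by Hypothesis~\ref{hyp:bd}(i).

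It remains to bound $\E[\sup_{t\le T}\norm{\mathbf M_i^{M,N}(t)}_{-1,M}^2]$. Splitting each $\mathbf M_i^{M,N}$ into a motion part and a birth/death part and invoking Doob's $L^2$ inequality reduces everything to bracketing. In the spirit of Blount~\cite{Blount1,Blount2}, the motion jumps $\tfrac1N(\mathbf e_{j+\varepsilon} - \mathbf e_j)$ carry a discrete gradient, so their $\norm{\cdot}_{-1,M}^2$-cost absorbs the acceleration $M^2$ of their rate and produces a $\mathcal O(1/N)$ contribution per unit time, exactly the refinement that removes the former $M^2/N$ scaling constraint. The birth/death jumps $\pm \mathbf e_j/N$ have $\norm{\cdot}_{-1,M}^2$-cost of order $1/N^2$, and their bracket reduces to the cumulative rate $\int_0^t N[\mathbf U^{M,N}(b_1+d_1)(\mathbf U^{M,N},\mathbf V^{M,N})]_M\,\dd s$; this is where Section~\ref{sec:nJumps} intervenes, controlling the cumulative rate uniformly in $M,N$ through time-change and large-deviation estimates on jump Markov processes, together with the sublinearity built into Hypothesis~\ref{hyp:bd}(ii).

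The main obstacle will be this last step: the birth/death bracket is a nonlinear functional of the trajectory, and must be tamed both on the good event $\mathcal G_K$ (using Hypothesis~\ref{hyp:bd}(ii) to dominate it by the total mass) and globally on $\mathcal G_K^c$ (via the number-of-jumps estimates of Section~\ref{sec:nJumps}). The final rate $T/\sqrt{N}$, rather than the $T/N$ one would naively read off the brackets, emerges from a Cauchy--Schwarz step needed to decouple the random stability prefactor $e^{c(K)T}$ from the martingale brackets, after which Gronwall closes the estimate and the truncation level $K = K(N) \to \infty$ is tuned to balance the $\mathcal G_K$ and $\mathcal G_K^c$ contributions.
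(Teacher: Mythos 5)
Your plan captures several of the paper's key ingredients: the duality-based stability estimate, the $H^{-1}$ control of martingale fluctuations with the motion jumps' $H^{-1}$ cost absorbing the $M^2$ acceleration (this is indeed the Blount-style refinement removing the $N\gg M^2$ constraint), and the large-deviation control of the cumulative birth/death intensity from Section~\ref{sec:nJumps}. However, the triangulation you propose — interposing the semi-discrete solution $(\mathbf u^M, \mathbf v^M)$ and bounding $|||\pi_M(\mathbf U^{M,N}) - \pi_M(\mathbf u^M)|||_T$ and $|||\pi_M(\mathbf u^M) - u|||_T$ separately — is not the paper's route, and it opens a real gap. Proposition~\ref{prop:disc_stab} compares any non-negative solution of the perturbed semi-discrete system \eqref{eq:discSKTsource} \emph{directly} to $\widehat u^M$, the restriction of the PDE solution; the duality argument there needs the smallness condition \eqref{eq:petitesse2}, which is a bound on $\norm{u}_{L^\infty(\QT)}\norm{v}_{L^\infty(\QT)}$ for the \emph{reference} pair, available by Hypothesis~\ref{hyp:existSol} and the embedding $H^2(\T)\hookrightarrow L^\infty$. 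If instead you compare $\mathbf U^{M,N}$ to $\mathbf u^M$, the analogous duality estimate demands a uniform-in-$M$ $L^\infty$ bound on one of the two, and none is available: Proposition~\ref{prop:U1source} only yields $L^1$ control of $(\mathbf u^M,\mathbf v^M)$, Theorem~\ref{th:disc_stab} gives convergence in $|||\cdot|||_T$ but not $L^\infty$, and the particle system is unbounded. The paper sidesteps this entirely by setting $\mathbf Z^{M,N} = \mathbf U^{M,N} - \widehat u^M$ and treating the stochastic process itself as the perturbed solution in Proposition~\ref{prop:disc_stab}, with the spatial discretization error $\mathbf r^M$ from Proposition~\ref{prop:rToZero} already incorporated into $\delta_M$. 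Your extra triangulation step would also require taking expectation of Theorem~\ref{th:disc_stab} over the random initial data, and the constant there depends on $\sup_M\norm{\mathbf u_0^M}_{2,M}$ which is a random $\ell^2$-quantity not controlled by Hypothesis~\ref{hyp:bd}(i) alone — another issue the direct route avoids.

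A second, smaller point concerns the mechanism of the $T/\sqrt{N}$ rate. It is not produced by Cauchy--Schwarz decoupling the stability prefactor from the brackets. The quantity $H_i^{M,N}$ decomposes as $2\mathcal Q_i^{M,N} + \mathcal P_i^{M,N}$ (see \eqref{eq:defEstoc}), where $\mathcal P_i$, the sum of squared $H^{-1}$ jumps of $\mathcal M_i$, is already $O(1/N)$; the rate is lost on the cross-term martingale $\mathcal Q_i$, whose predictable bracket is $\lesssim (1+T)/N$ by \eqref{eq:Jbound2} and Cauchy--Schwarz inside the bracket, so that Burkholder--Davis--Gundy gives $\E[\sup_t|\mathcal Q_i|]\lesssim \sqrt{(1+T)/N}$. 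The decoupling of $e^{\Lambda^M(T)}$ from the martingale term is handled instead by splitting on $\{\Lambda^M(T)\ge C\exp(cT)\}$, whose probability is $\lesssim M^{-p_0}$ by Corollary~\ref{prop:HPbounds} and Hypothesis~\ref{hyp:bd}(i), and applying H\"older on that bad event.
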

This result is stated here for the second moment of our norm $\norm{ \, . \,}$. It   will be  proved more generally for  moments $p\geq 1$. The constant $c$  and the constant behind $\lesssim$
in this estimate do not depend on $T$ and depend on the initial conditions $(\mathbf U^{M,N}_0,\mathbf V^{M,N}_0)$ only through the values of $K_0,\gamma_0$ in Hypothesis \eqref{hyp:bd} $i)$.
We immediately derive from this estimate the following convergence of the stochastic model to the crossed diffusion by letting $M$ and $N$ go to infinity.
\begin{Cor} \label{cor:stoc_cv}
  Assume that
 Hypotheses \ref{hyp}, \ref{hyp:existSol} and \ref{hyp:bd} are satisfied and that 
$D_{0}^{M,N}$ defined in \eqref{condinit} goes to {zero} as $(M,N)$ goes to $(\infty,\infty)$. Then 
Then the following convergence holds:
$$\E\left[ |||\pi_M(\mathbf U^{M,N(M)}) - u|||_T^2 +  |||\pi_M(\mathbf V^{M,N(M)}) -v |||_T^2 \right] \underset{M,N\to\infty}{\longrightarrow} 0.$$
\end{Cor}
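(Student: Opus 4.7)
The plan is to deduce this convergence directly from the quantitative estimate of Theorem \ref{th:stoc_cv}: each of the three nonnegative terms on its right-hand side tends to zero, and the prefactor $e^{\exp(cT)}$ is a fixed constant for a fixed horizon $T$. No new argument is required; the corollary is merely the qualitative reformulation of a more informative bound.

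First I would fix $T>0$ and any map $M \mapsto N(M)$ with $N(M)\to\infty$ as $M\to\infty$. Since the solution $(u,v)$, the coefficients $\mu_i, R_i$, and the constants $K_0, p_0$ of Hypothesis \ref{hyp:bd}$(i)$ are all fixed, the constant $c$ and the constant hidden in $\lesssim$ in Theorem \ref{th:stoc_cv} do not depend on $(M,N)$. Applied with $N=N(M)$, the theorem yields
$$\E\left[|||\pi_M(\mathbf U^{M,N(M)}) - u|||_T^2 + |||\pi_M(\mathbf V^{M,N(M)}) -v|||_T^2\right] \lesssim e^{\exp(cT)}\left( D_{0}^{M,N(M)} + \delta_M + \frac{T}{\sqrt{N(M)}}\right).$$

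The next step is to verify that each summand inside the parentheses vanishes. The term $D_{0}^{M,N(M)}$ tends to zero by the standing assumption of the corollary, applied to the admissible sequence $(M,N(M)) \to (\infty,\infty)$. The term $\delta_M$ is the sequence furnished by Theorem \ref{th:stoc_cv} itself (inherited from the semi-discrete discretization error of Theorem \ref{th:disc_stab}) and goes to zero by construction, depending only on the fixed data of the problem. Finally $T/\sqrt{N(M)} \to 0$ since $T$ is fixed and $N(M)\to\infty$. Summing these three convergences and multiplying by the fixed prefactor $e^{\exp(cT)}$ yields the claim.

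I do not foresee any substantive obstacle: the corollary just extracts the asymptotic consequence of the quantitative bound of Theorem \ref{th:stoc_cv}, and all genuine difficulty (stability of the limiting PDE, semi-discrete approximation, control of martingale fluctuations in $H^{-1}$, large deviation estimates for the number of births and deaths) has already been absorbed into the proof of that theorem.
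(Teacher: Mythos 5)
Your proposal is correct and matches the paper's reasoning: the paper itself does not spell out a proof but simply remarks that Corollary \ref{cor:stoc_cv} is "immediately derived" from the quantitative bound of Theorem \ref{th:stoc_cv}, which is exactly what you do by sending each of the three terms $D_0^{M,N}$, $\delta_M$, and $T/\sqrt{N}$ to zero while the prefactor $e^{\exp(cT)}$ and the implicit constant stay fixed.
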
 
Let us explain  the main ingredients for this convergence and a brief outline of the proof sections. Our approach is pertubative and  relies on stability estimate of the limiting system. Roughly, the semi-martingale decomposition of the stochastic process allows to see it as a deterministic semi-discrete flow with a random source given by a  martingale, which is an integral with respect to compensated Poisson measure. The stability result obtained through  duality estimates in Section \ref{sec:2}
(see Proposition \ref{prop:disc_stab}) allows to prove strong estimates of the form
$$|||\pi_M(\mathbf U^{M,N}) - u|||_T^2 \lesssim e^{c\int_{Q_{T,M}} 1+ (\mathbf U^{M,N})^2+(\mathbf V^{M,N})^2} \, \times \,  \mathcal R^{M,N}(T) \quad \text{a.s.}$$
The process $\mathcal R^{M,N}$ goes to $0$ and quantifies the approximation. It 
 involves the control of martingales for a suitable $H^{-1}$ norm.
The term $\int  1+ (\mathbf U^{M,N})^2+(\mathbf V^{M,N})^2$ is  inherited from  births and deaths via Gr\"onwall inequality. We need fine estimates  to prove that this term is not large and does not degrade the approximation and the convergence  to $0$ coming from $\mathcal R^{M,N}$. These estimates are obtained through large deviation type techniques for multi-type birth and death processes and are
of independent interest.

\section{Stability  of the continuous and semi-discrete systems} \label{sec:2}

\subsection{Duality lemma and proof of Theorem \ref{th:cont_stab}}
To study (\ref{eq:SKT}), it will be useful to look at the following Kolmogorov equation:

\begin{equation} \label{eq:Kolmogorov}
\left\{\begin{array}{ll}
\partial_t z - \Delta( \mu z) = \Delta f + r\\
z(0, \cdot) = z_0
\end{array} \right. .
\end{equation}

To bound the solution of this equation in our context, we rely on a duality lemma. This idea goes back to the works of \cite{Martin1992, pierreSchmitt} on reaction diffusion systems. The lemma we propose to prove is to assimilate with the more recent duality lemma presented in \cite{mbh}. 
This estimate allows for a  precise treatment of $L^2$ source terms, which will be useful to bound the reaction part in \eqref{eq:SKT}.
\begin{Lemme} \label{lem:duality}
    If $\mu \in L^\infty(Q_T)$ is lower-bounded by $\alpha > 0$, then for all $f,r \in L^2(\QT)$ and $z_0 \in H^{-1}(\T^d)$, there exists a unique weak solution $z$ for \eqref{eq:Kolmogorov}. Moreover, $z \in \mathcal C ([0,T], H^{-1}(\T^d))$ and satisfies
    \begin{align*}&\norm { z(T)}_{H^{-1}(\T^d)}^2 + \int_\QT \mu z^2  \\
    &\quad \le \norm {z_0} _{H^{-1}(\T^d)}^2 + \int_0^T  [\mu(s)][z(s)]^2 \, \dd s + 2\int_0^T \norm {r(s)}_{H^{-1}(\T^d)} \norm {z(s)}_{H^{-1}(\T^d)} \dd s  + \frac 1 \alpha \norm f_{L^2(\QT)}^2 .
    \end{align*}
\end{Lemme}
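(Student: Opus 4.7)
The plan is to derive an $H^{-1}$ energy estimate by splitting $z = [z] + \tilde z$ into its spatial mean and mean-free parts and testing (\ref{eq:Kolmogorov}) against a suitable potential. Integrating the equation over $\T^d$ and using $\int \Delta g = 0$ first gives $\partial_t [z] = [r]$, hence $\frac{1}{2}\frac{\dd}{\dd t}[z]^2 = [z][r]$. The mean-free part satisfies $\partial_t \tilde z = \Delta(\mu z) + \Delta f + (r - [r])$, and I would test it against $\psi := (-\Delta)^{-1}\tilde z$ (the mean-zero potential), for which $\norm{\tilde z}_{\dot H^{-1}}^2 = \int \tilde z\,\psi$.

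After integration by parts with $\Delta \psi = -\tilde z$ and $[\psi] = 0$, this pairing yields $\frac{1}{2}\frac{\dd}{\dd t}\norm{\tilde z}_{\dot H^{-1}}^2 = -\int \mu z \tilde z - \int f\tilde z + \int r\psi$. Adding the mean evolution and recognising $\int r\psi + [z][r] = \langle r, z\rangle_{H^{-1}}$ (the decomposition of the $H^{-1}$ inner product associated with $\norm{z}_{H^{-1}}^2 = [z]^2 + \norm{\tilde z}_{\dot H^{-1}}^2$), then rewriting $\int \mu z \tilde z = \int \mu z^2 - [z]\int \mu z$, produces the key identity
\begin{equation*}
\frac{1}{2}\frac{\dd}{\dd t}\norm{z}_{H^{-1}}^2 + \int \mu z^2 \;=\; [z]\int \mu z - \int f\tilde z + \langle r, z\rangle_{H^{-1}}.
\end{equation*}
From here, $|\langle r, z\rangle_{H^{-1}}| \leq \norm{r}_{H^{-1}}\norm{z}_{H^{-1}}$ is immediate, and Cauchy--Schwarz with $\mu$-weight gives $|[z]\int \mu z| \leq |[z]|[\mu]^{1/2}(\int \mu z^2)^{1/2} \leq \frac{1}{2}\int \mu z^2 + \frac{1}{2}[\mu][z]^2$. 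The term $-\int f\tilde z$ is controlled using $\mu \geq \alpha$: Cauchy--Schwarz and Young's inequality deliver a bound of the form $\frac{1}{2\alpha}\norm{f}_{L^2}^2$ plus an absorbable fraction of $\int \mu z^2$ and $[\mu][z]^2$. Integrating over $[0,T]$ and moving the absorbable pieces to the left-hand side yields the claimed inequality.

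Existence and uniqueness of $z \in \mathcal C([0,T]; H^{-1}(\T^d))$ follow from classical theory for the linear parabolic equation (\ref{eq:Kolmogorov}): existence via Galerkin approximation (or by smoothing $\mu$ and passing to the limit), continuity in time extracted from $\partial_t z \in L^2([0,T]; H^{-2})$ read off the equation, and uniqueness by applying the a priori estimate to the difference of two solutions (vanishing data and source force $z\equiv 0$). The main obstacle I foresee is matching the precise constants of the statement---in particular the coefficient $1$ on $\int_{\QT}\mu z^2$ and the clean factor $\frac{1}{\alpha}$ on $\norm{f}_{L^2(\QT)}^2$. This appears to require treating the two cross terms $[z]\int\mu z$ and $-\int fz$ jointly, for instance via $|[z]\int\mu z - \int f z| = |\int z([z]\mu - f)| \leq \norm{\mu^{1/2}z}_{L^2}\norm{\mu^{-1/2}([z]\mu - f)}_{L^2}$, so that the Young tuning can be performed once rather than being split across separate terms.
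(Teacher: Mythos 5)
Your approach is essentially the paper's: both test the mean-free part against the potential $\psi$ with $\Delta\psi=-\tilde z$, separate out the mean evolution $\partial_t[z]=[r]$, and recognize the $H^{-1}$ inner product $\int r\psi+[r][z]=\langle r,z\rangle_{H^{-1}}$. The only difference is presentational (you work with the pointwise-in-time differential identity rather than the time-integrated weak formulation).

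The constant-matching worry you flag at the end is not a real obstacle and neither of the joint tricks you propose is needed. Simply apply Young with a $\mu$-weight to the $f$-term alone, $|\!\int f\tilde z|\le\frac12\int f^2/\mu+\frac12\int\mu\tilde z^2$, and then use the elementary identity $\frac12\int\mu\tilde z^2+[z]\int\mu z=\frac12\int\mu z^2+\frac12[\mu][z]^2$ (just expand $\tilde z=z-[z]$). The half of $\int\mu z^2$ moves to the left, leaving exactly $\frac12\int\mu z^2$ with the clean $\frac12[\mu][z]^2$ and $\frac12\int f^2/\mu\le\frac{1}{2\alpha}\norm{f}_{L^2}^2$; integrating in time and multiplying by $2$ gives the stated estimate verbatim. (Your suggested joint Cauchy--Schwarz also works, but note your identity has $-\int f\tilde z$, not $-\int fz$; the extra $[z]\int f$ it generates cancels, so the conclusion is the same.) This is precisely the algebraic step the paper carries out via $2\int\mu z[z]+\int\mu\tilde z^2=\int\mu[z]^2+\int\mu z^2$.
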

\begin{proof}
When $r$ is mean free, existence and uniqueness come from Lemma 1 of \cite{mbh}, because $r$ can be integrated to $\Delta f$ thanks to Proposition \ref{prop:elliptique} of the appendix. If $r$ has non-zero mean $[r]$, one can consider the solution $w$ to the Cauchy problem
\begin{equation} 
\left\{\begin{array}{ll}
\partial_t w - \Delta( \mu w) = \Delta f + r - [r]\\
z(0, \cdot) = z_0
\end{array} \right. 
\end{equation}
Then, $z(t) := w(t) + \int_0^t [r(s)] \, \dd s$ is the unique solution of \eqref{eq:Kolmogorov}.
We now prove the duality estimate stated in the lemma. We only need to prove it for the case where $\mu, f$ and $r$ are smooth. In this case, it follows from classical parabolic theory (see \cite{evans}, Section 7.1.3, Theorem 7) that $z$ is smooth too. Set $\tilde z(t) := z(t) - [z(t)]$. Let, by Proposition \ref{prop:elliptique} of the appendix, $\phi$ such that for all $t$, $\phi(t)$ is the unique mean free solution of the elliptic equation $\Delta \phi(t) = \tilde z(t)$ on $\T^d$.
We apply the weak formulation of \eqref{eq:Kolmogorov} against $\phi$:
\begin{equation*}
    - \int_\QT z \partial_t \phi    -   \int_\QT z\mu \Delta \phi+ \int_{\T^d} z(T) \phi(T)= \int_\QT (f \Delta \phi + r \phi) + \int_{\T^d} z_0 \phi(0).
\end{equation*}
Replacing $z$ with $[z]+ \Delta \phi$, except in $\int_\QT z\mu \Delta \phi$, and remarking that the integral of $[z]$ against a mean free function is zero, we get
$$- \int_\QT \Delta \phi \partial_t \phi    -   \int_\QT z\mu \tilde z+ \int_{\T^d} \Delta \phi(T) \phi(T)= \int_\QT (f \Delta \phi + r \phi) + \int_{\T^d} \Delta \phi(0) \phi(0).$$
Integrating by parts and noticing $\norm{\nabla \phi(t)}_{L^2(\T^d)}=\norm{\tilde z(t)}_{H^{-1}(\T^d)}$, we then have
$$ \int_0^T \frac 1 2 \frac \dd {\dd t} \norm{\tilde z(t)}_{H^{-1}(\T^d)}^2    -   \int_\QT z\mu \tilde z -\norm{\tilde z(T)}_{H^{-1}(\T^d)}^2= \int_\QT (f \Delta \phi + r \phi) - \norm{\tilde z(0)}_{H^{-1}(\T^d)}^2. $$
Integrating the first term and multiplying by $-2$ on both sides yield
$$\norm{\tilde z(T)}_{H^{-1}(\T^d)}^2 + 2\int_\QT z\mu \tilde z = \norm{\tilde z_0}_{H^{-1}(\T^d)}^2 -2 \int_\QT (f \Delta \phi + r \phi).$$
We now add $2\int_\QT \mu z [z]$ on both sides to complete $2\int_\QT \mu z \tilde z $ in $2\int_\QT \mu z^2 $ and apply Young's inequality to the $f$ term and get, recalling that $\Delta \phi = \tilde z$,
\begin{align*}\norm{\tilde z(T)}_{H^{-1}(\T^d)}^2 + 2\int_\QT \mu z^2 & \le \norm{\tilde z_0}_{H^{-1}(\T^d)}^2 + \int_\QT \left( \frac {f^2} {\mu} + \mu \tilde z ^2 \right) \\
&\qquad + 2 \int_0^T \langle r(s), - \Delta^{-1} \tilde z(s) \rangle_{L^2(\T^d)} \, \dd s+2\int_\QT \mu z [z],
\end{align*}
with
\begin{align*} 
    2\int_\QT \mu z [z] +\int_\QT \mu \tilde z ^2 &=  2\int_\QT \mu z [z] +\int_\QT \mu (z- [z]) ^2 =\int_\QT \mu  [z]^2 + \int_\QT \mu  z^2,
\end{align*}
so
\begin{equation} \label{eq:300}
\norm {\tilde z(T)}_{H^{-1}(\T^d)}^2 + \int_\QT \mu z^2 \le \int_\QT \mu  [z]^2 + \norm {\tilde { z_0}}_{H^{-1}(\T^d)}^2 +\int_\QT \frac {f^2} \mu + 2\int_{0}^T \langle r(s), \tilde z(s)\rangle _{H^{-1}(\T^d)} .    
\end{equation}
To complete $\norm {\tilde z(T)}_{H^{-1}(\T^d)}^2$ into $\norm { z(T)}_{H^{-1}(\T^d)}^2$, there remains to compute $[z(T)]^2$. Integrating \eqref{eq:Kolmogorov} in space one gets
$$\partial_t [z] =[r]. $$
This implies
$$\partial_t( [z]^2) = 2[z]\partial_t [z] = 2[z][r],$$
and then
\begin{equation}\label{eq:15378} [z(t)]^2 =  [z_0]^2 + 2\int_{0}^T [z(s)][r(s)] \, \dd s.\end{equation}
By Pythagorean theorem, $$\norm {\tilde { z}(T)}_{H^{-1}(\T^d)}^2 + [z(T)]^2 = \norm {z(T)}_{H^{-1}(\T^d)}^2 \quad \text{ and } \norm {\tilde { z_0}}_{H^{-1}(\T^d)}^2 + [z_0]^2 = \norm {z_0}_{H^{-1}(\T^d)}^2.$$ Moreover, by orthogonality of mean-free functions with constants in $H^{-1}(\T^d)$,
\begin{multline*}
    \langle r(s), \tilde z(s)\rangle _{H^{-1}(\T^d)} + [r(s)][z(s)] = \langle r(s), \tilde z(s)\rangle _{H^{-1}(\T^d)} + \langle [r(s)], [z(s)]\rangle _{H^{-1}(\T^d)} \\
    = \langle r(s), \tilde z(s)\rangle _{H^{-1}(\T^d)} + \langle r(s), [z(s)]\rangle _{H^{-1}(\T^d)} =  \langle r(s), z(s)\rangle _{H^{-1}(\T^d)} , 
\end{multline*} 
so adding \eqref{eq:15378} with $t=T$ to \eqref{eq:300} we get
$$\norm { z(T)}_{H^{-1}(\T^d)}^2 + \int_\QT \mu z^2 \le \int_\QT \mu  [z]^2 + \norm {z_0}_{H^{-1}(\T^d)}^2 +\int_\QT \frac {f^2} \mu + 2\int_{0}^T \langle r(s),  z(s)\rangle _{H^{-1}(\T^d)} \, \dd s, $$
which, using Cauchy-Schwarz inequality and $ \mu \geq  \alpha$, gives the announced estimate.
\end{proof}

\begin{proof}[Proof of Theorem \ref{th:cont_stab}]
Set $z = u - \overline{u}$, $w = v - \overline{v}$, $L_i = \norm { \mu_i'} _{L^\infty(\mathbb R_+)}$. The strategy is to use the previous lemma to then apply Grönwall's lemma to $z$ and $w$. In order to get closer from the structure of equation (\ref{eq:Kolmogorov}), we remark that $z$ satisfies
$$\partial_t z - \Delta(\mu_1(\overline v) z) = \Delta((\mu_1(v) - \mu_1(\overline v)) u) + R_1(u,v) -  R_1(\overline u ,\overline v).$$
Setting $f= (\mu_1(v) - \mu_1(\overline v)) u $, $\mu= \mu_1(v)$ and $r=R_1(u,v) -  R_1(\overline u ,\overline v)$, Lemma \ref{lem:duality} applies and using $\alpha_1 \le \mu_1$ on its left-hand side we get
\begin{align}
    &\norm { z(T)}_{H^{-1}}^2 + \alpha_1 \int_\QT z^2  \label{firstineg}\\
    &\qquad \qquad \le \norm {z_0} _{H^{-1}}^2 + \int_0^T [\mu_1(\overline v (s))][z(s)]^2 \, \dd s+ 2\int_0^T \norm {r(s)}_{H^{-1}} \norm {z(s)}_{H^{-1}} \dd s + \frac 1 \alpha_1 \norm f_{L^2(\QT)}^2,  \nonumber
\end{align}
where  $H^{-1}$ stands for  $H^{-1}(\T^d)$.
Let us analyse the right-hand side of this inequality, starting with $r(s)$. 
Writing $C_1$ is the Lipschitz constant of $R_1$, $|r(s,x)| \le C_1(|z(s,x)|+|w(s,x)|)$ and
$$\norm {r(s)}_{H^{-1}} \le \norm {r(s)}_{L^2} \le C_1 (\norm {z(s)}_{L^2} +\norm {w(s)}_{L^2}).$$
For the $f$ term, we use Hölder's inequality and the Lipschitz constant $L_1$ of $\mu_1$:
$$\norm {f}_{L^2(\QT)}^2 \le L_1^2 \int_\QT w^2 u^2 \le L_1^2 \norm {u}_{L^\infty(\QT)}^2 \,  \int_\QT w^2.$$
Besides
$[\mu_1(\overline v(s))]$ is upper-bounded by  $\norm {\mu_1} _{L^\infty(\mathbb R_+)}$.
Using these estimates, \eqref{firstineg} becomes
\begin{multline} \label{eq:35}
    \norm { z(T)}_{H^{-1}}^2 + \alpha_1 \int_\QT z^2 \le  \norm {z_0} _{H^{-1}}^2 + \norm {\mu_1} _{L^\infty(\mathbb R_+)}\int_0^T [z(s)]^2 \, \dd s  \\+  2C_1\int_0^T (\norm {z(s)}_{L^2} +\norm {w(s)}_{L^2}) \norm {z(s)}_{H^{-1}} \dd s 
    + \frac {L_1^2} {\alpha_1} \norm {u}_{L^\infty(\QT)}^2 \, \int_\QT w^2.
\end{multline}
We then use Young's inequality $ab \le \frac \varepsilon 2  a^2 + \frac 1 {2\varepsilon} b^2$, with $\varepsilon$ to be determined later, and get:
\begin{align*}
    \int_0^T (\norm {z(s)}_{L^2} +\norm {w(s)}_{L^2}) \norm {z(s)}_{H^{-1}} \dd s
    &\le \frac \varepsilon 2 \int_0^T (\norm {z(s)}_{L^2} +\norm {w(s)}_{L^2})^2 \dd s +  \frac 1 {2\varepsilon} \int_0^T \norm {z(s)}_{H^{-1}}^2 \dd s\\
    &\le \varepsilon  \left(\int_\QT w^2 + \int_\QT z^2\right) + \frac 1 {2\varepsilon} \int_0^T \norm {z(s)}_{H^{-1}}^2 \dd s.
\end{align*}
Dividing the estimate \eqref{eq:35} by $\alpha_1$, we then get
\begin{multline} \label{eq:36}
    \frac 1 {\alpha_1} \norm { z(T)}_{H^{-1}}^2 +\int_\QT z^2 \le  \frac 1 {\alpha_1}\norm {z_0} _{H^{-1}}^2 + \frac{\norm {\mu_1} _{L^\infty(\mathbb R_+)}}{\alpha_1}\int_0^T [z(s)]^2 \, \dd s +   \frac{1} {\alpha_1\varepsilon} \int_0^T \norm {z(s)}_{H^{-1}}^2 \dd s\\
    +\frac {2\varepsilon}{\alpha_1}  \left(\int_\QT w^2 + \int_\QT z^2 \right) + \frac {L_1^2} {\alpha_1^2} \norm {u}_{L^\infty(\QT)}^2\, \int_\QT w^2.
\end{multline}
Passing the $\int_\QT z^2$ term to the left-hand side, then using $[z] \lesssim \norm {z} _{H^{-1}}$ and setting $C_1^\varepsilon = (\norm {\mu_1} _{L^\infty(\mathbb R_+)}+1/\varepsilon)/\alpha_1$,  we obtain
$$\left(1- \frac {2\varepsilon}{\alpha_1}\right) \int_\QT z^2 \le \frac 1 {\alpha_1} \norm {z_0} _{H^{-1}}^2 + C_1^\varepsilon \int_0^T \norm {z(s)}_{H^{-1}}^2 \dd s + \left( \frac {L_1^2} {\alpha_1^2} \norm {u}_{L^\infty(\QT)}^2 + \frac {2\varepsilon}{\alpha_1} \right) \, \int_\QT w^2.$$
We also have a similar inequality on $w$:
$$\left(1-\frac {2\varepsilon}{\alpha_2}\right) \int_\QT w^2 \le \frac 1 {\alpha_1}\norm {w_0} _{H^{-1}}^2 + C_2^\varepsilon \int_0^T \norm {w(s)}_{H^{-1}}^2 \dd s + \left( \frac {L_2^2} {\alpha_2^2} \norm {v}_{L^\infty(\QT)}^2 + \frac {2\varepsilon}{\alpha_2} \right) \, \int_\QT z^2,$$
in which one can inject the one on $\int_\QT z^2$:
\begin{multline*}
    \left(1-\frac {2\varepsilon}{\alpha_2}\right) \int_\QT w^2 
    \le  \frac 1 {\alpha_1}\norm {w_0} _{H^{-1}}^2 + C_2^\varepsilon \int_0^T \norm {w(s)}_{H^{-1}}^2 \dd s\\
    +  \frac{1}{\alpha_1-2\varepsilon} \norm {z_0} _{H^{-1}}^2 + \frac{C_1^\varepsilon}{1-2\epsilon/\alpha_1} \int_0^T \norm {z(s)}_{H^{-1}}^2 \dd s\\
    + \left( \frac {L_2^2} {\alpha_2^2} \norm {v}_{L^\infty(\QT)}^2 + \frac {2\varepsilon}{\alpha_2} \right)\frac 1 {1-\frac {2\varepsilon}{\alpha_1}} \left( \frac {L_1^2} {\alpha_1^2} \norm {u}_{L^\infty(\QT)}^2 + \frac {2\varepsilon}{\alpha_1} \right) \, \int_\QT z^2 .
\end{multline*}
The smallness condition then ensures the existence of $\varepsilon > 0$ such that
$$\left( \frac {L_2^2} {\alpha_2^2} \norm {v}_{L^\infty(\QT)}^2 + \frac {2\varepsilon}{\alpha_2} \right)\frac 1 {1-\frac {2\varepsilon}{\alpha_1}} \left( \frac {L_1^2} {\alpha_1^2} \norm {u}_{L^\infty(\QT)}^2 + \frac {2\varepsilon}{\alpha_1} \right) < 1 - \frac {2\varepsilon}{\alpha_1}.$$
This allows to absorb the $ \int_\QT w^2 $ term of the left-hand side and get:
$$
    \int_\QT w^2 
    \lesssim  \norm {w_0} _{H^{-1}}^2 +\int_0^T \norm {w(s)}_{H^{-1}}^2 \dd s
    +  C\norm {z_0} _{H^{-1}}^2 +\int_0^T \norm {z(s)}_{H^{-1}}^2 \dd s,
$$
where $\varepsilon$ and thus the constant in this inequality depend only on the difference between the two members of \eqref{eq:smallness}.

One can also get a similar estimate on $\int_\QT z^2$. Back to the inequality \eqref{eq:36} with $\norm { z(T)}_{H^{-1}}^2$, we inject the previous bound on $ \norm {w}_{L^2(\QT)}^2 $ to see
\begin{equation*}
    \norm { z(T)}_{H^{-1}}^2+  \norm {z}_{L^2(\QT)}^2
    \lesssim \norm {w_0} _{H^{-1}}^2 + \norm {z_0} _{H^{-1}}^2 +\int_0^T \norm {w(s)}_{H^{-1}}^2 \, \dd s
    +  \int_0^T \norm {z(s)}_{H^{-1}}^2 \, \dd s.
\end{equation*}
Summing this inequality with its analogue for $w$ and setting
\begin{align*}
    \psi(t) &= \norm { z(t)}_{H^{-1}}^2 + \norm { w(t)}_{H^{-1}}^2 ,\qquad
    \theta(t) = \norm {z}_{L^2(Q_t)}^2 + \norm {w}_{L^2(Q_t)}^2,
\end{align*}
we get
$$ \psi(T) + \theta(T) \lesssim \norm {w_0} _{H^{-1}}^2 + \norm {z_0} _{H^{-1}}^2 + \int_0^T \psi(s) \dd s. $$
Grönwall's lemma then gives the wanted result.\end{proof}

\subsection{Space $\ell^2(\T_M)$ and discretization of the continuous solution}
\label{secdiscret}

Let $M \in \mathbb N^*$. In this   subsection, we restrict ourselves to dimension $d=1$ and  study the functional spaces over $\T_M$ and the discrete laplacian operator $\Delta_M$.
We first recall that $\Delta_M$ is a circulant matrix so its eigenvalues are Fourier modes and we can compute explicitly its spectrum, as explained in \cite{circulant} and already used in \cite{mbh}. We then complement by some elementary estimates on discrete norms which will be useful to deal with the jumps of the martingales and the reaction terms.
\begin{Prop} \label{bronespectre}
    The operator $-\Delta_M$ defined on the finite dimensional space $\ell^2(\T_M)$ is symmetric. Its eigenvalues are all contained in $\{0\} \cup [16, 4M^2]$ and $0$ has multiplicity $1$, the kernel of $-\Delta_M$ being composed of constant functions.
\end{Prop}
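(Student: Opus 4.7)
The plan is to diagonalise $-\Delta_M$ in the discrete Fourier basis of $\ell^2(\mathbb T_M)$. First I would check symmetry directly: a discrete Abel transform gives
\[
\langle -\Delta_M \mathbf u, \mathbf v \rangle_{2,M} = M\sum_{j=0}^{M-1} (u_{j+1}-u_j)(v_{j+1}-v_j),
\]
which is manifestly symmetric (and non-negative) in $(\mathbf u,\mathbf v)$. This already proves symmetry and shows $-\Delta_M \ge 0$; the characterisation of the kernel is then straightforward, since $\langle -\Delta_M \mathbf u, \mathbf u\rangle_{2,M}=0$ forces all discrete differences $u_{j+1}-u_j$ to vanish, i.e.\ $\mathbf u$ is constant.

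Next I would compute the spectrum by testing the Fourier modes $\mathbf f_k$ defined by $(f_k)_j = e^{2i\pi k j/M}$ for $k\in\{0,\dots,M-1\}$. A direct substitution into the definition $(\Delta_M \mathbf u)_j = M^2(u_{j+1}+u_{j-1}-2u_j)$ yields
\[
\Delta_M \mathbf f_k = 2M^2(\cos(2\pi k/M)-1)\,\mathbf f_k = -4M^2 \sin^2(\pi k/M)\,\mathbf f_k.
\]
Since the $\mathbf f_k$ form a basis of $\mathbb C^M$, this exhausts the spectrum of $-\Delta_M$, giving the $M$ real eigenvalues $\lambda_k = 4M^2 \sin^2(\pi k/M)$, $k=0,\dots,M-1$. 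Only $k=0$ gives $\lambda_k=0$, which confirms that $0$ is simple.

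It only remains to localise the nonzero eigenvalues in $[16,4M^2]$. The upper bound is immediate from $\sin^2 \le 1$. For the lower bound I would use the concavity estimate $\sin(x) \ge 2x/\pi$ on $[0,\pi/2]$, together with the symmetry $\sin(\pi k/M) = \sin(\pi(M-k)/M)$. Setting $k' = \min(k, M-k)\ge 1$ for $k\in\{1,\dots,M-1\}$, one gets $M \sin(\pi k/M) \ge M\cdot \tfrac{2}{\pi}\cdot\tfrac{\pi k'}{M} = 2k' \ge 2$, hence $\lambda_k \ge 16$.

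This argument is essentially a computation; the only mild point of care is tracking the constant $16$, which comes from the worst case $k'=1$ with the bound $\sin(x)\ge 2x/\pi$ saturating at $x=\pi/2$ (i.e.\ $k=M/2$ when $M$ is even, and slightly better otherwise). No real obstacle is expected.
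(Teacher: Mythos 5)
Your proof is correct and follows essentially the same route the paper indicates: the paper does not write out a proof but simply notes that $\Delta_M$ is a circulant matrix so that its eigenvectors are the discrete Fourier modes and its spectrum can be computed explicitly (citing the literature on circulant matrices and \cite{mbh}). Your computation $\lambda_k = 4M^2\sin^2(\pi k/M)$ together with the Jordan-type bound $\sin x \ge 2x/\pi$ on $[0,\pi/2]$ is exactly the calculation those references carry out, and the summation-by-parts identity you give for symmetry and the kernel is also the standard argument.
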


This proposition implies that $-\Delta_M$ is a positive-definite symmetric operator on the orthogonal of constant functions, i.e. mean free functions. This allows to define $\Delta_M^{-1}\mathbf  u $ for all mean free $\mathbf u$. We can then define the discrete $H^{-1}$ norm

\begin{Def}
The scalar product $\langle u,v\rangle_{-1,M}$ is defined for all $\mathbf u, \mathbf v : \T_M \longrightarrow \R$ by
$$ \langle \mathbf u, \mathbf v\rangle_{-1,M} = [\mathbf u]_M[\mathbf v]_M + \langle \mathbf u - [\mathbf u]_M, -\Delta_M^{-1} (\mathbf v-[\mathbf v]_M)\rangle_{2,M}.$$
Its associated norm is defined by
$$ \norm {\mathbf u}_{-1,M}^2 = \langle \mathbf u,\mathbf u\rangle_{-1,M}. $$
\end{Def}
The fact that $\langle \cdot , \cdot\rangle_{-1,M}$is a scalar product follows immediately from $(-\Delta_M)^{-1}$ being a positive-definite symmetric operator on mean free functions in $\ell^2(\T_M)$.
The spectral gap (independent on $M$, see Proposition \ref{bronespectre}) and the upper bound on the spectrum of $-\Delta_M$ imply the following comparisons between norms:
\begin{Cor} \label{prop:2}
    For $\mathbf u \in \ell^2( \T_M)$, $\norm {\mathbf u}_{-1,M} \le \norm {\mathbf u}_{2,M} \lesssim M \norm {\mathbf u}_{-1,M}$. If $u$ is mean free, then $\norm {\Delta_M^{-1} \mathbf u}_{2,M} \le \norm {\mathbf u}_{2,M}$.
\end{Cor}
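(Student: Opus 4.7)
The proof is essentially a spectral calculation that uses Proposition~\ref{bronespectre} as a black box. My plan is to diagonalize $-\Delta_M$ on the subspace of mean-free functions and compare the three quantities coordinate by coordinate in the eigenbasis.

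First, I would reduce to the mean-free case. For any $\mathbf u \in \ell^2(\T_M)$, write $\mathbf u = [\mathbf u]_M \mathbf 1 + \tilde{\mathbf u}$ with $\tilde{\mathbf u} := \mathbf u - [\mathbf u]_M \mathbf 1$, which is mean free. Because $\mathbf 1$ and $\tilde{\mathbf u}$ are orthogonal for $\langle\cdot,\cdot\rangle_{2,M}$, we have the Pythagorean identity $\|\mathbf u\|_{2,M}^2 = [\mathbf u]_M^2 + \|\tilde{\mathbf u}\|_{2,M}^2$, and by the very definition of $\|\cdot\|_{-1,M}$, $\|\mathbf u\|_{-1,M}^2 = [\mathbf u]_M^2 + \langle \tilde{\mathbf u}, -\Delta_M^{-1}\tilde{\mathbf u}\rangle_{2,M}$. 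Thus both comparisons reduce to comparing $\langle \tilde{\mathbf u}, -\Delta_M^{-1}\tilde{\mathbf u}\rangle_{2,M}$ with $\|\tilde{\mathbf u}\|_{2,M}^2$ on the mean-free subspace.

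Next, I invoke Proposition~\ref{bronespectre}. Since $-\Delta_M$ is symmetric and its kernel is exactly the constants, it restricts to a positive-definite symmetric operator on the mean-free subspace with eigenvalues $\lambda_k \in [16, 4M^2]$. Diagonalizing $\tilde{\mathbf u}$ in the associated orthonormal eigenbasis $(\mathbf f_k)$ as $\tilde{\mathbf u} = \sum_k c_k \mathbf f_k$, one gets
\begin{equation*}
\langle \tilde{\mathbf u}, -\Delta_M^{-1}\tilde{\mathbf u}\rangle_{2,M} = \sum_k \frac{c_k^2}{\lambda_k}, \qquad \|\tilde{\mathbf u}\|_{2,M}^2 = \sum_k c_k^2.
\end{equation*}
From $\lambda_k \ge 16 \ge 1$, we obtain $\langle \tilde{\mathbf u}, -\Delta_M^{-1}\tilde{\mathbf u}\rangle_{2,M} \le \|\tilde{\mathbf u}\|_{2,M}^2$, which together with the Pythagorean identities gives $\|\mathbf u\|_{-1,M} \le \|\mathbf u\|_{2,M}$. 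From $\lambda_k \le 4M^2$, we conversely have $\|\tilde{\mathbf u}\|_{2,M}^2 \le 4M^2 \langle \tilde{\mathbf u}, -\Delta_M^{-1}\tilde{\mathbf u}\rangle_{2,M}$, and hence $\|\mathbf u\|_{2,M}^2 \le [\mathbf u]_M^2 + 4M^2\langle \tilde{\mathbf u}, -\Delta_M^{-1}\tilde{\mathbf u}\rangle_{2,M} \lesssim M^2 \|\mathbf u\|_{-1,M}^2$.

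Finally, for the last statement, if $\mathbf u$ is mean free then so is $\Delta_M^{-1}\mathbf u$, and in the same eigenbasis $\Delta_M^{-1}\mathbf u = -\sum_k (c_k/\lambda_k)\mathbf f_k$, so
\begin{equation*}
\|\Delta_M^{-1}\mathbf u\|_{2,M}^2 = \sum_k \frac{c_k^2}{\lambda_k^2} \le \frac{1}{16^2}\sum_k c_k^2 \le \|\mathbf u\|_{2,M}^2.
\end{equation*}
There is no real obstacle here: the entire content lies in Proposition~\ref{bronespectre}, which pins down the spectrum of $-\Delta_M$; the corollary is then a one-line spectral estimate on each eigenspace, the only subtle point being to keep track of the mean/mean-free splitting in the definition of $\|\cdot\|_{-1,M}$.
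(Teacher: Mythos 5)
Your proof is correct and is exactly the spectral argument the paper gestures at (the paper provides no written proof, only the remark that the spectral gap and upper bound on the spectrum of $-\Delta_M$ from Proposition~\ref{bronespectre} imply the corollary); your careful tracking of the mean/mean-free splitting and the diagonalization on the mean-free subspace fills in precisely what the paper leaves implicit.
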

The following Proposition is a discrete version of the Sobolev embedding $L^1(\T) \hookrightarrow H^{-1}(\T)$, obtained usually as the dual embedding of $H^{1}(\T) \hookrightarrow L^\infty(\T) $. It is only true in dimension $1$ and is the main use of one-dimensionality in this paper, specially involved in the comparison between the semi-discrete system and the PDE. 
\begin{Prop}
\label{prop:2bis}
For $\mathbf u \in \ell^2( \T_M)$, $\norm{\mathbf u}_{-1,M} \lesssim \norm{\mathbf u}_{1,M}$.
\end{Prop}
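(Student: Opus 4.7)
The plan is to mimic the continuous proof of $L^1(\mathbb T)\hookrightarrow H^{-1}(\mathbb T)$, which proceeds by dualizing the one-dimensional embedding $H^1(\mathbb T)\hookrightarrow L^\infty(\mathbb T)$. Accordingly I will decompose $\mathbf u=[\mathbf u]_M\mathbf 1+\tilde{\mathbf u}$ with $\tilde{\mathbf u}=\mathbf u-[\mathbf u]_M\mathbf 1$ mean free, introduce the potential $\phi:=-\Delta_M^{-1}\tilde{\mathbf u}$ which is mean free too, and bound the two pieces of $\norm{\mathbf u}_{-1,M}^2=[\mathbf u]_M^2+\langle\tilde{\mathbf u},\phi\rangle_{2,M}$ separately. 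The mean-value term is immediate since $|[\mathbf u]_M|\le\norm{\mathbf u}_{1,M}$, so the real content is controlling $\langle\tilde{\mathbf u},\phi\rangle_{2,M}$ by $\norm{\mathbf u}_{1,M}^2$.

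The first step is a discrete integration by parts / Abel summation showing that
\[\langle\tilde{\mathbf u},\phi\rangle_{2,M}=\langle-\Delta_M\phi,\phi\rangle_{2,M}=M\sum_{j=1}^M(\phi_j-\phi_{j-1})^2=:\norm{D\phi}_{2,M}^2,\]
so the claim reduces to $\norm{D\phi}_{2,M}^2\lesssim\norm{\mathbf u}_{1,M}^2$. This is the identity that makes the $-1,M$ norm behave like the dual of the discrete $H^1$ seminorm.

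The key step, which is also the one-dimensional heart of the argument, is a discrete Sobolev embedding: for any mean-free $\phi\in\ell^2(\mathbb T_M)$,
\[\norm{\phi}_{\infty,M}\le\norm{D\phi}_{2,M}.\]
I would prove it by writing, for each $k$, $M\phi_k=\sum_{j=1}^M(\phi_k-\phi_j)$ (using $[\phi]_M=0$), then applying Cauchy–Schwarz to the telescoping sums $\phi_k-\phi_j=\sum_{i=j+1}^k(\phi_i-\phi_{i-1})$ (or the shorter arc on the discrete circle), whose length is at most $M$. This is precisely where dimension $d=1$ is used: on the torus $\mathbb T_M$ the shortest discrete path between any two sites has length $\le M$, and Cauchy–Schwarz on such a path gives exactly the required $\norm{D\phi}_{2,M}$ bound with no dimensional penalty. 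I expect this to be the main obstacle to read, and the only place the one-dimensional restriction announced in the statement really bites.

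The conclusion is then a Hölder duality argument: combining the identity and the embedding,
\[\norm{D\phi}_{2,M}^2=\langle\tilde{\mathbf u},\phi\rangle_{2,M}\le\norm{\tilde{\mathbf u}}_{1,M}\,\norm{\phi}_{\infty,M}\le\norm{\tilde{\mathbf u}}_{1,M}\,\norm{D\phi}_{2,M},\]
so $\norm{D\phi}_{2,M}\le\norm{\tilde{\mathbf u}}_{1,M}\le 2\norm{\mathbf u}_{1,M}$ (the last inequality because $|[\mathbf u]_M|\le\norm{\mathbf u}_{1,M}$). Adding the mean-value term back yields $\norm{\mathbf u}_{-1,M}^2\lesssim\norm{\mathbf u}_{1,M}^2$, which is the claimed estimate.
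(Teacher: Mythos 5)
Your proposal is correct, and it proves the same proposition by a genuinely different mechanism than the paper. The paper writes down an explicit ``double-sum'' potential $\Phi_k = \frac{1}{M^2}\sum_{j=1}^{k}\sum_{i=1}^{j-1}\widetilde u_i$, checks that $\Delta_M\Phi=\widetilde{\mathbf u}$ and that $\Phi$ is $M$-periodic, and then bounds $\norm{\Delta_M^{-1}\widetilde{\mathbf u}}_\infty$ by $\norm{\widetilde{\mathbf u}}_{1,M}$ via a direct triangle-inequality estimate on the double sum; the result then follows from a single application of H\"older to $\langle\widetilde{\mathbf u},-\Delta_M^{-1}\widetilde{\mathbf u}\rangle_{2,M}$. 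You avoid constructing $\Delta_M^{-1}$ explicitly: you identify $\langle\widetilde{\mathbf u},\phi\rangle_{2,M}$ with the discrete Dirichlet energy $\norm{D\phi}_{2,M}^2$ by Abel summation, prove the discrete Sobolev embedding $\norm{\phi}_\infty\le\norm{D\phi}_{2,M}$ for mean-free $\phi$ (the one-dimensional ingredient), and close with the bootstrap $\norm{D\phi}_{2,M}^2\le\norm{\widetilde{\mathbf u}}_{1,M}\norm{\phi}_\infty\le\norm{\widetilde{\mathbf u}}_{1,M}\norm{D\phi}_{2,M}$. The paper's route is shorter and more elementary once one guesses the explicit $\Phi$; yours is more structural and mirrors the continuous argument for $L^1(\T)\hookrightarrow H^{-1}(\T)$ more literally, isolating the $H^1\hookrightarrow L^\infty$ embedding as a reusable lemma and replacing an explicit inversion by a self-bounding argument. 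In both cases the use of dimension $d=1$ is transparent: in the paper it sits in the very form of the double sum, in your proof in the Sobolev embedding.
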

\begin{proof}
Denote $\widetilde {\mathbf u}: = \mathbf u - [ \mathbf u]_M$. 
We first prove that $ \norm {\Delta^{-1}_M \widetilde {\mathbf u}}_{\infty} \lesssim \norm { \mathbf u}_{1,M}$. Set
$$\Phi_k = \frac 1 {M^2} \sum_{j=1}^{k} \sum_{i=1}^{j-1}  \widetilde u_i.$$
One can check that $(\Phi_k)_{k \in \mathbb N}$ is $M$-periodic and thus defines an element $\Phi$ of $\ell^2(\T_M)$. Moreover, $(\Delta_M \Phi)_k =  \widetilde u_k$ so $\Phi - [\Phi]_M = \Delta^{-1}_M  \widetilde {\mathbf u}$, hence 
$$\norm {\Delta^{-1}_M  \widetilde {\mathbf u}}_{\infty} \le 2 \norm {\Phi}_{\infty} \le \frac 1 {M^2} \sum_{j=1}^{M} \sum_{i=1}^{M}  |\widetilde u_i| =  \norm {\widetilde{\mathbf u}}_{1,M} \le 2 \norm { \mathbf u}_{1,M},$$
which proves that $ \norm {\Delta^{-1}_M \widetilde {\mathbf u}}_{\infty} \lesssim \norm { \mathbf u}_{1,M}$.
Then,
$$\norm{\mathbf u}_{-1,M}^2  = \langle \tilde {\mathbf u}, -\Delta^{-1}_M  \tilde {\mathbf u} \rangle_{2,M} + [\mathbf u]_M^2 \le \norm{ \tilde {\mathbf u}}_{1,M} \norm{\Delta^{-1}_M \tilde {\mathbf u}}_{\infty}+ \norm{\mathbf u}_{1,M}^2 \lesssim \norm{\mathbf u}_{1,M}^2,$$
which ends the proof.
\end{proof}
\begin{Cor} \label{cor:BoundTM}
Let $\mathbf u : [0,T] \rightarrow \ell^2( \T_M)$. Then,
$$ |||\mathbf u|||_{T,M} \lesssim \sqrt{1+ TM}  \sup_{s \le T} \norm{\mathbf u(s)}_{1,M}.$$
\end{Cor}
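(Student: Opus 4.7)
The plan is to bound each of the two terms composing $|||\mathbf u|||_{T,M}^2$ separately by $\sup_{s\le T}\norm{\mathbf u(s)}_{1,M}^2$, picking up an extra factor $M$ only for the $L^2$-in-time piece.

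For the $H^{-1}$ piece, Proposition~\ref{prop:2bis} gives directly $\norm{\mathbf u(t)}_{-1,M}\lesssim \norm{\mathbf u(t)}_{1,M}$, so
\[
\sup_{t\le T}\norm{\mathbf u(t)}_{-1,M}^2 \;\lesssim\; \sup_{t\le T}\norm{\mathbf u(t)}_{1,M}^2.
\]
For the $L^2(Q_{T,M})$ piece, the strategy is a pointwise interpolation $\norm{\cdot}_{2,M}^2 \le \norm{\cdot}_\infty \norm{\cdot}_{1,M}$, combined with the crude bound $\norm{\mathbf u}_\infty \le M\norm{\mathbf u}_{1,M}$ coming from the normalization of the counting measure on $\T_M$ (each point has mass $1/M$, so $|u_j|\le\sum_i|u_i|=M\norm{\mathbf u}_{1,M}$). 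This yields $\norm{\mathbf u(t)}_{2,M}^2\le M\norm{\mathbf u(t)}_{1,M}^2$, hence
\[
\int_0^T \norm{\mathbf u(t)}_{2,M}^2\,\dd t \;\le\; TM\,\sup_{t\le T}\norm{\mathbf u(t)}_{1,M}^2.
\]

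Summing the two estimates produces
\[
|||\mathbf u|||_{T,M}^2 \;\lesssim\; (1+TM)\,\sup_{t\le T}\norm{\mathbf u(t)}_{1,M}^2,
\]
and taking square roots finishes the proof. There is no genuine obstacle here: the statement is essentially a packaging of Proposition~\ref{prop:2bis} together with the elementary embeddings $\ell^1\hookrightarrow \ell^\infty$ and $\ell^1\cap\ell^\infty\hookrightarrow\ell^2$ on a discrete $M$-point space normalised with total mass one, and the factor $\sqrt{1+TM}$ comes solely from the time integration of the crude $\ell^\infty$ bound.
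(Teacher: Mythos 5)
Your proposal is correct and follows the same route as the paper: Proposition~\ref{prop:2bis} for the $H^{-1}$-in-time supremum, and for the $L^2(Q_{T,M})$ part the bound $\norm{\mathbf u}_{2,M}^2\le M\norm{\mathbf u}_{1,M}^2$ (which the paper writes directly via $|u_j|\le M\norm{\mathbf u}_{1,M}$ inside the double sum, and you phrase as $\ell^1$--$\ell^\infty$ interpolation) followed by time integration. The two write-ups are equivalent in substance.
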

\begin{proof}
First, as a consequence of Proposition \ref{prop:2bis} we have
$$\sup_{s \le T} \norm{\mathbf u(s)}_{-1,M}^2 \lesssim \sup_{s \le T} \norm{\mathbf u(s)}_{1,M}^2.$$
Then, bounding $|u_j(s)|$ by $\sum_{j=1}^M |u_j(s)|= M\norm{\mathbf u(s)}_{1,M} $, we get
\begin{align*}
    \int_\QTM \mathbf u^2 &= \frac 1 M \int_0^T \sum_{j=1}^M |u_j(s)|^2 \, \dd s  \\
    &\le \int_0^T \sum_{j=1}^M |u_j(s)| \norm{\mathbf u(s)}_{1,M} \, \dd s =   \int_0^T M \norm{\mathbf u(s)}_{1,M}^2 \, \dd s \le TM  \sup_{s \le T} \norm{\mathbf u(s)}_{1,M}^2.
\end{align*}
Assembling these two bounds yields the result.
\end{proof}
\begin{Prop} \label{prop:jumpSize}
Denoting by $(\mathbf e_i)_{ 1 \le i \le M}$ the canonical basis of $\R^{\T_M}$, the following estimates hold for all $M \in \mathbb N^*$,
$$ \norm{\mathbf e_i}_{-1,M}^2  \lesssim \frac 1 { M} \quad ; \quad \norm{\mathbf e_{i+1} - \mathbf e_{i} }_{-1,M}^2  \lesssim \frac 1 { M^3}.$$
\end{Prop}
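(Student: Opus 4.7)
The plan is to prove the two bounds separately, using a soft argument for the first and an explicit inversion of the discrete Laplacian for the second, because the naive $\ell^1$-based estimate supplies only $1/M^2$ in both cases and this is insufficient for the second inequality.

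For $\norm{\mathbf{e}_i}_{-1,M}^2 \lesssim 1/M$, I would simply invoke Proposition~\ref{prop:2bis} (the discrete $\ell^1 \hookrightarrow H^{-1}$ embedding): since $\norm{\mathbf{e}_i}_{1,M} = 1/M$, one directly obtains $\norm{\mathbf{e}_i}_{-1,M}^2 \lesssim 1/M^2 \leq 1/M$. No further work is required here.

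For the second estimate, set $\mathbf{u} := \mathbf{e}_{i+1} - \mathbf{e}_i$. This function is mean-free, hence $\norm{\mathbf{u}}_{-1,M}^2 = \langle \mathbf{u}, -\Delta_M^{-1}\mathbf{u}\rangle_{2,M}$. The plan is to invert $\Delta_M$ explicitly, following the primitive construction of Proposition~\ref{prop:2bis}. Seeking $\Phi$ with $\Delta_M \Phi = \mathbf{u}$ and introducing the discrete gradient $G_k := \Phi_k - \Phi_{k-1}$, the equation reduces to the two-term recurrence $G_{k+1} - G_k = u_k/M^2$. Because the cumulative sum $S_k := \sum_{\ell \leq k} u_\ell$ equals $-1$ at $k = i$ and vanishes elsewhere, the sequence $G_\ell$ is constant equal to $G_1$ at every index except $\ell = i+1$, where it drops to $G_1 - 1/M^2$. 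The $M$-periodicity constraint $\sum_{\ell=1}^M G_\ell = 0$ then forces $G_1 = 1/M^3$.

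Since $\mathbf{u}$ is supported on the two indices $\{i, i+1\}$, only $\Phi_i$ and $\Phi_{i+1}$ enter the final scalar product, and I would compute
\[
\norm{\mathbf{u}}_{-1,M}^2 = -\frac{1}{M}\bigl(\Phi_{i+1}-\Phi_i\bigr) = -\frac{G_{i+1}}{M} = \frac{M-1}{M^4} \lesssim \frac{1}{M^3}.
\]
The additive constant ambiguity in $\Phi$ is harmless since $\mathbf{u}$ has mean zero. I do not expect a serious obstacle: the whole argument is a one-dimensional finite-difference calculation, and the improvement from $1/M^2$ to $1/M^3$ comes precisely from exploiting the telescoping (mean-free) structure of $\mathbf{e}_{i+1}-\mathbf{e}_i$, which the soft $\ell^1$ bound is blind to. Translation invariance in $i$ is automatic since $\Delta_M^{-1}$ commutes with lattice shifts, so both bounds hold uniformly in $i$.
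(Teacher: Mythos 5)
Your proof is correct and follows essentially the same approach as the paper: both arguments explicitly invert the discrete Laplacian to obtain $\norm{\mathbf e_{i+1}-\mathbf e_i}_{-1,M}^2 = (M-1)/M^4 \lesssim 1/M^3$, the only cosmetic difference being that you reconstruct the primitive $\Phi$ via the telescoping discrete gradient while the paper reduces to $i=0$ by translation invariance and exhibits the closed form $\Phi_j=(M+1-2j)/(2M^3)$ directly. For the first bound you invoke the $\ell^1\hookrightarrow H^{-1}$ embedding of Proposition~\ref{prop:2bis} (giving the even better $1/M^2$), whereas the paper uses the $\ell^2$ comparison of Corollary~\ref{prop:2}; both are one-line and equally valid.
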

\begin{proof}
The first one is a direct consequence of Corollary \ref{prop:2} and the fact that $\norm{\mathbf e_i}_{2,M}^2 = \frac 1 M$. For the second one, note that by translation invariance of the $\norm {\cdot }_{-1,M}$ norm, $\norm{\mathbf e_{i+1} - \mathbf e_{i}}_{-1,M}= \norm{\mathbf e_{1} - \mathbf e_{0}}_{-1,M} $. Moreover, $\mathbf e_{1} - \mathbf e_{0}$ is mean free and 
$$\Delta^{-1}_M (\mathbf e_{1} - \mathbf e_{0}) = \mathbf \Phi, \quad \text{ where } \, \Phi_j = \frac {M+1 - 2j}{2 M^3} \quad  (1 \le j \le M).$$
Then
$$ \norm{\mathbf e_{1} - \mathbf e_{0}}_{-1,M}^2 = \langle\mathbf e_{1} - \mathbf e_{0}, -\Delta^{-1}_M(\mathbf e_{1} - \mathbf e_{0})\rangle_{2,M} = \frac 1 M \left( \frac {M+1 - 2}{2 M^3} - \frac {M+1 - 2M}{2 M^3} \right) \lesssim \frac 1 {M^3},$$
which ends the proof.
\end{proof}
We observe that this computation shows the order of magnitude of respectively birth/death  and motion to neighborhood for the $M$- discretizated $H^{-1}$ norm. In particular,  the normalization $M^3$ of motion event will be able to absorb the acceleration of time $M^2$ involved in the convergence of random walk to brownian motion. It will be useful in following martingale estimate to obtain convergence of the particle systems soon as  $N$ and $M$ go to infinity.\\

We assume the existence of a bounded non-negative solution $(u,v) \in (  L^2([0, T] ; H^3(\T)) \cap L^\infty(Q_T) )^2$ to \eqref{eq:SKT} satisfying the smallness condition \eqref{eq:petitesse2}. Thanks to Corollary \ref{cor:1}, such a solution is unique. Let us first  precise the regularity of $\mu_1(u), \mu_2(v)$:
\begin{Lemme}
 If $u \in  L^2([0, T] ; H^3(\T)) \cap L^\infty(Q_T)$ and $\mu \in C^3(\R)$, then $\mu(u) \in  L^2([0, T ]; H^3(\T))$. 
\end{Lemme}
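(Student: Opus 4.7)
The plan is to prove a Moser-type (tame) estimate: for a.e.\ $t \in [0,T]$,
\[
\|\mu(u(t,\cdot))\|_{H^3(\T)}^2 \lesssim 1 + \|u(t,\cdot)\|_{H^3(\T)}^2,
\]
with implicit constant depending only on $\|u\|_{L^\infty(Q_T)}$ and on the $\mathcal{C}^3$-seminorms of $\mu$ restricted to the compact interval $[-\|u\|_{L^\infty(Q_T)}, \|u\|_{L^\infty(Q_T)}]$. Once this pointwise-in-time inequality is in hand, integrating over $[0,T]$ and using the assumption $u \in L^2([0,T];H^3(\T))$ immediately yields $\mu(u)\in L^2([0,T]; H^3(\T))$.

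To establish the estimate, I would expand $\partial_x^k \mu(u)$ for $k = 0, 1, 2, 3$ by Faà di Bruno's formula, writing each such derivative as a finite sum of terms of the form
\[
\mu^{(j)}(u)\,\prod_{i=1}^{j} \partial_x^{\alpha_i} u, \qquad \alpha_i \ge 1,\quad \sum_{i=1}^{j}\alpha_i = k.
\]
Since $u\in L^\infty(Q_T)$ and $\mu\in\mathcal{C}^3$, each prefactor $\mu^{(j)}(u)$ is uniformly bounded in $L^\infty$, so the core task is to estimate each product of derivatives in $L^2(\T)$. Only the terms arising at $k=3$ are nontrivial: $\partial_x^3 u$, the cross term $\partial_x u \cdot \partial_x^2 u$, and the cubic term $(\partial_x u)^3$.

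These three would be dealt with through the one-dimensional Gagliardo--Nirenberg inequality on $\T$, using $\|u\|_{L^\infty(\T)}$ as the low-regularity anchor and $\|u\|_{H^3(\T)}$ as the high-regularity one. Concretely, picking the interpolation indices so that
\[
\|\partial_x u\|_{L^6(\T)}^6 \lesssim \|u\|_{L^\infty}^{4}\,\|u\|_{H^3}^{2}
\]
and, via Cauchy--Schwarz,
\[
\|\partial_x u \,\partial_x^2 u\|_{L^2(\T)}^2 \le \|\partial_x u\|_{L^4}^2\,\|\partial_x^2 u\|_{L^4}^2 \lesssim \|u\|_{L^\infty}^{2}\,\|u\|_{H^3}^{2},
\]
produces the required $L^2(\T)$-bound for the cubic and cross terms. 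The term $\mu'(u)\partial_x^3 u$ is dealt with trivially by the $L^\infty$ bound on $\mu'(u)$, and the lower-order contributions in $\partial_x^k \mu(u)$ for $k\leq 2$ are simpler still, using the 1D embedding $H^1(\T)\hookrightarrow L^\infty(\T)$ to control $\|\partial_x u\|_{L^\infty}$ by $\|u\|_{H^2}$.

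The main obstacle is purely combinatorial: one must select the Gagliardo--Nirenberg exponents carefully so that every product $\prod_i \partial_x^{\alpha_i} u$ is bounded by a power of $\|u\|_{H^3}$ of degree \emph{at most} $2$, which is what makes the final integration in time against $\|u\|_{L^2([0,T];H^3)}$ close. This is why the one-dimensional setting ($d=1$) is essential here; in higher dimensions the Sobolev embeddings available at the $H^3$-level would impose different exponents and the same bookkeeping would need to be redone.
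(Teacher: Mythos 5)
Your overall strategy — prove the tame estimate $\|\mu(u(t))\|_{H^3}^2 \lesssim 1 + \|u(t)\|_{H^3}^2$ pointwise in time via Fa\`a di Bruno and Gagliardo--Nirenberg, then integrate — is the right idea and is what the cited black-box result (Theorem~2.87 of \cite{jyc}, the composition/Moser estimate) encapsulates. The paper's proof is the one-liner that invokes that theorem; your proposal re-derives it in the special $H^3$, $d=1$ case. That is a legitimate and instructive alternative.

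However, there is a concrete error in the treatment of the cross term. You split
\[
\|\partial_x u\, \partial_x^2 u\|_{L^2}^2 \le \|\partial_x u\|_{L^4}^2\,\|\partial_x^2 u\|_{L^4}^2
\]
and assert the right-hand side is $\lesssim \|u\|_{L^\infty}^2\,\|u\|_{H^3}^2$. That bound on $\|\partial_x u\|_{L^4}$ cannot come from Gagliardo--Nirenberg with $L^\infty$ and $H^3$ as anchors: the exponent relation gives $\theta = 3/10$, which violates the admissibility constraint $\theta \ge j/m = 1/3$. You can see the failure concretely on $u(x)=\sin(Kx)$: then $\|\partial_x u\|_{L^4}\sim K$ while $\|u\|_{L^\infty}^{7/10}\|u\|_{H^3}^{3/10}\sim K^{9/10}$. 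With $q=\infty$, $r=2$, $m=3$ the admissible Lebesgue indices for $\partial_x u$ start at $L^6$ (with $\theta=1/3$, the exponent you correctly used for the cubic term), so $L^4$ is simply out of reach.

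The fix is to use the asymmetric H\"older split $\tfrac12 = \tfrac16 + \tfrac13$:
\[
\|\partial_x u\, \partial_x^2 u\|_{L^2} \le \|\partial_x u\|_{L^6}\,\|\partial_x^2 u\|_{L^3},
\]
and then the Gagliardo--Nirenberg indices $\theta = j/m$ (the boundary cases, both admissible here since $m-j-n/r \notin \mathbb{N}$) give
\[
\|\partial_x u\|_{L^6} \lesssim \|u\|_{L^\infty}^{2/3}\|u\|_{H^3}^{1/3}, \qquad \|\partial_x^2 u\|_{L^3} \lesssim \|u\|_{L^\infty}^{1/3}\|u\|_{H^3}^{2/3},
\]
whose product is exactly $\|u\|_{L^\infty}\|u\|_{H^3}$. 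This is the split that the general Moser product estimate $\|\partial^a u\, \partial^b u\|_{L^2}\lesssim \|u\|_{L^\infty}\|u\|_{H^{a+b}}$ dictates: the H\"older indices should be $L^{2s/a}$ and $L^{2s/b}$ with $s=a+b$, i.e.\ $(L^6,L^3)$ for $(a,b)=(1,2)$, not $(L^4,L^4)$. With that single correction the rest of the argument, including the $L^6$ estimate for the cubic term and the lower-order contributions, goes through.

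One small remark on your closing comment: the tame composition estimate is not intrinsically one-dimensional — it holds for $\|\mu(u)\|_{H^s}\lesssim_{\|u\|_{L^\infty}} 1+\|u\|_{H^s}$ for any $s>0$ in any dimension, precisely because the $L^\infty$ anchor replaces a Sobolev embedding. What changes with $d$ is only the specific Gagliardo--Nirenberg bookkeeping, not the feasibility of the argument.
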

\begin{proof}
Applying Theorem 2.87 from \cite{jyc}:
$$\norm{\mu(u(t))}_{H^3(\T)} \le \phi_\mu( \norm{u}_{L^\infty(\QT)}) \norm{u(t)}_{H^3(\T)}.$$
Thus, integrating in time, $\norm{\mu(u)}_{ L^2([0, T ]; H^3(\T))} \le \phi_\mu( \norm{u}_{L^\infty(\QT)}) \norm{u}_{ L^2([0, T ]; H^3(\T))} < \infty .$\end{proof}

In order to compare $(u,v)$ with $(\mathbf u^M, \mathbf v^M)$, we begin by putting these two objects in the same space by discretizing $(u,v)$. We thus discretize
$(u,v)$ by restricting it to $\T_M$ and therefore set, for all $t \in [0,T]$, $$\widehat u^M (t) := u_{|\T_M}(t).$$ 
\begin{Prop} \label{prop:rToZero}
    Let $(u,v)$ the solution introduced just above. Then
    \begin{equation*} 
\left\{\begin{array}{ll}
        \frac \dd  {\dd t} \widehat u^M = \Delta_M [\widehat u^M \mu_1 (\widehat v^M)] + R_1(\widehat u^M, \widehat v^M) + \mathbf r_1^M \\
        \frac \dd  {\dd t} \widehat v^M = \Delta_M [\widehat v^M \mu_2 (\widehat u^M)] + R_2(\widehat u^M, \widehat v^M) + \mathbf r_2^M
  \end{array} \right. 
\end{equation*} 
    where the error term ${\mathbf r}$ satisfies ${\mathbf r_1^M} , {\mathbf r_2^M} \underset{M\to\infty}{\longrightarrow} 0$ for the $L^1([0,T]; L^\infty( \T_M))$ norm.
\end{Prop}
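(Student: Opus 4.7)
The plan is to exploit the fact that in the right-hand side of \eqref{eq:SKT} every operation is pointwise in $x$ except for the Laplacian, so that restricting the PDE to $\T_M$ and running the semi-discrete system on the restricted data can only differ through the consistency error between $\Delta$ and $\Delta_M$. I will then quantify that error using the $H^3$ regularity of $u$ and $v$.

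First, I observe that for every $t\in[0,T]$ and every $j$, one has $R_k(u(t,x_j),v(t,x_j))=R_k(\widehat u^M_j(t),\widehat v^M_j(t))$ together with $(\mu_1(v)u)(t,x_j)=\widehat u^M_j(t)\,\mu_1(\widehat v^M_j(t))$, so that evaluating $\partial_t u=\Delta(\mu_1(v)u)+R_1(u,v)$ at $x_j$ forces
\[ \mathbf r_1^M(t,x_j)=\Delta F_1(t,x_j)-\bigl[\Delta_M \widehat{F_1}^M(t)\bigr]_j, \qquad F_1:=\mu_1(v)u,\]
and symmetrically for $\mathbf r_2^M$ with $F_2:=\mu_2(u)v$. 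Since $u,v\in L^2([0,T];H^3(\T))\cap L^\infty(\QT)$ and $H^3(\T)\hookrightarrow \mathcal{C}^2(\T)$ in dimension one, these pointwise evaluations as well as the identity $\partial_t u(t,x_j)=\tfrac{\dd}{\dd t}\widehat u^M_j(t)$ are legitimate.

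The key step is a consistency estimate for a generic $f\in H^3(\T)$. Writing $h=1/M$ and applying twice the fundamental theorem of calculus,
\[ f(x+h)+f(x-h)-2f(x)-h^2 f''(x)=\int_0^h\int_{-t}^t\bigl(f''(x+s)-f''(x)\bigr)\,\dd s\,\dd t.\]
Since $f''\in H^1(\T)$ and $H^1(\T)\hookrightarrow\mathcal{C}^{0,1/2}(\T)$ in one dimension, Cauchy--Schwarz gives $|f''(x+s)-f''(x)|\le|s|^{1/2}\|f'''\|_{L^2(\T)}$; substituting and multiplying by $M^2$ yields the uniform bound
\[ \bigl\|\Delta_M \widehat f^M-\widehat{\Delta f}^M\bigr\|_{\infty}\lesssim M^{-1/2}\|f\|_{H^3(\T)}.\]

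Applying this to $F_1$ and $F_2$ concludes the proof: the preceding lemma provides $\mu_1(v),\mu_2(u)\in L^2([0,T];H^3(\T))$, and a Moser-type product estimate combined with the $L^\infty(\QT)$ bounds on $u,v$ and the compositions $\mu_i(\cdot)$ yields $F_1,F_2\in L^2([0,T];H^3(\T))$. Cauchy--Schwarz in time then produces
\[ \int_0^T\|\mathbf r_k^M(t)\|_{L^\infty(\T_M)}\,\dd t\lesssim \frac{\sqrt T}{\sqrt M}\,\|F_k\|_{L^2([0,T];H^3(\T))}\underset{M\to\infty}{\longrightarrow}0.\]
The main obstacle is carrying out the consistency estimate with only $H^3$ regularity rather than the classical $\mathcal{C}^4$ assumption under which the $O(M^{-2})$ rate is standard; the one-dimensional embedding $H^1\hookrightarrow\mathcal{C}^{0,1/2}$ applied to $f''$ is exactly what unlocks the weaker but still summable $M^{-1/2}$ gain.
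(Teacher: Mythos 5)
Your proof is correct, and it takes a genuinely different route from the paper's. The paper invokes its Proposition~\ref{prop:FD} (the soft $L^2$ convergence $\Delta_h f\to\Delta f$ together with a uniform bound $\|\Delta_M \widehat f-\widehat{\Delta f}\|_{H^1}\lesssim\|f\|_{H^3}$), transfers this to $L^\infty$ by the one-dimensional embedding $H^1\hookrightarrow L^\infty$, obtains pointwise-in-$t$ convergence $\|\mathbf r_1^M(t)\|_{L^\infty(\T_M)}\to 0$, and then concludes by dominated convergence using that $t\mapsto\|u(t)\|_{H^3}\|\mu_1(v(t))\|_{H^3}$ lies in $L^1([0,T])$; no rate is extracted. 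You instead prove a self-contained quantitative consistency bound, using the double integral representation of the second difference and the Cauchy--Schwarz/H\"older modulus for $f''\in H^1(\T)$, which yields the explicit rate $\|\Delta_M\widehat f^M-\widehat{\Delta f}^M\|_\infty\lesssim M^{-1/2}\|f\|_{H^3}$. Combined with the tame (Moser) product estimate giving $F_k=\mu_k u\in L^2([0,T];H^3)$ (sharper than the paper's mere $L^1$ conclusion from the algebra property), this gives $\|\mathbf r_k^M\|_{L^1_tL^\infty_x}\lesssim M^{-1/2}\sqrt T\,\|F_k\|_{L^2_tH^3_x}$ directly, without dominated convergence. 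Your argument is therefore strictly stronger (it quantifies $\delta_M$ in Theorem~\ref{th:disc_stab}) and arguably cleaner, at the small cost of the extra Taylor computation; note as a minor point that the pointwise identity $\partial_t u(t,x_j)=\frac{\dd}{\dd t}\widehat u^M_j(t)$ holds only for a.e.\ $t$ (since $\partial_t u\in L^2_t H^1_x$), which suffices here but should be stated as such, exactly as the paper does.
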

\begin{proof}
If $f \in H^3 (\T)$, then using Proposition \ref{prop:FD} from the appendix we get the following convergence in $H^1 (\T)$:
$$ \frac {\tau_h f + \tau_{-h} f - 2f} {h^2}\underset{h\to 0}{\longrightarrow} f'' = \Delta f.$$
By Sobolev embedding, this convergence also holds in $(C(\T) , \norm \cdot_{L^\infty})$ so, applying the restriction application  $f \mapsto \widehat f  $;$$\Delta_M (\widehat f) = M^2(\tau_{\frac 1 M} \widehat f + \tau_{-\frac 1 M} \widehat f- 2\widehat f) = \left(\frac{\tau_{\frac 1 M} f + \tau_{-\frac 1 M} f - 2 f}{1/M^2}\right)_{|\T_M}   \underset{M\to\infty}{\longrightarrow} \widehat {(\Delta f)}.$$
Since $u, \mu_1(v) \in H^3(\T)$ and $H^3(\T)$ is an algebra, one can apply this convergence to $u \mu_1(v) \in H^3(\T)$ and get for almost every $t \in [0,T]$:
\begin{align*}
    \norm{ \mathbf r_1(t)} _ {L^\infty(\T_M)} &=  \norm {\frac \dd  {\dd t} \widehat u(t) - \Delta_M [\widehat u \mu_1 (\widehat v)](t) - R_1(\widehat u, \widehat v)(t) }_ {L^\infty(\T_M)} \\
     &=  \norm {(\partial_t  u  - R_1( u,  v) - \Delta [ u \mu_1 ( v)] + \Delta [ u \mu_1 ( v)])_{|\T_M}(t)  -\Delta_M [\widehat u \mu_1 (\widehat v)](t) }_ {L^\infty(\T_M)} \\
    &= \norm {(\Delta [ u \mu_1 ( v)] )_{|\T_M} (t) -\Delta_M [\widehat u \mu_1 (\widehat v)](t) }_{L^\infty(\T_M)} \underset{M\to\infty}{\longrightarrow} 0 .
\end{align*}
Moreover, using the notation $\Delta_M:= M^2(\tau_{1/ M} + \tau_{- 1/ M} -2)$ on the continuous space $L^2(\T)$, the Sobolev embedding $H^1(\T) \hookrightarrow L^\infty(\T)$ and Proposition \ref{prop:FD} from the appendix gives
\begin{align*}
    &\norm {(\Delta [ u \mu_1 ( v)] )_{|\T_M} (t) -\Delta_M [\widehat u \mu_1 (\widehat v)](t) }_{L^\infty(\T_M)}\\
    &\qquad \qquad=\norm {(\Delta [ u \mu_1 ( v)] -\Delta_M [ u \mu_1 ( v)] )_{|\T_M} (t) }_{L^\infty(\T_M)}\\
    &\qquad \qquad\le \norm {(\Delta [ u \mu_1 ( v)] -\Delta_M [ u \mu_1 ( v)] ) (t)}_{L^\infty(\T)} \\
    &\qquad \qquad\le C \norm {(\Delta [ u \mu_1 ( v)] -\Delta_M [ u \mu_1 ( v)] ) (t)}_{H^1(\T)} \\
    &\qquad  \qquad\le C\norm { u(t) \mu_1 ( v(t))}_{H^3(\T)} \le C\norm { u(t) }_{H^3(\T)} \norm {\mu_1 ( v(t)) }_{H^3(\T)}. 
\end{align*}
Adding that the right hand side belongs to $L^1([0,T])$,   bounded convergence  concludes the proof. 
\end{proof}

\subsection{Stability of pertubated semi-discrete system and proof of Theorem \ref{th:disc_stab}} 

We study now the semi-discrete system $(\mathbf u^M, \mathbf v^M)$ defined in   \eqref{eq:discSKT} and will consider a pertubation. 
First, we control the norm of  the original (non pertubated) system :
\begin{Prop} \label{prop:U1source} 
Under Hypotheses \ref{hyp}, suppose the initial data $(\mathbf u_0^M ,\mathbf v_0^M)$ is non-negative. Then there exists a global non-negative solution $(\mathbf u^M, \mathbf v^M)$ of \eqref{eq:discSKT}, which satisfies for all $T \ge 0$,
$$
    \sup_{t \le T} \norm{\mathbf u^M(t)}_{1,M} + \int_0^T [ \mathbf u^M (s) \lambda^-_1(\mathbf u^M(s),\mathbf v^M(s))]_M \, \dd s \lesssim e^{\rho_0 T}\norm{\mathbf u^M_0}_{1,M} ,$$
$$
    \sup_{t \le T} \norm{\mathbf v^M(t)}_{1,M} + \int_0^T [ \mathbf v^M(s)  \lambda^-_2(\mathbf u^M(s),\mathbf v^M(s))]_M \, \dd s \lesssim e^{\rho_0 T}\norm{\mathbf v^M_0}_{1,M} .$$
\end{Prop}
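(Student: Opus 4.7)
The plan is to proceed in three stages: Cauchy--Lipschitz for local existence, a sub-tangential argument for invariance of the positive cone, and a Grönwall estimate on the spatial means which both delivers the $L^1$ bound and upgrades the solution to a global one.

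\textbf{Local well-posedness.} The right-hand side of \eqref{eq:discSKT}, viewed as a map $\ell^2(\T_M)^2 \to \ell^2(\T_M)^2$, is locally Lipschitz: $\mu_i$ and $\lambda_i$ are globally Lipschitz by Hypothesis~\ref{hyp}, and $\Delta_M$ is a bounded linear operator on the finite-dimensional space $\ell^2(\T_M)$. Cauchy--Lipschitz then produces a unique maximal $\mathscr{C}^1$ solution on some $[0, T_{\max})$.

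\textbf{Positivity.} I would verify that the cone $\{(\mathbf u, \mathbf v) : \mathbf u, \mathbf v \ge 0\}$ is forward invariant by the standard sub-tangential test. At a boundary point with $u_j = 0$ and the remaining coordinates non-negative, the reaction term vanishes thanks to the structure $R_1 = u \lambda_1$, and
\[
\frac{\dd}{\dd t} u_j = M^2\bigl[\mu_1(v_{j+1}) u_{j+1} + \mu_1(v_{j-1}) u_{j-1}\bigr] \;\ge\; 0
\]
because $\mu_1 \ge \alpha_1 > 0$. Perturbing the right-hand side by $\varepsilon \mathbf 1$ and letting $\varepsilon \to 0$ renders this argument rigorous.

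\textbf{$L^1$ estimate and global existence.} Non-negativity gives $\|\mathbf u^M\|_{1,M} = [\mathbf u^M]_M$. Taking the mean of the evolution and using $[\Delta_M \mathbf w]_M = 0$ on the periodic grid,
\[
\frac{\dd}{\dd t}[\mathbf u^M]_M + [\mathbf u^M \lambda_1^-]_M \;=\; [\mathbf u^M \lambda_1^+]_M,
\]
and symmetrically for $\mathbf v^M$. The condition $\lambda_1 + \lambda_2 \le \rho_0$ of Hypothesis~\ref{hyp}(iii), combined with monotonicity of $(\cdot)^+$, yields the pointwise inequality $\lambda_1^+ \le \rho_0^+ + \lambda_2^-$, so that
\[
\frac{\dd}{\dd t}[\mathbf u^M]_M + [\mathbf u^M \lambda_1^-]_M \;\le\; \rho_0^+ [\mathbf u^M]_M + [\mathbf u^M \lambda_2^-]_M.
\]
To close the Grönwall loop I would combine Young's inequality with the second structural bound $(\lambda_i^-)^2 \lesssim 1 + u(1+\lambda_1^-) + v(1+\lambda_2^-)$ of Hypothesis~\ref{hyp}(iii) to dominate $[\mathbf u^M \lambda_2^-]_M$ by a linear combination of $[\mathbf u^M]_M$ and $[\mathbf v^M]_M$ plus a small fraction of $[\mathbf v^M \lambda_2^-]_M$ (which is absorbed into the analogous left-hand side for $\mathbf v^M$). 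Summing the two resulting inequalities and applying Grönwall to $[\mathbf u^M]_M + [\mathbf v^M]_M$ would give the uniform-in-time control; separating by species, each contribution on the left is non-negative and can be bounded by the corresponding initial datum, producing
\[
\sup_{t \le T} \|\mathbf u^M(t)\|_{1,M} + \int_0^T [\mathbf u^M(s) \lambda_1^-(\mathbf u^M(s), \mathbf v^M(s))]_M \, \dd s \;\lesssim\; e^{\rho_0 T} \|\mathbf u^M_0\|_{1,M},
\]
and its counterpart for $\mathbf v^M$. The uniform $L^1$ bound precludes finite-time blow-up of the finite-dimensional ODE, so $T_{\max} = +\infty$.

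\textbf{Main obstacle.} The genuinely delicate step is closing the Grönwall inequality. The condition $\lambda_1 + \lambda_2 \le \rho_0$ alone does not yield individual bounds $\lambda_i \le \rho_0$, so the cross terms $[\mathbf u^M \lambda_2^-]_M$ and $[\mathbf v^M \lambda_1^-]_M$ must be absorbed jointly; this is precisely where the quadratic structural bound on $(\lambda_i^-)^2$ in Hypothesis~\ref{hyp}(iii) becomes indispensable and cannot be dispensed with.
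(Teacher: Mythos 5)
Your first two stages (Cauchy--Lipschitz on the finite-dimensional space, positivity of the cone via the sub-tangential test using the factorized structure $R_i$ and $\mu_i>0$) coincide exactly with the paper's argument. The divergence is in the $L^1$ estimate, and here your route has a genuine gap. The paper simply invokes the pointwise bound $\lambda_1^+\le\rho_0$, after which the species decouple entirely: the inequality
\[
\norm{\mathbf u(t)}_{1,M}+\int_0^t[\mathbf u\,\lambda_1^-]_M\,\dd s\;\le\;\norm{\mathbf u_0}_{1,M}+\rho_0\int_0^t\norm{\mathbf u(s)}_{1,M}\,\dd s
\]
closes by a scalar Grönwall, with no need for the quadratic structural bound on $(\lambda_i^-)^2$ and no cross-species term. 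You correctly observe that $\lambda_1+\lambda_2\le\rho_0$ alone does not literally yield $\lambda_1^+\le\rho_0$; but the paper's proof (and its later Proposition on discrete stability, which also uses $|\lambda_i|\le\rho_0+\lambda_i^-$) is clearly written under the reading that each $\lambda_i$ is bounded above by $\rho_0$ -- consistent with Hypothesis~\ref{hyp:bd}(ii), where $b_i\le\rho_0+\alpha d_i$ gives exactly $\lambda_i\le\rho_0$.

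Your proposed replacement, absorbing $[\mathbf u\,\lambda_2^-]_M$ via Young plus the quadratic structural bound, does not close. Young's inequality applied to $u\,\lambda_2^-$ inevitably produces a $u^2$ term, and the structural bound $(\lambda_2^-)^2\lesssim 1+u(1+\lambda_1^-)+v(1+\lambda_2^-)$ only controls $(\lambda_2^-)^2$ by the \emph{good} quantities $u\lambda_1^-$ and $v\lambda_2^-$, not the cross term $u\lambda_2^-$. Concretely, take $\lambda_1=v$, $\lambda_2=\rho_0-v$: then $\lambda_1+\lambda_2=\rho_0$ and the quadratic bound holds, yet $[\mathbf u\,\lambda_2^-]_M\approx[\mathbf u\mathbf v]_M$ is a genuinely quadratic quantity which cannot be dominated by $C([\mathbf u]_M+[\mathbf v]_M)+\epsilon[\mathbf v\lambda_2^-]_M$. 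Moreover, even granting the absorption, a coupled Grönwall in $[\mathbf u]_M+[\mathbf v]_M$ would only yield $\sup_t\norm{\mathbf u(t)}_{1,M}\lesssim e^{cT}(\norm{\mathbf u_0}_{1,M}+\norm{\mathbf v_0}_{1,M})$, whereas the stated estimate bounds each species solely in terms of its own initial datum; this per-species form is precisely what the decoupled argument $\lambda_i^+\le\rho_0$ buys.
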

\begin{proof}
Let $T \in \R_+ \cup \{\infty\}$ the maximal time for which the solution is well-defined. Since $\ell^2(\T_M)$ is finite dimensional, Cauchy-Lipschitz theory proves that $T>0$. \\
Let $C = \{(\mathbf f, \mathbf g) \in \ell^2(\T_M)^2 \, | \,  \mathbf f, \mathbf g \ge 0 \}$ be the closed convex cone of positive functions. We first prove that $(\mathbf u,\mathbf v)$ stays in $C$ until the explosion time $T$. To do so, we prove that for any $t<T$, if $(\mathbf u(t),\mathbf v(t)) \in \partial C$ then $(\mathbf u'(t),\mathbf v'(t))$ either vanishes or points towards the interior of $C$. Let $t< T$ be such that $(\mathbf u(t),\mathbf v(t)) \in \partial C$ and $I = \{1 \le j \le M \, | \, u_j(t) =0 \}$. For all $j \in I$, the structure $R_1(u,v) = u \lambda_1(u,v)$ implies 
$$u_j'(t) = M^2(\mu_1(v_{j-1}(t)) u_{j-1}(t) + \mu_1(v_{j+1}(t)) u_{j+1}(t)) .$$
Since all quantities in the right hand side of this equation are non-negative, we have $u_j'(t) \ge 0$. To prove $(\mathbf u'(t),\mathbf v'(t))$ points towards the interior of $C$, it is therefore sufficient to see that $u_j'(t) > 0$ for at least one $j \in I$. The above expression of $u_j'(t) $ shows that it is the case if $j \in I$ and $j+1 \notin I$ so either such a $j$ exists or $I = \{1, \dots , M\}$ and $\mathbf u'(t)$ vanishes. Repeating this argument on $\mathbf v$ proves that $(\mathbf u,\mathbf v)$ stays in $C$.  

The fact that the explosion time $T$ is infinite will follow from the estimate in the proposition, which we now prove for all $t < T$.
First, one has $\norm{\mathbf u(t)}_{1,M} = \langle \mathbf 1, \mathbf u(t) \rangle_{2,M}$ where $\mathbf 1 \in \ell^2(\T_M)$ is the constant function taking value $1$. Since $\Delta_M \mathbf 1 =0$, for all $\mathbf z \in \ell^2(\T_M)$ one has $\langle \mathbf 1 , \Delta_M \mathbf z \rangle_{2,M} = \langle \Delta_M  \mathbf 1, \mathbf z \rangle_{2,M}=0$. Taking the scalar product of \eqref{eq:discSKTsource} with $\mathbf 1$ and using these facts, one gets 
\begin{align*}
    \norm{\mathbf u(t)}_{1,M} &= \norm{\mathbf u_0}_{1,M}  +  \int_0^t \langle \mathbf 1, \Delta_M( \mu_1( \mathbf v) \mathbf u ) + R_1(\mathbf u,\mathbf v) \rangle_{2,M}(s) \, \dd s \\
    &= \norm{\mathbf u_0}_{1,M}  +  \int_0^t  \langle \mathbf 1,  R_1(\mathbf u(s),\mathbf v(s)) \rangle_{2,M} \, \dd s \\ 
    &=\norm{\mathbf u_0}_{1,M}  +  \int_0^t  \langle \mathbf 1,   \mathbf u(s)\lambda_1(\mathbf u(s),\mathbf v(s))  \rangle_{2,M} \, \dd s.  
\end{align*}
From Hypotheses \ref{hyp} ii),  we get $\lambda_1^+(\mathbf u,\mathbf v) \le \rho_0$ so splitting $\lambda_1 =\lambda_1^+ - \lambda_1^-$ and passing $\lambda^-_1$ to the right-hand side,
$$
    \norm{\mathbf u(t)}_{1,M} + \int_0^t  [\mathbf u  \lambda^-_1(\mathbf u,\mathbf v)](s) \, \dd s \le \norm{\mathbf u_0}_{1,M}  +  \rho_0\int_0^t    \norm{ \mathbf u(s)}_{1,M} \, \dd s $$ 
The Grönwall lemma concludes the proof.
\end{proof}

Now, we study  the semi-discrete system $(\mathbf u^M, \mathbf v^M)$ defined in   \eqref{eq:discSKT} with a perturbation term $\mathbf m^M$:
\begin{equation} \label{eq:discSKTsource}
\left\{\begin{array}{ll}
\mathbf u^M(t) = \mathbf u_0^M +  \int_0^t [\Delta_M( \mu_1( \mathbf v^M) \mathbf u^M ) + R_1(\mathbf u^M,\mathbf v^M)](s) \, \dd s + \mathbf m_1^M (t)\\
\mathbf v^M(t) = \mathbf v_0^M +\int_0^t [\Delta_M( \mu_2( \mathbf u^M) \mathbf v^M ) + R_2(\mathbf u^M ,\mathbf v^M)](s) \, \dd s + \mathbf m_2^M (t)
\end{array} \right.  
\end{equation}
The pertubation term $\mathbf m^M$ will become stochastic in the next section. More precisely, it will be  a martingale coming from compensation of  random jump process due to birth, deaths and motions. This motivates the following class of deterministic pertubations, which mix absolute continuous component and jumps :  
$$ \mathbf m_i^M(t) =   \sum_{k \in \mathbb N} \1_{t_{i,k}^M \le t} \, \mathbf a_{i,k}^M-\int_0^t  \boldsymbol \phi_i^M(s) \, \dd s, $$
where $\boldsymbol \phi_i^M: \R_+ \rightarrow \ell^2(\T_M)$ is a function, $(a_{i,k})_{i\in\{1,2\},k\in \mathbb N}$ is an $\ell^2(\T_M)$-valued sequence and for  $i\in \{1,2\}$, $(t_{i,k})_k$ is an increasing sequence of times which is also divergent :  $t_{i,k} \rightarrow \infty$ as $k\rightarrow \infty$. \\

The following proposition explains how  the pertubated system \eqref{eq:discSKTsource} can be compared to discretization $\widehat u^M (t) := u_{|\T_M}(t)$ of the  continuous limiting system (and  also   to \eqref{eq:discSKT}  recalling Proposition \ref{prop:rToZero}).
\begin{Prop} \label{prop:disc_stab}
   Assume Hypotheses \ref{hyp} and \ref{hyp:existSol} and  let $(\mathbf u^M, \mathbf v^M)$ a non-negative solution of \eqref{eq:discSKTsource} and  set 
   $$\mathbf z^M=\mathbf u^M -\widehat u^M, \qquad \mathbf w^M=\mathbf v^M -\widehat v^M.$$ Then, for any $T\geq 0$ and $M\geq 1$,
    \begin{equation*}|||\mathbf z^M|||_{T,M}^2 + |||\mathbf w^M|||_{T,M}^2 \lesssim e^{\Lambda^M(T)} \, \Big(\norm{ \mathbf z_0^M}_{-1,M}^2 + \norm{\mathbf w_0^M }_{-1,M}^2   + \norm{h_1^M}_{L^\infty_T} + \norm{h_2^M}_{L^\infty_T}+\delta_M\Big),
    \end{equation*}
    where the  constants behind  this inequality are independent of     $T,M$ and   $\mathbf m$, and  
    $$\delta_M= \norm{\mathbf r_1^M}_{L^1([0,T]; L^\infty( \T_M))}^2 + \norm{\mathbf r_2^M}_{L^1([0,T]; L^\infty( \T_M))}^2$$
    goes to $0$ as $M$ tends to infinity from Proposition \ref{prop:rToZero},  and $\Lambda^M$ satisfies
    $$  \Lambda^M(T) \lesssim \int_\QTM 1+ \mathbf u^M +\mathbf v^M +\lambda^-_1( \mathbf u^M, \mathbf v^M)^2 + \lambda^-_2( \mathbf u^M, \mathbf v^M)^2,$$
    and $h^M$ is defined by:
    \begin{align}
    h_1^M(t) &:=   2\sum_{k\in \mathbb N} \1_{t_{1,k} \le t} \, \langle \mathbf z(t_k^- ) , a _{1,k}\rangle_{-1,M} -2 \int^t_0 \langle \mathbf z(s) ,\phi_1(s)\rangle_{-1,M} \, \dd s + \sum_{k\in \mathbb N } \1_{t_{1,k} \le t} \, \norm { a _{1,k}}_{-1,M}^2, \nonumber \\
 h_2^M(t) &:=  2\sum_{k\in \mathbb N} \1_{t_{2,k} \le t} \, \langle \mathbf w(t_k^- ) , a _{2,k}\rangle_{-1,M} -2 \int^t_0 \langle \mathbf w(s) ,\phi_2(s)\rangle_{-1,M} \, \dd s+ \sum_{k\in \mathbb N}  \1_{t_{2,k} \le t} \, \norm { a _{2,k}}_{-1,M}^2 . \nonumber
\end{align} 
\end{Prop}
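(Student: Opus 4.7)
My plan is to adapt the duality-based argument used for Theorem~\ref{th:cont_stab} to the discrete, pointwise-perturbed setting. First I would subtract the equation for $(\widehat u^M,\widehat v^M)$ furnished by Proposition~\ref{prop:rToZero} from \eqref{eq:discSKTsource}, writing $\mu_1(\mathbf v^M)\mathbf u^M - \mu_1(\widehat v^M)\widehat u^M = \mu_1(\widehat v^M)\mathbf z^M + \mathbf f_1^M$ with $\mathbf f_1^M := (\mu_1(\mathbf v^M)-\mu_1(\widehat v^M))\mathbf u^M$. This recasts the equation for $\mathbf z^M$ as a discrete Kolmogorov-type equation with frozen diffusion coefficient $\mu_1(\widehat v^M)$, source $\Delta_M \mathbf f_1^M + \mathbf g_1^M - \mathbf r_1^M$ where $\mathbf g_1^M := R_1(\mathbf u^M,\mathbf v^M)-R_1(\widehat u^M,\widehat v^M)$, plus the pointwise perturbation $\mathbf m_1^M$; symmetrically for $\mathbf w^M$.

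Second, I would establish and apply a discrete counterpart to the duality Lemma~\ref{lem:duality} for $\norm{\cdot}_{-1,M}$. The absolutely continuous part is a direct transcription of the continuous proof: pair the equation against $-\Delta_M^{-1}(\mathbf z^M - [\mathbf z^M]_M)$, use the symmetry of $\Delta_M$ to integrate by parts, complete the square in the $\mathbf f_1^M$ term and exploit $\mu_1\geq\alpha_1$. The novelty is the treatment of $\mathbf m_1^M$, which has a drift and a pure-jump part. The jumps are handled by direct pointwise expansion
\[
\norm{\mathbf z^M(t_{1,k})}_{-1,M}^2 - \norm{\mathbf z^M(t_{1,k}^-)}_{-1,M}^2 = 2\langle \mathbf z^M(t_{1,k}^-),\mathbf a_{1,k}\rangle_{-1,M} + \norm{\mathbf a_{1,k}}_{-1,M}^2,
\]
while the drift contributes $-2\int_0^t\langle\mathbf z^M,\boldsymbol\phi_1\rangle_{-1,M}\,\dd s$. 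Summed over $k$ these produce exactly $h_1^M(t)$ (and analogously $h_2^M$), which appear additively on the right-hand side of the estimate.

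Third, I would massage every source term into a Grönwall-ready shape. For the diffusive defect $\mathbf f_1^M$ I would split $\mathbf u^M = \widehat u^M + \mathbf z^M$: the $\widehat u^M$-piece reproduces the continuous-type bound $(L_1^2/\alpha_1)\norm{\widehat u^M}_\infty^2\int(\mathbf w^M)^2$, whose symmetric contribution to the $\mathbf w$-inequality is absorbed on the left-hand side via the smallness assumption~\eqref{eq:petitesse2} exactly as in the proof of Theorem~\ref{th:cont_stab}; the $\mathbf z^M$-piece produces a quartic cross-term $\int(\mathbf z^M)^2(\mathbf w^M)^2$ that I would tame using the pointwise non-negativity inequalities $|\mathbf z^M|\leq \mathbf u^M+\widehat u^M$ and $|\mathbf w^M|\leq \mathbf v^M+\widehat v^M$ together with Young's inequality and the $L^\infty$ bound on $\widehat u^M,\widehat v^M$, shaping it into a coefficient proportional to $1+\mathbf u^M+\mathbf v^M$ multiplying $\norm{\mathbf z^M}_{2,M}^2+\norm{\mathbf w^M}_{2,M}^2$, part of which is absorbed by the $L^2$ control gained from the duality estimate and the remainder enters $\Lambda^M$. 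For the reaction defect $\mathbf g_1^M$ I would use the product structure $R_1 = u\lambda_1$ to split the increment into a bounded piece $\widehat u^M(\lambda_1(\mathbf u^M,\mathbf v^M)-\lambda_1(\widehat u^M,\widehat v^M))$ and an unbounded piece $\mathbf z^M\lambda_1(\mathbf u^M,\mathbf v^M)$, the latter controlled in $\ell^1(\T_M)$ through Hypotheses~\ref{hyp}~iii) and transferred to $\norm{\cdot}_{-1,M}$ via the one-dimensional embedding of Proposition~\ref{prop:2bis}; this is exactly where the $\lambda_i^-(\mathbf u^M,\mathbf v^M)^2$ contribution in $\Lambda^M$ arises.

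The main obstacle I anticipate is the fine balancing between the $L^2$ dissipation gained from the duality lemma on the left and the $L^2$-type bounds produced by the quartic cross-term and by the unbounded piece of the reaction defect on the right, so that after absorption the effective Grönwall coefficient remains \emph{linear} (rather than quadratic) in $\mathbf u^M+\mathbf v^M$; this is what makes $\Lambda^M$ integrable against the moment estimates of Proposition~\ref{prop:U1source} and their stochastic analog needed in Section~\ref{sec:convsto}. Once all source terms are written as $C(t)\bigl(\norm{\mathbf z^M(t)}_{-1,M}^2+\norm{\mathbf w^M(t)}_{-1,M}^2\bigr)$ up to initial, jump and error contributions, I would sum the $\mathbf z$- and $\mathbf w$-inequalities and apply Grönwall's lemma to conclude the claimed $e^{\Lambda^M(T)}$ bound; the error $\mathbf r_i^M$ is absorbed into $\delta_M$ via Cauchy--Schwarz, using the duality pairing $\int_0^T \langle \mathbf z^M,\mathbf r_i^M\rangle_{-1,M}\,\dd s\lesssim \norm{\mathbf r_i^M}_{L^1([0,T];L^\infty(\T_M))}|||\mathbf z^M|||_{T,M}$ and Proposition~\ref{prop:2bis}.
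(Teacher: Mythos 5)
Your overall architecture (discrete duality for $\norm{\cdot}_{-1,M}$, pointwise expansion of the jumps to produce $h_i^M$, splitting the reaction defect and bounding the unbounded piece in $\ell^1$ via Proposition~\ref{prop:2bis}, Grönwall at the end) matches the paper's strategy, and your treatment of the jump/drift parts of $\mathbf m_i^M$ and of the reaction defect $\mathbf g_1^M$ is essentially correct. However, your choice of decomposition for the diffusion term is not the paper's, and it introduces a gap that your proposed fix does not close.

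You write $\mu_1(\mathbf v^M)\mathbf u^M - \mu_1(\widehat v^M)\widehat u^M = \mu_1(\widehat v^M)\mathbf z^M + \mathbf f_1^M$ with $\mathbf f_1^M = (\mu_1(\mathbf v^M)-\mu_1(\widehat v^M))\mathbf u^M$, i.e.\ you freeze the coefficient at $\mu_1(\widehat v^M)$ and put the \emph{unknown, unbounded} $\mathbf u^M$ inside the defect $\mathbf f_1^M$. After the duality lemma, $\int (\mathbf f_1^M)^2 \lesssim \int (\mathbf u^M)^2 (\mathbf w^M)^2$, which (after splitting $\mathbf u^M=\widehat u^M+\mathbf z^M$) produces a genuine quartic cross-term $\int(\mathbf z^M)^2(\mathbf w^M)^2$. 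Your proposed remedy — bound $|\mathbf z^M|\le \mathbf u^M+\widehat u^M$, $|\mathbf w^M|\le \mathbf v^M+\widehat v^M$ and apply Young — cannot yield a coefficient linear in $\mathbf u^M+\mathbf v^M$ multiplying $\norm{\mathbf z^M}_{2,M}^2+\norm{\mathbf w^M}_{2,M}^2$: the best pointwise inequality available is $(\mathbf z^M)^2(\mathbf w^M)^2\le(\mathbf u^M+\widehat u^M)^2(\mathbf w^M)^2\lesssim (1+(\mathbf u^M)^2)(\mathbf w^M)^2$, with a \emph{quadratic} $(\mathbf u^M)^2$ factor, and there is no pointwise inequality $(\mathbf u^M)^2\lesssim 1+\mathbf u^M$ for large $\mathbf u^M$. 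This matters: the proposition requires $\Lambda^M(T)\lesssim\int_\QTM (1+\mathbf u^M+\mathbf v^M+(\lambda_1^-)^2+(\lambda_2^-)^2)$, and the downstream applications — Proposition~\ref{prop:U1source}, the large-deviation control of Section~\ref{sec:nJumps} — provide $\ell^1$-type bounds on $\mathbf u^M,\mathbf v^M$ that do not control $\int(\mathbf u^M)^2$.

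The paper resolves this by choosing the \emph{other} decomposition: $\mu_1(\mathbf v^M)\mathbf u^M-\mu_1(\widehat v^M)\widehat u^M = \mu_1(\mathbf v^M)\mathbf z^M + \widehat u^M\bigl(\mu_1(\mathbf v^M)-\mu_1(\widehat v^M)\bigr)$, i.e.\ $\mu=\mu_1(\mathbf v^M)$ and $f=\widehat u^M\bigl(\mu_1(\mathbf v^M)-\mu_1(\widehat v^M)\bigr)$. Since $\widehat u^M$ is uniformly bounded by $\norm{u}_{L^\infty(Q_T)}$, one gets $\int_\QtM f^2\le L_1^2\norm{u}_{L^\infty(Q_T)}^2\int_\QtM(\mathbf w^M)^2$ directly, the quartic cross-term never appears, and the smallness condition~\eqref{eq:petitesse2} absorbs this piece exactly as in the continuous proof. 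You should replace the frozen coefficient $\mu_1(\widehat v^M)$ by $\mu_1(\mathbf v^M)$ in your first step; everything else in your argument then goes through.
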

 Taking $\mathbf m = 0$, this result  implies  that the interpolations $(\pi_M(\mathbf u^M ), \pi_M(\mathbf v^M))$ converge to $(u,v)$ when $M \rightarrow \infty$. This will yield Theorem \ref{th:disc_stab}, see below, after the proof. It will also play a central role in the next section for the convergence of the sequence of stochastic processes by considering the good pertubation term. We recall that in the proofs, we alleviate notation and do not write exponent $M$ in ${\bf z}$, $\Lambda$, $h_i$, etc.
\begin{proof}
As in the continuous case, in order to get this kind of estimate we first remark that $\mathbf z$  satisfies 
$$\mathbf z(t) = \mathbf z_0 + \int_0^t \left( \Delta_M[\mathbf z(s)\mu(s) + f(s)] +x(s)\right) \, \dd s +x_J(t),$$
with $$\mu = \mu_1( \mathbf v), \quad f= \widehat u (\mu_1(\mathbf v) - \mu_1(\widehat v)), \quad x =R_1(\mathbf u,\mathbf v) - R_1(\widehat u, \widehat v) -\phi_1- \mathbf r_1, \quad x_J(t)=\sum_{k \in \mathbb N} \1_{t_{i,k} \le t} \, \mathbf a_{i,k}.$$
This allows to apply the duality lemma to set up for a Grönwall lemma. Therefore, we can invoke \cite{mbh} and more precisely  we use $\mu_1 \ge \alpha_1$ and Lemma \ref{lem:discont} in Appendix  to  get
\begin{multline} \label{eq:discdual}
    \norm {\mathbf z(t)} ^2_{-1,M} + \alpha_1 \int_\QtM \mathbf z^2
    \le \norm {z_0} ^2_{-1,M} + \int_0^t [\mathbf z (s)]_M^2[\mu(s)]_M \, \dd s + h_1(t)\\
    +\frac 1 {\alpha_1} \int_\QtM f^2 + 2 \int^t_0 \langle \mathbf z(s) ,y(s) \rangle_{-1,M} \, \dd s,
\end{multline}
where $h_1(t)$ is as defined in the above statement and
$$y=R_1(\mathbf u,\mathbf v) - R_1(\widehat u, \widehat v) - \mathbf r_1= x+\phi_1.$$
Let us inspect bounds for the terms on the right-hand side in their order of appearance. 

First, for the $[z]_M^2$ term, we use $[z(s)]_M^2 \le \norm {z(s)} ^2_{-1,M}$, which is a direct consequence of the definition of $\norm {.} ^2_{-1,M}$.
Then, since $\mu_1$ is Lipschitz-continuous,
$$\int_\QtM f^2 \le L_1^2 \int_\QtM  \widehat u^2 |\mathbf v - \widehat v |^2 \le  L_1^2  \norm u _{L^\infty(\QT)}^2 \int_\QtM \mathbf w^2.  $$ 
For the last term, since $\norm {\cdot}_{-1,M} \le \norm {\cdot}_{2,M} \le \norm {\cdot}_{\infty}$ and $\norm {\cdot}_{-1,M} \lesssim \norm {\cdot}_{1,M}$ from Corollary \ref{prop:2} and Proposition \ref{prop:2bis},
\begin{align*}
    \langle \mathbf z(s) ,y(s)\rangle_{-1,M} & \le \norm {\mathbf z(s)}_{-1,M} \norm {y(s)}_{-1,M} \\
    &\le  \norm {\mathbf z(s)}_{-1,M} \norm {   R_1(\mathbf u,\mathbf v)(s) -R_1(\widehat u, \widehat v)(s)}_{-1,M} + \norm {\mathbf z(s)}_{-1,M}\norm {\mathbf r_1(s)}_{-1,M} \\
    & \lesssim \norm {\mathbf z(s)}_{-1,M} \norm { R_1(\mathbf u,\mathbf v)(s) - R_1(\widehat u, \widehat v)(s)  }_{1,M} + \norm {\mathbf z(s)}_{-1,M}\norm {\mathbf r_1(s)}_{\infty},
\end{align*}
with, since $\lambda_1$ is Lipschitz-continuous,
\begin{align*}
    |R_1(\mathbf u,\mathbf v)-R_1(\widehat u, \widehat v)| &= |\mathbf u \lambda_1(\mathbf u,\mathbf v)-\widehat u \lambda_1(\widehat u, \widehat v)| \\
    &\le  | \mathbf u \lambda_1(\mathbf u,\mathbf v)-\widehat u \lambda_1(\mathbf u, \mathbf v) | + |\widehat u \lambda_1(\mathbf u, \mathbf v) - \widehat u \lambda_1(\widehat u, \widehat v) | \\ 
    &\lesssim |\mathbf z \lambda_1(\mathbf u,\mathbf v) | +\widehat u(|\mathbf z| + |\mathbf w|).
\end{align*}
It implies, since $\widehat u$ is smaller than $\norm{u}_\infty$ which is a constant,
\begin{align*}
    &\norm {  R_1(\mathbf u(s),\mathbf v(s)) - R_1(\widehat u(s), \widehat v(s))}_{1,M} \\
    &\hspace{4cm} \lesssim \norm{\lambda_1( \mathbf u(s), \mathbf v(s))}_{2,M}\norm{\mathbf z(s)}_{2,M} +  \norm{\widehat u}_{2,M}(\norm{\mathbf z(s)}_{2,M} + \norm{\mathbf w(s)}_{2,M}) \\
    &\hspace{4cm} \lesssim (1+\norm{\lambda_1( \mathbf u(s), \mathbf v(s))}_{2,M})(\norm{\mathbf z(s)}_{2,M} + \norm{\mathbf w(s)}_{2,M}).
\end{align*}
Thus, using Young's inequality with $\varepsilon >0$ to choose later,
\begin{align*}
    & \langle\mathbf z(s) ,x(s)\rangle_{-1,M}\\
    &\qquad  \lesssim \norm {\mathbf z(s)}_{-1,M} (1+\norm{\lambda_1( \mathbf u(s), \mathbf v(s))}_{2,M})  (\norm{\mathbf z(s)}_{2,M} + \norm{\mathbf w(s)}_{2,M}) +\norm{\mathbf z(s)}_{-1,M}\norm {\mathbf r_1(s)}_{\infty}\\
    &\qquad \lesssim \varepsilon^{-1} ( 1 + \norm{\lambda_1( \mathbf u(s), \mathbf v(s))}_{2,M}^2)\norm {\mathbf z(s)}_{-1,M}^2 + \varepsilon (\norm{\mathbf z(s)}_{2,M}^2 + \norm{\mathbf w(s)}_{2,M}^2) +  \norm {\mathbf z(s)}_{-1,M}\norm {\mathbf r_1(s)}_{\infty} .
\end{align*}

Plugging all these bounds in \eqref{eq:discdual} and gathering terms involving $\norm {\mathbf z(s)}_{-1,M}$, we get
\begin{align}\label{eq:applique_lemme_4}
  &  \norm {\mathbf z(t)} ^2_{-1,M} + \alpha_1\int_{\QtM} \mathbf z^2  \\
  &\qquad \qquad \lesssim \frac {L_{1}^2} {\alpha_1} \norm u _{L^\infty(Q_T)}^2 \int_\QtM \mathbf w^2
    + \varepsilon  \int_\QtM (\mathbf z^2 +\mathbf w^2)
    +g_{1}(t) + h_1(t),\nonumber
\end{align} 
where 
\begin{align}
g_{1}(t)&=  \norm {\mathbf z(0)} ^2_{-1,M} + \int_0^t \norm {\mathbf z(s)} ^2_{-1,M}[\mu_1(\mathbf v(s))]_M \, \dd s  \label{formg1} \\
& \qquad + \int_0^t \left(\varepsilon^{-1}(1 + \norm{\lambda_1( \mathbf u(s), \mathbf v(s))}_{2,M}^2)  \norm {\mathbf z(s)}_{-1,M}^2 + \norm {\mathbf z(s)}_{-1,M} \norm {\mathbf r_1(s)}_{\infty} \right) \,\dd s.  \nonumber
\end{align}
In particular, we have
\begin{equation} \label{ineq:3}
    (\alpha_1 - \varepsilon) \int_{\QtM} \mathbf z^2
    \lesssim \left( \frac {L_{1}^2} {\alpha_1} \norm u _{L^\infty(Q_T)}^2 +\varepsilon \right)\int_{\QtM} \mathbf w^2
    +g_{1}(t) + h_1(t).
\end{equation}
There is a similar inequality for $\mathbf w$:
\begin{equation*}
    (\alpha_2 - \varepsilon) \int_{\QtM} \mathbf w^2
    \lesssim \left( \frac {L_{2}^2} {\alpha_2} \norm v _{L^\infty(Q_T)}^2 +\varepsilon \right) \int_{\QtM} \mathbf z^2
    +g_{2}(t)  + h_2(t).
\end{equation*}
Injecting \eqref{ineq:3} in it to bound $\int_{\QtM} \mathbf z^2$, one gets
\begin{align*}
    (\alpha_2 - \varepsilon) \int_{\QtM} \mathbf w^2
    &\lesssim A_{\varepsilon} \int_{\QtM} \mathbf w^2  +  h_1(t) + g_{2}(t) +  h_2(t),
\end{align*}
where
$$A_{\varepsilon}=\left( \frac {L_{2}^2} {\alpha_2} \norm v _{L^\infty(Q_T)}^2 +\varepsilon \right)  \frac 1 {\alpha_1 - \varepsilon}  \left( \frac {L_{1}^2} {\alpha_1} \norm u _{L^\infty(Q_T)}^2 +\varepsilon \right).$$
By the smallness Assumption  \eqref{eq:petitesse2},
for $\varepsilon$ small enough,
$A_{\varepsilon} <\alpha_2 - \varepsilon$. 
Therefore term $\int_{\QtM} \mathbf w^2$  from the right-hand side can be sent to the left-hand side to get
$$
 \int_{\QtM} \mathbf w^2
    \lesssim 
    g_{2}(t) + g_{1}(t) + h_1(t) + h_2(t).
$$
The choice of $\varepsilon$ depends only on the parameters of the problem and $(u,v)$ (through the dependence on the difference of the two members of the smallness condition), so the constant behind $\lesssim$ depends only on these same objects.
A similar inequality holds for $\int_{\QtM} \mathbf z^2$.  Back to \eqref{eq:applique_lemme_4}, we get by summing it with its analogue for $\mathbf w$ and using the previous bounds on $\int_{\QtM} \mathbf z^2$ and $\int_{\QtM} \mathbf w^2$ that
\begin{equation} \label{eq:400}
    \norm {\mathbf z(t)} ^2_{-1,M} + \int_{\QtM} \mathbf z^2 +\norm {\mathbf w(t)} ^2_{-1,M} + \int_{\QtM} \mathbf w^2
    \lesssim g_{1}(t)  +g_{2}(t) + h_1(t) + h_2(t).
\end{equation}
Recalling from
\eqref{formg1}
the expression of $g_1$ and that $g_2$ has a similar form and setting 
$$\psi(t) := \norm {\mathbf z(t)} ^2_{-1,M} + \norm {\mathbf w(t)} ^2_{-1,M},\qquad  \phi(t) :=  \int_{\QtM} \mu_1(\mathbf v) \mathbf z^2 + \int_{Q_t} \mu_2(\mathbf u) \mathbf w^2,$$
$$a(t) := 1 + [\mu_2(\mathbf u(t))]_M + [\mu_1(\mathbf v(t))]_M + \norm{\lambda_1( \mathbf u(s), \mathbf v(s))}_{2,M}^2 + \norm{\lambda_2( \mathbf u(s), \mathbf v(s))}_{2,M}^2,$$
inequality
 \eqref{eq:400}  rewrites
$$ \psi(t) + \phi(t) \lesssim \psi(0) + \int_0^t [a(s)\psi(s) + (\norm {\mathbf r_1(s)}_{\infty} +\norm {\mathbf r_2(s)}_{\infty}) \psi(s)^{1/2}  ] \dd s +  h_1(t) + h_2(t). $$
Bounding $h_1(t) + h_2(t) $ by $\norm{h_1}_{L^\infty_T} + \norm{h_2}_{L^\infty_T}$ and using non-linear Grönwall lemma (see Lemma \ref{lem:Gronwall}), we get
\begin{align*} &\sup_{t \le T} \psi(t)  \, + \, \phi(T) \}  \\
&\qquad \lesssim e^{c\int_0^T a(s) \, \dd s} \left( \psi(0)+ \norm{h_1}_{L^\infty_T} + \norm{h_2}_{L^\infty_T} +  \norm {\mathbf r_1}_{L^1([0,T];L^\infty(\T_M)}^2 + \norm {\mathbf r_2}_{L^1([0,T];L^\infty(\T_M)}^2  \right).
\end{align*}
It remains to control $a$ and conclude.
Using Hypotheses \ref{hyp}, we   get $\mu_i(x) \lesssim 1 +x$ and $|\lambda_i(u, v)| \le |\lambda^+_i( u, v)| + |\lambda^-_i( u, v)| \le \rho_0 + |\lambda^-_i( u, v)| $,
\begin{align*}
    \int_0^T a(s) \, \dd s &\lesssim \int_0^T  1 + [\mathbf u(s)]_M + [\mathbf v(s)]_M + \norm{\lambda^-_1( \mathbf u(s), \mathbf v(s))}_{2,M}^2 + \norm{\lambda^-_2( \mathbf u(s), \mathbf v(s))}_{2,M}^2 \, \dd s \\
    &\lesssim    \int_\QTM (1+ \mathbf u +\mathbf v +\lambda^-_1( \mathbf u, \mathbf v)^2 + \lambda^-_2( \mathbf u, \mathbf v)^2 ).
\end{align*}
It ends the proof.
\end{proof}

We are now in position to prove Theorem \ref{th:disc_stab} stated in the first section.
\begin{proof}[Proof of Theorem \ref{th:disc_stab}]
 In the case of the semi-discrete system with no perturbations \eqref{eq:discSKT}, $\mathbf m =0$ so  $h_1$ and $h_2$ from Proposition \ref{prop:disc_stab} vanish and Hypothesis \ref{hyp} iii) and Proposition \ref{prop:U1source} yield
\begin{align*}
     \Lambda(T) &=\int_\QTM 1+ \mathbf u +\mathbf v +\lambda^-_1( \mathbf u, \mathbf v)^2 + \lambda^-_2( \mathbf u, \mathbf v)^2 \\
     &\lesssim \int_\QTM 1+ \mathbf u +\mathbf v +\mathbf u \lambda^-_1( \mathbf u, \mathbf v) + \mathbf v\lambda^-_2( \mathbf u, \mathbf v) \lesssim e^{c T}(\norm{\mathbf u_0}_{1,M} +\norm{\mathbf v_0}_{1,M}) \lesssim \exp(cT).
\end{align*}
Thus, the estimate from Proposition \ref{prop:disc_stab} rewrites
$$|||\mathbf z|||_{T,M}^2 + |||\mathbf w|||_{T,M}^2 \lesssim e^{\exp(cT)} (\norm{ \mathbf z_0}_{-1,M}^2 + \norm{\mathbf w_0 }_{-1,M}^2+ \delta_M),$$
where $\delta_M$   converges to $0$ according to Proposition \ref{prop:rToZero}. Note that the constant in front of $\exp(cT)$ has
been by changing the value of $c$.
Then, forthcoming Proposition \ref{prop:interpol} in Appendix ensures that
$$ |||\pi_M(\mathbf u) - u |||_T \lesssim |||\mathbf z|||_{T,M} + \delta_M',$$
 $$ \norm{ \mathbf z_0}_{-1,M} \lesssim \norm{\pi_M(\mathbf u_0) - u_0 }_{H^{-1}(\T)} + \delta_M' ,$$
 where $\delta_M'$ goes to $0$ and  can be absorbed in $\delta_M$. Combining  the  previous inequalities ends  the proof.
\end{proof}.

\section{Convergence of stochastic processes}\label{sec:convsto}

In this chapter, we consider an individual based model, living on the discrete space $\mathbb T_M$. We prove that when the local population scale $N$ goes to infinity,   the stochastic process is close to   the semi discrete system. We obtain  quantitative estimates which are   uniform  with respect to the number of sites $M$. It will prove that  when $M$ and $N$ become large, the individual based model converges to the crossed diffusion system.  The   norm is inherited from the duality estimates. It relies also on the control of fluctuations  through martingales estimates in $H^{-1}$ and large deviations estimates for births and deaths terms. We first justify the existence of stochastic processes and provide preliminary moment estimates. 
We then derive the martingales inequalities (Subsection \ref{Secmartingales}) and study the large deviations (tail estimates) in Subsection 
\ref{sec:nJumps}. We will then combine the results to prove convergence of the stochastic process  in Subsection \ref{secpreuveth3} (Theorem \ref{th:stoc_cv}).
\subsection{Construction of the process and first estimates} \label{sec:wp_stoc}
We consider $p\geq 1$ and assume here that initial conditions satisfy
\begin{equation}
    \label{momunM}
    \E \left[ \norm{ \mathbf U^{M,N}(0)}_{1,M}^p + \norm{ \mathbf V^{M,N}(0)}_{1,M}^p \right] <\infty.
\end{equation}
It  will cover our framework. Indeed, Hypothesis \ref{hyp:bd} $i)$ involved for convergence will ensure that \eqref{momunM} is satisfied for   $p < p_0$.  \\ 

For the proofs, we proceed by localization of processes to keep them bounded and work on a finite state space. More precisely, 
we first define  the  c\`adl\`ag jump Markov process $(\mathbf U^{M,N}, \mathbf V^{M,N})$  satisfying   $\eqref{eq:defPS},\eqref{eq:defPS2}$ until time
$$T^K=\inf\{ t\geq 0 : \norm{\mathbf U^{M,N}(t)}_\infty + \norm{\mathbf V^{M,N}(t)}_\infty \geq K\} \in [0,\infty].$$
Indeed, the process $(\mathbf U^{M,N}, \mathbf V^{M,N})$ remains on  finite state space 
$ [0,K+1]^{2M}$
in the time interval $[0,T^K] \cap \R_+$.
 The process is now simply a jump Markov process on a finite state space and its construction and characterization are classical. Thus, it can be   constructed by iteration, considering the successive jumps, which  provides   a   strong solution   to  $\eqref{eq:defPS}-\eqref{eq:defPS2}$  on the time interval  $[0,T^K] \cap \R_+$. Let us also note that pathwise uniqueness on this time interval immediately holds.\\

 Let us now prove that $T^K$ goes to infinity as $K$ goes to infinity, using the  the control  the growth rate (Hypothesis \ref{hyp} $iii)$) and our moment condition above.
First, we observe that compensating the Poisson random measure in  $\eqref{eq:defPS}$ gives the following  semi-martingale decomposition (see Section II-3 \cite{ikeda}):
\begin{equation}
  \label{decsemU} 
\mathbf U(t) = \mathbf A(t) +  \mathcal M_1(t),
\end{equation}
for $t\in \mathbb R_+\cap T^K$, 
where
\begin{align*}
    \mathbf A^{M,N}(t) &= \mathbf U_0^{M,N} + \frac 1 N \int_0^{t} \int_{\mathbb R_+ \times \mathcal T}  \1_{\rho \le \nu_1(\mathbf U^{M,N}(s^-), \mathbf V^{M,N}(s^-), \theta)} \theta \,  \dd s \dd \rho \dd \theta \\
   &=\mathbf U_0^{M,N} + \int_0^{t} \phi_1(\mathbf U^{M,N}(s), \mathbf V^{M,N}(s)) \dd s   
\end{align*}
and 
\begin{equation}
\label{defphi1}
\phi_1( \mathbf u, \mathbf v) =  \sum_{\theta \in \mathcal T}\frac 1 N  \nu_1(\mathbf u, \mathbf v , \theta) \theta =
    \left[ \Delta_M(\mu_1(\mathbf v)\mathbf u) + R_1(\mathbf u, \mathbf v)\right]
    \end{equation}
and
 $(\mathcal M_1(t\wedge T^K))_{t\geq 0}$ is a martingale defined  for $t\in \mathbb R_+ \cap[0, T^K]$ by  
 \begin{align*}
\mathcal M_1(t)= & \frac 1 N\int_0^t \int_{\mathbb R_+ \times \mathcal T}  \1_{\rho \le \nu_1(\mathbf U(s^-), \mathbf V(s^-), \theta)} \theta \, \widetilde{\N}_1( \dd s, \dd \rho, \dd \theta),
\end{align*}
where $\widetilde{\N}_1$ is the compensated Poisson measure of $\N_1$. This semi-martingale decomposition is the stochastic counterpart 
of the  semi-discrete problem \eqref{eq:discSKTsource} where the source ${\bf m}_1$ is random and given by the local martingale $\mathcal M_1$.

We can now prove the following moment estimates.
\begin{Prop} \label{prop:mom_p}
 Assume  that $b_i-d_i$ is bounded and $b_i$ is globally Lipschitz for $i\in \{1,2\}$. Let $p\geq 1$ and assume also  that \eqref{momunM} holds.
Then there exists $c \in \mathbb R$ such that for all $K \in \mathbb N$ and $M,N\geq 1$  and  $t\geq 0$, 
    $$ \E \left[ \norm{\mathbf U^{M,N} (t\wedge T^K)}_{1,M}^p \! + \norm{\mathbf V^{M,N} (t\wedge T^K)}_{1,M}^p \right] \lesssim e^{ct} \left(\! \E \left[ \norm{ \mathbf U^{M,N}(0)}_{1,M}^p\! + \! \norm{ \mathbf V^{M,N}(0)}_{1,M}^p \right]+ \frac {t} {N} \! \right) \! .$$
\end{Prop}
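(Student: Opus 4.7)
The plan is to exploit that the process is non-negative so $\norm{\mathbf U^{M,N}}_{1,M}$ coincides with the scalar mean $\bar U(t) := [\mathbf U^{M,N}(t)]_M$, and likewise for $\bar V(t)$. Pairing the semimartingale decomposition \eqref{decsemU} against the constant function $\mathbf 1\in\ell^2(\T_M)$ and using that the motion jumps $\mathbf e_{j\pm 1}-\mathbf e_j$ are mean-zero yields
\[
\bar U(t) = \bar U(0) + \int_0^t [R_1(\mathbf U^{M,N}(s), \mathbf V^{M,N}(s))]_M\,\dd s + [\mathcal M_1(t)]_M,
\]
where $[\mathcal M_1]_M$ is a scalar pure-jump martingale whose jumps (of size $\pm 1/(NM)$) come only from birth and death events of species~$1$, and similarly for $\bar V$. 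Since the stopped process is confined to $[0,K+1]^{2M}$ on $[0,t\wedge T^K]$, all expressions below are well-defined and the stopped martingale is a true martingale.

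I would then apply Dynkin's formula to $f(\mathbf U,\mathbf V)=\bar U^p+\bar V^p$. As motions preserve $\bar U$ and $\bar V$, the generator reduces to birth--death contributions: with $h := 1/(NM)$,
\[
\mathcal L\bar U^p = NM[Ub_1]_M\bigl[(\bar U+h)^p-\bar U^p\bigr] + NM[Ud_1]_M\bigl[(\bar U-h)^p-\bar U^p\bigr].
\]
A second-order Taylor expansion splits this into a first-order drift $p\bar U^{p-1}[R_1(\mathbf U^{M,N},\mathbf V^{M,N})]_M$, bounded in absolute value by $Cp\,\bar U^p$ thanks to $|\lambda_1|\leq \|b_1-d_1\|_\infty$, plus a remainder of order $\tfrac{C_p}{NM}(\bar U+h)^{p-2}[U(b_1+d_1)]_M$. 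The Lipschitz assumption on $b_1$ combined with boundedness of $b_1-d_1$ yields $b_1+d_1\le C(1+u+v)$, so $[U(b_1+d_1)]_M \le C(\bar U + [U^2]_M + [UV]_M)$.

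The main step, and the real obstacle, is to bound this remainder \emph{uniformly in $K$}. A naive estimate $[U^2]_M\le\|U\|_\infty\bar U\le K\bar U$ would make the bound blow up with $K$. The clean substitute relies on the elementary $\ell^1$--$\ell^2$ inequality for non-negative functions,
\[
[U^2]_M\le M\,\bar U^2, \qquad [UV]_M\le M\,\bar U\,\bar V,
\]
both following from $\sum_j U_j^2\le(\sum_j U_j)^2$ when all $U_j\ge0$. This converts the remainder into $\tfrac{C}{N}(1+\bar U^2+\bar V^2)$, and a further application of Young's inequality absorbs $(\bar U+h)^{p-2}(1+\bar U^2+\bar V^2)$ into $1+\bar U^p+\bar V^p$, yielding $\mathcal L(\bar U^p+\bar V^p)\le c_p(\bar U^p+\bar V^p)+\tfrac{C_p}{N}$.

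Setting $\Phi(t):=\E[\bar U(t\wedge T^K)^p+\bar V(t\wedge T^K)^p]$, taking expectations in Dynkin's formula yields the linear differential inequality $\Phi'(t)\le c_p\Phi(t)+C_p/N$, which Grönwall integrates to $\Phi(t)\le e^{c_pt}(\Phi(0)+C_p t/N)$, matching the claim uniformly in $K,M,N$. The endpoint $p=1$ is immediate (the second-order remainder vanishes and the martingale has zero mean, so the $t/N$ term is not actually needed). For $p\in(1,2)$, where $x\mapsto x^{p-2}$ is singular at the origin, I would run the same argument with the smoothed Lyapunov function $(1+\bar U)^p+(1+\bar V)^p$ whose second derivative is uniformly bounded on $\R_+$, and close the inequality by interpolating the cross term $\bar U^2$ via the already-obtained $p=2$ moment bound.
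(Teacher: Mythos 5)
Your proposal is correct and follows essentially the same route as the paper: project onto $\mathbf 1$ so that mobility jumps drop out, track the $p$-th power of the $\ell^1$-norm via the generator (Dynkin is equivalent to the paper's explicit Poisson-integral computation), control the quadratic remainder uniformly in $K$ through the key non-negativity bound $\|\mathbf U\|_\infty\le M\|\mathbf U\|_{1,M}$ (equivalently your $[U^2]_M\le M\bar U^2$), and close with Young and Grönwall. The only cosmetic difference is that the paper invokes the elementary inequalities of its Proposition \ref{prop:ineg_p} directly in place of your Taylor expansion and your $p\in(1,2)$ regularization; the singularity at $\bar U=0$ is in fact harmless because the transition rates vanish there, so the paper's unregularized version already goes through.
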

\begin{proof}
Fix $K \in \mathbb N$ and $p \ge 1$. We take the scalar product of SDE \eqref{eq:defPS} with the constant function $\mathbf 1$ and for any $t\leq T^K$,  
$$\norm {\mathbf U (t) }_{1,M} = \norm {\mathbf U_0}_{1,M} + \frac 1 N \int_0^{t} \int_{\mathbb R_+ \times \mathcal T}  \1_{\rho \le \nu_1(\mathbf U(s^-), \mathbf V(s^-), \theta)}\langle \mathbf 1 , \theta \rangle\, \N_1( \dd s, \dd \rho, \dd \theta).$$
For a jump of $\norm {\mathbf U (t) }_{1,M}$ given by $\frac 1 N\langle \mathbf 1 , \theta \rangle$, the corresponding jump of $\norm {\mathbf U (t) }_{1,M}^p$ is
$$( \norm {\mathbf U (t^-) }_{1,M} +\frac 1 N \langle \mathbf 1, \theta \rangle)^p -\norm {\mathbf U (t^-)}_{1,M}^p,$$
thus we have
\begin{align*}
    \norm {\mathbf U (t) }_{1,M}^p \! &= \norm {\mathbf U_0}_{1,M}^p \\
    & + \int_0^{t} \! \int_{\mathbb R_+ \times \mathcal T} \1_{\rho \le \nu_1(\mathbf U(s^-), \mathbf V(s^-), \theta)} [( \norm {\mathbf U(s^-) }_{1,M} +\frac 1 N \langle \mathbf 1, \theta \rangle)^p -\norm {\mathbf U(s^-)}_{1,M}^p] \, \N_1( \dd s, \dd \rho, \dd \theta).
    \end{align*}
Taking the expectation using that $T^K$ is a stopping time for the natural filtration, 
\begin{multline*}
    \E\left[\norm {\mathbf U (t\wedge T^K) }_{1,M}^p \right] \\
    = \E \left[\norm {\mathbf U_0}_{1,M}^p +   \int_0^{t\wedge T^K} \sum_{\theta \in  \mathcal T} \left(\left( \norm {\mathbf U (s)}_{1,M}  + \frac 1 N \langle \mathbf 1 ,\theta\rangle\right)^p - \norm {\mathbf U (s)}_{1,M}^p \right) \nu_1(\mathbf U(s), \mathbf V(s) , \theta)\, \dd s \right]. 
\end{multline*}
The mobility transitions have zero contribution, i.e. $\langle 1, \mathbf e_{i+\epsilon}-\mathbf e_{i}\rangle =0$. Then   for any $\mathbf u, \mathbf v \in \ell^2(\T_M)$,
\begin{multline*}
    \sum_{\theta \in  \mathcal T} \left(\left( \norm {\mathbf u}_{1,M}  + \frac 1 N \langle \mathbf 1 ,\theta\rangle\right)^p - \norm {\mathbf u}_{1,M}^p \right) \nu_1(\mathbf u, \mathbf v , \theta) \\
    =\sum_{j=1}^M  N u_j \Big[\left(\left( \norm {\mathbf u}_{1,M}  + \frac 1 {MN} \right)^p - \norm {\mathbf u}_{1,M}^p \right) b_1( u_j, v_j) \\
    +\left(\left( \norm {\mathbf u}_{1,M} - \frac 1 {MN} \right)^p - \norm {\mathbf u}_{1,M}^p \right) d_1( u_j, v_j) \Big]. 
\end{multline*}
We then use the inequalities $(x+ \varepsilon)^p - x^p \le x^p - (x- \varepsilon)^p + C_p \varepsilon^2 x^{p-2}$ and $x^p - (x- \varepsilon)^p \le C_p \varepsilon x^{p-1}$, which are given in  Proposition \ref{prop:ineg_p} of forthcoming Appendix, together with the upper bounds of  $b_1-d_1$  and  $b_1$ coming from assumptions. We  get \begin{align*}
&   \sum_{\theta \in  \mathcal T} \left(\left( \norm {\mathbf u}_{1,M}  + \frac 1 N \langle \mathbf 1 ,\theta\rangle\right)^p - \norm {\mathbf u}_{1,M}^p \right) \nu_1(\mathbf u, \mathbf v , \theta) \\
    &\leq \sum_{j=1}^M  N u_j \left[\left( \norm {\mathbf u}_{1,M}^p -\left( \norm {\mathbf u}_{1,M}  - \frac 1 {MN} \right)^p  \right)(b_1 - d_1)( u_j, v_j) + C_p \frac 1 {(MN)^2}  \norm {\mathbf u}_{1,M}^{p-2} b_1( u_j, v_j) \right] \\
    &\lesssim  \sum_{j=1}^M  N u_j \left[\frac {\rho_0} {MN}  \norm {\mathbf u}_{1,M}^{p-1} +  \frac 1 {(MN)^2}  \norm {\mathbf u}_{1,M}^{p-2}(1+ u_j + v_j) \right]\\
    &\lesssim \frac 1 M \sum_{j=1}^M  u_j \left[  \norm {\mathbf u}_{1,M}^{p-1}  +  \frac 1 {MN}  \norm {\mathbf u}_{1,M}^{p-2}(1+  M\norm {\mathbf u}_{1,M} +  M\norm {\mathbf v}_{1,M})\right]  \\
    &\le \norm {\mathbf u}_{1,M}\left[  \norm {\mathbf u}_{1,M}^{p-1}  +  \frac 1 {N}  \norm {\mathbf u}_{1,M}^{p-2}(1+  \norm {\mathbf u}_{1,M}+  \norm {\mathbf v}_{1,M}) \right].
\end{align*}
Using Young's inequality,
 $\norm {u}_{1,M}^{p-1}(1+  \norm {u}_{1,M}+  \norm {v}_{1,M}) \lesssim  1 +  \norm {u}_{1,M}^{p} + \norm {v}_{1,M}^{p}$, we obtain
\begin{align*}
& \E[\norm {\mathbf U (t\wedge T^K)}_{1,M}^p]\\
& \quad \le \E[\norm {\mathbf U (0)}_{1,M}^p] + C \int_0^t  \E[\norm {\mathbf U (s\wedge T^K)}_{1,M}^{p}] + \frac 1 N \left( 1+\E[\norm {\mathbf V (s\wedge T^K)}_{1,M}^{p}]\right)  \dd s,
\end{align*}
for some $c>0$.
Going through similar arguments for estimating $\mathbf V^K$, we sum the two estimates and get
$$\psi(t) \le \psi(0) + c \int_0^t \left(\psi(s) + \frac 1 {N} \right) \, \dd s ,$$
where
$$ \psi(t)=\E[\mathbf U (t\wedge T^K)^{p}] + \E[\mathbf V (t\wedge T^K)^{p}]. $$
This implies, by Gr\"onwall's lemma, that
$$\psi(t) \le e^{ct} \left(\psi(0)+ \frac {ct} {N} \right),$$
that is to say
$$ \E \left[ \norm{ \mathbf U(t\wedge T^K)}_{1,M}^p + \norm{ \mathbf V(t\wedge T^K)}_{1,M}^p \right] \le e^{ct} \left( \E \left[ \norm{ \mathbf U(0)}_{1,M}^p + \norm{ \mathbf V(0)}_{1,M}^p \right]+ \frac {ct} {N} \right).$$
\end{proof}

The previous proposition ensures that 
$$\lim_{K\rightarrow \infty }T^K=\infty \quad \text{a.s.},$$
since  for any $t\geq 0$ and a fixed initial condition with bounded first  moment,
$$\P(T^K\leq t) \leq \frac{ \E \left[ 1_{T^K\leq t} \left(\norm{\mathbf U^{M,N} (t\wedge T^K)}_{1,M} + \norm{\mathbf V^{M,N} (t\wedge T^K)}_{1,M}\right) \right]}{K}\lesssim \frac{Ce^{ct}}{K}$$
and the right hand side goes to $0$ as $K$ tends to infinity. It allows to construct the process $(\mathbf U^{M,N}, \mathbf V^{M,N})$ for any time $t\in \R_+$, and 
 $(\mathbf U^{M,N}, \mathbf V^{M,N})$ is the unique strong solution of $\eqref{eq:defPS}$ in $\mathbb D(\R^+,\mathbb N^{2M})$. Let us observe  that existence and pathwise uniqueness can then be extended  to any finite initial conditions by truncation procedure on the initial conditions, while we we are  using the moment estimates obtained  above for the proofs  of convergence.

\subsection{Martingales inequalities}
\label{Secmartingales}
We have shown in the previous part   that $T^K\rightarrow \infty$ a.s. as $K\rightarrow \infty$. Then   for $i\in\{1,2\}$, \begin{equation}
\label{expressM1}
    \mathcal M_i^{M,N}(t) = \frac 1 N \int_0^{t} \int_{\mathbb R_+ \times \mathcal T}  \1_{\rho \le \nu_i(\mathbf U^{M,N}(s^-), \mathbf V^{M,N}(s^-), \theta)} \theta \,  \widetilde \N_i (\dd s, \dd \rho, \dd \theta)
    \end{equation}
is   a local  martingale well defined for any $t\in \R_+$.
Reading the right-hand side of the estimate in Proposition \ref{prop:disc_stab}, we see that the approximation results will rely on bounds for the quantities ${ h}^M$. We denote ${ H}^{M,N}$ the stochastic counterpart, arising when the source term
 $\mathbf m^M$ is  the local martingale $\mathcal M^{M,N}$. Thus, $H_1^{M,N}$ is 
 given by  the following semi-martingale decomposition
\begin{align}\label{eq:defEstoc}
    H_1^{M,N}(t) &= 2 \mathcal Q_1^{M,N}(t)+\sum_{0 \le s \le t} \norm {\mathcal M_1^{M,N}(s) - \mathcal M_1^{M,N}(s^-)}_{-1,M}^2,
\end{align} 
where $\mathcal Q_1^{M,N}$ is the following local martingale
\begin{align}
\mathcal Q_1^{M,N}(t)&= -\int^t_0 \langle \mathbf Z^{M,N}(s) , \phi_1(\mathbf U^{M,N}(s), \mathbf V^{M,N}(s)) \rangle_{-1,M}\, \dd s \nonumber \\
& \hspace{5cm} +\sum_{0 \le s \le t} \langle \mathbf Z^{M,N}(s^-), \mathcal M_1^{M,N}(s) - \mathcal M_1^{M,N}(s^-) \rangle_{-1,M} \nonumber\\
    &= \frac 1 N \int_0^t\int_{\mathbb R_+ \times \mathcal T}  \1_{\rho \le \nu_1(\mathbf U^{M,N}(s^-), \mathbf V^{M,N}(s^-), \theta)} \langle \mathbf Z^{M,N}(s^-),\theta \rangle_{-1,M} \, \widetilde \N_1( \dd s, \dd \rho, \dd \theta)
    \label{Q1mPP}
    \end{align}
    and $\phi_1$ has been defined in \eqref{defphi1} and 
$$\mathbf Z^{M,N}:= \mathbf U^{M,N}-\widehat u^M.$$ 
The local martingale $ \mathcal Q_2^{M,N}$ and semimartingale $H_2^{M,N}$ for the second species are defined similarly.
We  prove now that these local martingales  are square integrable martingales whose moments  are bounded by powers of $1/N$ in $H^{-1}$.
\begin{Prop} \label{prop:plusDeMart} 
For  $i\in \{1,2\}$,  assume  that $b_i-d_i$ is bounded and $b_i$ and $d_i$ are globally Lipschitz.\\
i)  Assume that   initial conditions satisfy \eqref{momunM} with $p=2$. Then, for each $i\in \{1,2\}$,  $\mathcal M_i^{M,N}$ is a square-integrable martingale in $(\R_+)^{2M}$ and  for any $T\geq 0$ and $M,N\geq 1$,
    $$\E \left[\sup_{t \le T} \norm{\mathcal M_i^{M,N}(t)}_{-1,M}^2 \right] \lesssim \frac 1 N \E \left[\int_\QTM 1 + (\mathbf U^{M,N})^2 +(\mathbf V^{M,N})^2\right].$$
ii) Let $p\geq 2$ and assume that initial conditions satisfy \eqref{momunM}. Then, for each  $i\in \{1,2\}$, 
$\mathcal Q_i^{M,N}$ 
    is a square-integrable martingale in $\R$ and for any $T\geq 0$, $M,N\geq 1$,
  for any $T\geq 0$,
$$ \E \left[\sup_{t \le T} \vert \mathcal Q_i^{M,N} \vert (t)^{p/2} \right]  \lesssim    \left(\frac {1+T} N \right)^{\frac p 4}\E[1 + |||\mathbf U^{M,N}|||_{T,M}^{p} + |||\mathbf V^{M,N}|||_{T,M}^{p}] .$$
\end{Prop}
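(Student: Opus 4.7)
The plan is to view both $\mathcal{M}_i^{M,N}$ and $\mathcal{Q}_i^{M,N}$ as compensated Poisson stochastic integrals (hence locally square-integrable martingales) and to work first with their versions stopped at $T^K$, where all quantities have finite moments; the final estimates for the non-stopped processes will then follow by Fatou's lemma and the moment bounds of Proposition~\ref{prop:mom_p}. The heart of the argument is the explicit computation of the predictable bracket, combined with the jump-size bounds of Proposition~\ref{prop:jumpSize}: $\|\mathbf{e}_j\|_{-1,M}^2\lesssim 1/M$ for birth/death jumps and $\|\mathbf{e}_{j\pm 1}-\mathbf{e}_j\|_{-1,M}^2\lesssim 1/M^3$ for motion jumps.

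For part (i), the Hilbert-valued predictable bracket formula gives
\[
\mathbb{E}\bigl[\|\mathcal{M}_i(T)\|_{-1,M}^2\bigr] = \frac{1}{N^2}\,\mathbb{E}\Bigl[\int_0^T\sum_{\theta\in\mathcal{T}}\nu_i(\mathbf{U}^{M,N},\mathbf{V}^{M,N},\theta)\,\|\theta\|_{-1,M}^2\,ds\Bigr].
\]
Motion jumps (rate $\sim NM^2\mathbf{U}\mu_i(\mathbf{V})$, norm squared $\sim 1/M^3$) contribute a term of order $\frac{1}{N}[\mathbf{U}\mu_i(\mathbf{V})]_M$, and birth/death jumps (rate $\sim N\mathbf{U}(b_i+d_i)$, norm squared $\sim 1/M$) contribute $\frac{1}{N}[\mathbf{U}(b_i+d_i)]_M$, so the $M$-powers cancel exactly. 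Using the Lipschitz growth of $\mu_i$ together with $d_i\le b_i+|b_i-d_i|\lesssim 1+u+v$, both are bounded by $\frac{1}{N}(1+\|\mathbf{U}\|_{2,M}^2+\|\mathbf{V}\|_{2,M}^2)$. Doob's $L^2$ inequality, applied coordinate by coordinate in the eigenbasis of $-\Delta_M$ and summed (equivalently, the Hilbertian Burkholder inequality), then upgrades this to the required control on the expected supremum.

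For part (ii), Cauchy-Schwarz in $\langle\cdot,\cdot\rangle_{-1,M}$ inside the integrand immediately yields
\[
[\mathcal{Q}_i](T)\le \sup_{s\le T}\|\mathbf{Z}^{M,N}(s)\|_{-1,M}^2\,\cdot[\mathcal{M}_i](T),
\]
and the Burkholder-Davis-Gundy inequality gives $\mathbb{E}[\sup_t|\mathcal{Q}_i|^{p/2}]\lesssim \mathbb{E}[[\mathcal{Q}_i]^{p/4}]$ for $p\ge 2$. A further Cauchy-Schwarz on the expectation splits this as $\mathbb{E}[\sup\|\mathbf{Z}\|_{-1,M}^p]^{1/2}\cdot\mathbb{E}[[\mathcal{M}_i]^{p/2}]^{1/2}$. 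The first factor is controlled via $\mathbf{Z}=\mathbf{U}-\widehat{u}$ with $\sup\|\widehat{u}\|_{-1,M}\lesssim\|u\|_{L^\infty(Q_T)}$, yielding $\lesssim \mathbb{E}[1+|||\mathbf{U}|||_{T,M}^p]^{1/2}$. The second factor is $\lesssim N^{-p/4}\bigl(T^{p/4}+\mathbb{E}[|||\mathbf{U}|||_{T,M}^p+|||\mathbf{V}|||_{T,M}^p]^{1/2}\bigr)$, obtained by raising the bracket bound from (i) to the power $p/2$ and using $(T+a+b)^{p/2}\lesssim T^{p/2}+a^{p/2}+b^{p/2}$. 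Finally, the trivial inequality $x^{1/2}\le 1+x$ converts the residual cross term $T^{p/4}\mathbb{E}[1+|||\mathbf{U}|||^p]^{1/2}$ into $T^{p/4}\mathbb{E}[1+|||\mathbf{U}|||^p]$, producing the announced $(1+T)^{p/4}/N^{p/4}$ prefactor.

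The main obstacle lies in the bookkeeping of part~(ii): one must order the Cauchy-Schwarz and Young-type steps carefully so that only a single factor $T^{p/4}$ (rather than $T^{p/2}$) survives on the right-hand side, and in particular the $T$-dependence coming from $\widehat{u}$ inside $\sup\|\mathbf{Z}\|_{-1,M}$ must be paired with the temporal part of $[\mathcal{M}_i]$ in the right way. A secondary technical point is to justify uniform-in-$K$ integrability before letting $K\to\infty$; this is handled through Proposition~\ref{prop:mom_p} applied with $p$ large enough, combined with Corollary~\ref{cor:BoundTM} to promote the $1$-norm moment bounds into moment bounds on $|||\mathbf{U}|||_{T,M}$ and $|||\mathbf{V}|||_{T,M}$.
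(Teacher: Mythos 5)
Your proof is correct and follows the paper's strategy for part~(i) almost word for word: both use the $H^{-1}$ jump-size bounds of Proposition~\ref{prop:jumpSize}, note that the $M$-powers in the motion and birth/death contributions cancel against the rate scalings to give the same $\tfrac{1}{N}(1+\|\mathbf U\|_{2,M}^2+\|\mathbf V\|_{2,M}^2)$ integrand, and finish with Doob's inequality applied to the submartingale $\|\mathcal M_i\|_{-1,M}$.

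For part~(ii) you take a mildly different bookkeeping route. The paper never splits the expectation by Cauchy--Schwarz: after the pathwise Cauchy--Schwarz $\langle\mathcal Q_i\rangle(t)\le\sup_s\|\mathbf Z(s)\|_{-1,M}^2\cdot\langle\mathcal M_i\rangle(t)$, it immediately absorbs the product into a single polynomial bound $\langle\mathcal Q_i\rangle(t)\lesssim \tfrac{1+t}{N}\bigl(1+|||\mathbf U|||_{t,M}^4+|||\mathbf V|||_{t,M}^4\bigr)$ \emph{pathwise} (via elementary AM--GM), raises it to the power $p/4$, and takes a single expectation before invoking Burkholder--Davis--Gundy. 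You instead pass to expectations first, splitting $\mathbb E[\langle\mathcal Q_i\rangle^{p/4}]\le\mathbb E[\sup\|\mathbf Z\|^p]^{1/2}\mathbb E[\langle\mathcal M_i\rangle^{p/2}]^{1/2}$ and then recombining the two factors with $x^{1/2}\le 1+x$ and an AM--GM on the cross term. Both routes reach the same $(1+T)^{p/4}N^{-p/4}\,\mathbb E[1+|||\mathbf U|||^p+|||\mathbf V|||^p]$ bound; the paper's pathwise version is a little cleaner because it avoids having to reassemble a product of square roots of expectations into a single expectation, which is exactly the "bookkeeping" delicacy you flag. Your remark on uniform integrability across the truncations $T^K$ (using Proposition~\ref{prop:mom_p} together with Corollary~\ref{cor:BoundTM} to move from $\|\cdot\|_{1,M}$ moments to $|||\cdot|||_{T,M}$ moments) matches the paper's treatment of why $\mathcal M_i$ and $\mathcal Q_i$ are true square-integrable martingales rather than merely local ones.
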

\begin{proof} Let us first control the jumps  of the local martingales in $H^{-1}$. Jumps can be birth, deaths (BD) and motion (M) and we recall that they are given by $\mathcal T$ : 
$$\mathcal T = \mathcal T_{BD} \cup \mathcal T_{M}, \quad  \mathcal T_{BD} = \bigcup_{j=1}^M \left\{\frac 1 N \mathbf e_j, -\frac 1 N \mathbf e_j\right\}, \quad \mathcal T_{M} = \bigcup_{j=1}^M \left\{\frac 1 N (\mathbf e_{j+1} - \mathbf e_j), \frac 1 N (\mathbf e_{j-1} - \mathbf e_j)\right\}.$$
We first deal with births and deaths and use the first part of Proposition \ref{prop:jumpSize} to for  $\norm{ \theta}_{-1,M}$ and get
\begin{align*}
    &\int_{\mathbb R_+ \times \mathcal T_{BD}} \1_{\rho \le \nu_1(\mathbf u, \mathbf v, \theta)} \norm{\frac 1 N  \theta}^2_{-1,M} \,  \dd \rho \dd \theta \\
    &\qquad =\frac 1 {N^2M}\int_{\mathbb R_+ \times \mathcal T_{BC}} \1_{\rho \le \nu_1(\mathbf u, \mathbf v, \theta)} \,  \dd \rho \dd \theta  = \frac 1 {N^2M}\sum_{\theta \in \mathcal T_{BC}}\nu_1(\mathbf u, \mathbf v, \theta) \\
    &\qquad = \frac 1 {N^2M}\sum_{j=1}^M Nu_j b_1(u_j, v_j) + Nu_j d_1(u_j, v_j)
    \lesssim\frac 1 {NM}\sum_{j=1}^M 1 + u_j^2 + v_j^2 = \frac 1 {N}(1 + \norm{\mathbf u}_{2,M}^2 +\norm{\mathbf v}_{2,M}^2),
\end{align*}
the last inequality coming from the fact that $b_1$ and $d_1$ are Lipschitz and Young's inequality.
Similarly, the second part of   Proposition \ref{prop:jumpSize} yields a control of the motion part :
\begin{align*}
    &\int_{\mathbb R_+ \times \mathcal T_{\mu}} \1_{\rho \le \nu_1(\mathbf u, \mathbf v, \theta)} \norm{\frac 1 N  \theta}^2_{-1,M} \,  \dd \rho \dd \theta \\
    &\qquad =\frac 1 {N^2M^3}\int_{\mathbb R_+ \times \mathcal T_{\mu}} \1_{\rho \le \nu_1(\mathbf u, \mathbf v, \theta)} \,  \dd \rho \dd \theta  = \frac 1 {N^2M^3}\sum_{\theta \in \mathcal T_{\mu}}\nu_1(\mathbf u, \mathbf v, \theta)
    = \frac 1 {N^2M^3}\sum_{j=1}^M NM^2u_j \mu_1(v_j) \\
    &\qquad \lesssim \frac 1 {NM}\sum_{j=1}^M u_j(1 + v_j)
    \lesssim\frac 1 {NM}\sum_{j=1}^M 1 + u_j^2 + v_j^2 = \frac 1 {N}(1 + \norm{\mathbf u}_{2,M}^2 +\norm{\mathbf v}_{2,M}^2).
\end{align*}
Proceeding similarly for the second species and 
assembling these two  bounds for the jumps yields
\begin{equation}\label{eq:Jbound2}
 \int_{\mathbb R_+ \times \mathcal T} \1_{\rho \le \nu_i(\mathbf u, \mathbf v, \theta)} \norm{\frac 1 N  \theta}^2_{-1,M} \,  \dd \rho \dd \theta \lesssim \frac 1 {N}(1 + \norm{\mathbf u}_{2,M}^2 +\norm{\mathbf v}_{2,M}^2) 
\end{equation}
for any $\mathbf u,\mathbf v \in \ell^2(\T_M)$ and $i\in \{1,2\}$.
Replacing $(\mathbf u,\mathbf v)$ by $(\mathbf U(s), \mathbf V(s))$ in this expression and   integrating in time and taking the expectation, we obtain  for any  $M,N\geq 1$ and $t\in [0,T]$,
\begin{equation} \label{eq:1534}
   \E \left[\int_0^t \int_{\mathbb R_+ \times \mathcal T} \norm{\frac 1 N \1_{\rho \le \nu_i(\mathbf U(s), \mathbf V(s), \theta)} \theta}^2_{-1,M} \,  \dd \rho \dd \theta \right] \lesssim \frac 1 N \E \left[\int_\QTM 1 + \mathbf U^2 +\mathbf V^2\right].
\end{equation}
Moreover,
$$\int_\QTM \mathbf U^2 +\mathbf V^2 = \int_0^T \norm{\mathbf U(s)}_{2,M}^2 + \norm{\mathbf V(s)}_{2,M}^2  \, \dd s \lesssim M\int_0^T \norm{\mathbf U(s)}_{1,M}^2 + \norm{\mathbf V(s)}_{1,M}^2  \, \dd s,$$
and we know from we know from Proposition  \ref{prop:mom_p} that the second moment on the left hand side is finite. It ensures that for each $i\in \{1,2\}$,
$$\norm{\mathcal M_i(t)}_{-1,M}^2-\int_0^{t} \int_{\mathbb R_+ \times \mathcal T}
    \norm{\frac 1 N \1_{\rho \le \nu_i(\mathbf U(s), \mathbf V(s), \theta)} \theta}^2_{-1,M} \,\dd s \dd \rho \dd \theta$$
    is a real martingale  for any $t\geq 0$ and $\mathcal M_i$ is a square-integrable martingale in $(\R_+)^{2M}$.
We  work here with $\norm{\cdot}_{-1,M}$ rather than $\norm{\cdot}_{2,M}$, which doesn't change   the integrability properties of $\mathcal M_i$ since $\R_+^{2M}$ is   finite-dimensional and refer to Section II-3 of \cite{ikeda}.
Using \eqref{eq:1534}, we can bound the second moment of $\mathcal M_i(t)$:
$$\E \left[\norm{\mathcal M_i(t)}_{-1,M}^2 \right] \lesssim \frac 1 N \E \left[\int_\QTM 1 + \mathbf U^2 +\mathbf V^2\right].$$
Since $\mathcal M_i$ is a martingale, $\norm{\mathcal M_i(t)}_{-1,M}$ is a submartingale and Doob's inequality  proves  part $i)$ of the lemma.\\

Let us now consider the local martingale $\mathcal Q_1$.
The domination of $\mathbf Z$ by ${\bf U}$ and expression \eqref{Q1mPP} 
 ensure that $\mathcal Q_1$ is a square-integrable martingale whose quadratic variation is
\begin{align*}
    \langle \mathcal Q_1 \rangle (t) 
     &=  \int_0^t\int_{\mathbb R_+ \times \mathcal T}  \1_{\rho \le \nu_1(\mathbf U(s), \mathbf V(s), \theta)}\left\langle \mathbf Z(s), \frac 1 N \theta   \right\rangle_{-1,M}^2 \,\dd s \dd \rho \dd \theta. 
     \end{align*}
  More precisely, using successively Cauchy-Schwarz inequality and  \eqref{eq:Jbound2}
   \begin{align*}
   \langle \mathcal Q_1 \rangle (t)  &\le \sup_{0 \le s\le t}\norm{\mathbf Z(s)}_{-1,M}^2  \int_0^t \int_{\mathbb R_+ \times \mathcal T}  \1_{\rho \le \nu_1(\mathbf U(s), \mathbf V(s), \theta)} \norm{ \frac 1 N \theta}_{-1,M}^2 \, \dd \rho \dd \theta  \,\dd s \\
    &\lesssim \sup_{0 \le s\le t}(1+\norm{\mathbf U(s)}_{-1,M}^2 )\int_0^t\frac {1} N (1 +\norm {\mathbf U (s)}^2_{2,M} +\norm {\mathbf V (s)}^2_{2,M}) \,\dd s, \\
    &\lesssim \frac 1 N (1 + |||\mathbf U|||_{t,M}^2)(t+|||\mathbf U|||_{t,M}^2 + |||\mathbf V|||_{t,M}^2) \lesssim (1+ t)(1 + |||\mathbf U|||_{t,M}^4 + |||\mathbf V|||_{t,M}^4).
\end{align*}
   Raising to the power $p/4$ and taking the expectation give
$$\E[\langle \mathcal Q_1 \rangle (T)^{\frac p 4}] \lesssim \frac 1 {N^{\frac p 4}}(1+ T)^{\frac p 4}\E[1 + |||\mathbf U|||_{T,M}^{p} + |||\mathbf V|||_{T,M}^{p}]. $$
Since $\mathcal Q_1$ is a  martingale starting from $0$,   Burkholder-Davis-Gundy inequality yields the second part of the proposition.
\end{proof}

We can now obtain the expected estimations on $H_i^{M,N}$, allowing for  convergence of martingale pertubations to $0$ as $N$ tends to infinity, uniformly  with respect to $M$.
\begin{Lemme} \label{lem:boundE1}   For  $i\in \{1,2\}$,  assume  that $b_i-d_i$ is bounded and $b_i$ and $d_i$ are globally Lipschitz.\\
Let $p\ge 2$ and assume that the initial conditions satisfy \eqref{momunM}.
Then  for any $T\geq 0$ and  $N,M\geq 1$,
$$ \E \left[\sup_{t \le T} |H_i^{M,N}(t)|^{p/2} \right] \lesssim \left(\frac {1+T^2} N \right)^{\frac p 4}\E[1 + |||\mathbf U^{M,N}|||_{T,M}^{p} + |||\mathbf V^{M,N}|||_{T,M}^{p}]. $$
\end{Lemme}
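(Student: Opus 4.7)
The starting point is the decomposition $H_i^{M,N}(t) = 2\mathcal{Q}_i^{M,N}(t) + J_i^{M,N}(t)$ coming from \eqref{eq:defEstoc}, where $J_i^{M,N}(t) := \sum_{0\le s\le t}\|\Delta \mathcal{M}_i^{M,N}(s)\|_{-1,M}^2$ collects the squared $H^{-1}$-jumps of $\mathcal{M}_i^{M,N}$. The first term is already controlled by Proposition~\ref{prop:plusDeMart}\,ii), which yields the expected $((1+T)/N)^{p/4}$ rate; the whole game is therefore to show that the non-negative, non-decreasing process $J_i^{M,N}$ enjoys a bound of at least the same quality.

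To that end, I would further decompose $J_i^{M,N}(t) = \widetilde J_i^{M,N}(t) + \overline M_i^{M,N}(t)$, where the predictable compensator reads
\[ \widetilde J_i^{M,N}(t) = \frac{1}{N^2}\int_0^t\sum_{\theta\in\mathcal T}\nu_i(\mathbf U^{M,N}(s),\mathbf V^{M,N}(s),\theta)\,\|\theta\|_{-1,M}^2\,\dd s, \]
and $\overline M_i^{M,N}$ is a square-integrable compensated Poisson martingale. The compensator is controlled by plugging $(\mathbf U^{M,N}(s),\mathbf V^{M,N}(s))$ into the jump-size estimate \eqref{eq:Jbound2} established inside the proof of Proposition~\ref{prop:plusDeMart}, yielding $\widetilde J_i^{M,N}(T)\lesssim N^{-1}(T + |||\mathbf U^{M,N}|||_{T,M}^2 + |||\mathbf V^{M,N}|||_{T,M}^2)$. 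Its $p/2$-th moment then contributes a $T^{p/2}/N^{p/2} + N^{-p/2}\,\E[|||\mathbf U^{M,N}|||_{T,M}^p + |||\mathbf V^{M,N}|||_{T,M}^p]$ term, which fits into the target since $T^{p/2}/N^{p/2} \lesssim (1+T^2)^{p/4}/N^{p/4}$ and $N^{-p/2}\le N^{-p/4}$ for $N\ge 1$.

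The martingale $\overline M_i^{M,N}$ is handled by Burkholder--Davis--Gundy. Its quadratic variation reads $[\overline M_i]_T = \sum_{s\le T}\|\Delta\mathcal M_i(s)\|_{-1,M}^4$, and the key gain here is the deterministic control on the maximal individual jump: since every atom of $\mathcal M_i$ is of the form $\pm N^{-1}\mathbf e_j$ or $N^{-1}(\mathbf e_{j\pm 1}-\mathbf e_j)$, Proposition~\ref{prop:jumpSize} gives $\|\Delta\mathcal M_i(s)\|_{-1,M}^2\lesssim 1/(N^2 M)$. Hence $[\overline M_i]_T\lesssim J_i(T)/(N^2 M)$, and BDG combined with Cauchy--Schwarz produces $\E[\sup_t|\overline M_i(t)|^{p/2}]\lesssim (N^2 M)^{-p/4}\,\E[J_i(T)^{p/2}]^{1/2}$. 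Combining this with the elementary bound $\E[J_i(T)^{p/2}]\lesssim \E[\widetilde J_i(T)^{p/2}] + \E[\sup_t|\overline M_i|^{p/2}]$, a Young inequality absorbs the self-referential square-root term on the right and delivers a closed estimate $\E[J_i(T)^{p/2}]\lesssim \E[\widetilde J_i(T)^{p/2}] + (N^2 M)^{-p/2}$. Adding this to the $\mathcal Q_i$ bound yields the lemma.

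The main technical obstacle is not the inequality chain itself but the integrability bookkeeping needed to justify BDG and the closing-the-loop Young step. I would therefore first run the entire argument with $T\wedge T^K$ in place of $T$, using the localization sequence from Subsection~\ref{sec:wp_stoc} to keep every quantity almost surely bounded; since the constants produced are independent of $K$, the final bound follows by monotone convergence together with the $p$-th moment control from Proposition~\ref{prop:mom_p}.
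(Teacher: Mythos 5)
Your proof is correct, but it takes a genuinely different route from the paper's. You decompose the increasing process $J_i^{M,N} = \sum_{s\le t}\|\Delta\mathcal{M}_i(s)\|_{-1,M}^2$ into its predictable compensator $\widetilde J_i$ plus a compensated martingale $\overline M_i$, estimate $\widetilde J_i$ directly from \eqref{eq:Jbound2}, and handle $\overline M_i$ by Burkholder--Davis--Gundy together with the clever observation that the deterministic jump bound from Proposition~\ref{prop:jumpSize} dominates the optional quadratic variation by $[\overline M_i]_T\lesssim J_i(T)/(N^2M)$; a Young inequality then closes the resulting self-referential estimate on $\E[J_i(T)^{p/2}]$. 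The paper instead avoids BDG entirely: it writes $\mathcal P_1(t)^q$ (with $q=p/2$) \emph{pathwise} as a Poisson integral of the telescoping increments $(\mathcal P_1(s^-)+N^{-2}\|\theta\|_{-1,M}^2)^q-\mathcal P_1(s^-)^q$, applies the elementary inequality $(x+a)^q-x^q\lesssim ax^{q-1}+a^q$ (Proposition~\ref{prop:ineg_p}) and the bound $\|\theta\|_{-1,M}^{2(q-1)}\lesssim M^{-(q-1)}$ to the integrand, and only then takes expectations (which turns the Poisson integral of a non-negative integrand into its Lebesgue compensator without any martingale theory), finally closing the loop with the same type of Young argument. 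The two approaches consume the same ingredients — the jump size estimates, the compensator bound \eqref{eq:Jbound2}, and a Young absorption — and yield the same rate; the paper's pathwise expansion is somewhat lighter on integrability bookkeeping (no BDG, no square-integrability to verify for $\overline M_i$), which is the concern you correctly flag and propose to address via localization. Both are valid.
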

\begin{proof} We write $q=p/2$.
Proposition \ref{prop:plusDeMart} allows to control $\mathcal Q_1^q$ and we need now to estimate the second term of $H_1^q$, i.e. $\mathcal P_1^q$, where  
 \begin{align*}
\mathcal P_1(t):= \sum_{0 \le s \le t} \norm {\mathcal M_1(s) - \mathcal M_1(s^-)}_{-1,M}^2 &= \sum_{0 \le s \le t} \norm {\mathbf U(s) - \mathbf U(s^-)}_{-1,M}^2\\
&= \frac 1 {N^2} \int_0^{t} \int_{\mathbb R_+ \times \mathcal T}  \1_{\rho \le \nu_1(\mathbf U(s^-), \mathbf V(s^-), \theta)}\norm{ \theta}^2_{-1,M} \,  \N_1(\dd s ,\dd \rho ,\dd \theta ).
\end{align*}
First, 
  $$\mathcal P_1(t)^q = \int_0^{t} \int_{\mathbb R_+ \times \mathcal T}  \1_{\rho \le \nu_1(\mathbf U(s^-), \mathbf V(s^-), \theta)}\left[\left(Q(s^-) + \frac 1 {N^2} \norm{ \theta}^2_{-1,M}\right)^q-Q(s^-)^q \right]\,  \N_1(\dd s ,\dd \rho ,\dd \theta ) .$$
    Using the inequality $(x+a)^q-x^q\lesssim ax^{q-1}+ a^q$ from Proposition \ref{prop:ineg_p} in the appendix we deduce
$$\mathcal P_1^q (t)\le  \int_0^{t} \int_{\mathbb R_+ \times \mathcal T}  \1_{\rho \le \nu_1(\mathbf U(s^-), \mathbf V(s^-), \theta)}\left[\frac 1 {N^2} \norm{ \theta}^2_{-1,M} \mathcal P_1(s^-)^{q-1} +  \frac 1 {N^{2q}} \norm{ \theta}^{2q}_{-1,M}\right]\,  \N_1(\dd s ,\dd \rho ,\dd \theta ).  $$
Bounding $\norm{ \theta}^{2(q-1)}_{-1,M}$ from the last term by $ 1 /{M^{q-1}}$ thanks to Proposition \ref{prop:jumpSize},
\begin{multline*}
    \mathcal P_1(t)^q \lesssim  \frac 1 {N^2} \int_0^{t} \int_{\mathbb R_+ \times \mathcal T}  \1_{\rho \le \nu_1(\mathbf U(s^-), \mathbf V(s^-), \theta)}\norm{ \theta}^2_{-1,M} \mathcal P_1(s^-)^{q-1}  \,  \N_1(\dd s ,\dd \rho ,\dd \theta ) \\
    + \frac 1 {N^{2q} M^{q-1}}\int_0^{t} \int_{\mathbb R_+ \times \mathcal T}  \1_{\rho \le \nu_1(\mathbf U(s^-), \mathbf V(s^-), \theta)}  \norm{ \theta}^{2}_{-1,M}\,  \N_1(\dd s ,\dd \rho ,\dd \theta ). 
\end{multline*}
Taking the expectation and using \eqref{eq:Jbound2} and $M\geq 1$,
\begin{align*}
    \E[\mathcal P_1(t)^q] & \lesssim  \frac 1 {N}  \E \left[\int_0^t (1+ \norm{\mathbf U(s)}_{2,M}^2 +\norm{\mathbf V(s)}_{2,M}^2 ) \mathcal P_1(s)^{q-1}\,  \dd s  \right]\\
  &\qquad   + \frac 1 {N^{2q-1}}  \E \left[\int_0^t (1+ \norm{\mathbf U(s)}_{2,M}^2 +\norm{\mathbf V(s)}_{2,M}^2 )\,  \dd s \right].
    \end{align*}
Since $\mathcal P_1$ is non-decreasing, 
$$\E[\mathcal P_1(t)^q] \lesssim  \frac 1 {N}  \E \left[ \mathcal P_1(t)^{q-1} \int_\QtM 1+\mathbf U^2 +\mathbf V^2 \right]+ \frac 1 {N^{2q-1} }  \E \left[\int_\QtM 1+\mathbf U^2 +\mathbf V^2  \right].$$
This ends the proof for $q=1$ (i.e. $p=2$). For $q>1$, 
using Young's inequality $xy \lesssim (\varepsilon x)^{\frac q {q-1}} + (y/ \varepsilon)^q$,
$$\E[\mathcal P_1(t)^q] \lesssim   \varepsilon^{\frac q {q-1}} \E [ \mathcal P_1(t)^{q}] 
+\left(\frac {\varepsilon^{-q}} {N^{q}}  +\frac 1 {N^{q} } \right)   \E \left[\left(\int_\QtM 1+\mathbf U^2 +\mathbf V^2 \right)^q  \right] + \frac 1 {N^{(2q-2)\frac q {q-1}}}.$$
Fixing $\varepsilon >0$ small enough to absorb $\E [ \mathcal P_1(t)^{p}]$ from the right-hand side in the left-hand side, we get
$$\E[\mathcal P_1(t)^q] \lesssim  \frac {1} {N^{q}} \,    \E \left[\left(\int_\QtM 1+\mathbf U^2 +\mathbf V^2 \right)^q \right] + \frac 1 {N^{2q}}$$
Since $N \ge 1$ and 
$$ \int_\QtM 1+\mathbf U^2 +\mathbf V^2 \le t + |||\mathbf U|||_{t,M}^2 + |||\mathbf V|||_{t,M}^2,$$ 
this concludes the proof.
\end{proof}

\subsection{Large deviations estimates for births and deaths}
\label{sec:nJumps}

In this section, we  control the number of births and deaths along time together with the corresponding rates cumulated over time.
The first step is to compare the number of jumps of interest to their cumulative intensity over time, when time is becoming large. We provide    large deviations estimates for the gap, for general jump Markov processes, which can be of independent interest. We then apply it for our framework. \\

We consider a general  jump process Markov $X$ on a discrete countable space $E$ and write $\nu(x,y)$ the jump rate from $x$ to $y$, with  $\nu(x,x)=0$ by convention. Our aim is to control terms of the form
$$I_A(t)=\int_0^t  \nu_A(X_s)  \, \dd s,$$
where $A$ is a subset of  the set of transitions $\{(x,y) :  x\in E, y \in  E, x\ne y\}$ and 
$$\nu_A(x)= \sum_{(x ,y)\in A}  \nu(x,y)$$
is the total rate of transitions of type $A$ when the process is in state $x$.
The process $I_A$ term quantifies the cumulative transition rates of type $A$ along trajectories of $X$. It appears in the semi martingale decomposition of jump Markov processes. The purpose of this subsection is to prove that this term is close to the cumulative number of transitions of type $A$, defined by :
$$N_A(t)=\sum_{s\leq t} 1_{(X_{s-},  X_s)\in A}.$$
More precisely, we prove   that $I_A$ and $N_A$ have to be comparable with a great probability as soon as one of them is large, in the following sense. We consider the times when $N_A$ or $I_A$ is larger than a given value $K$ :
$$\mathfrak I^K_A=\{t \in \R_+ \, : \,  \max(N_A(t),I_A(t))\geq K\}$$
and prove the following  large deviation estimate. This estimate does not need assumption on the ergodicity and asymptotic behavior of the process.
\begin{Prop} \label{lem:nJumps}
    There exists two universal constants $c,C$ such that for all $\varepsilon \in (0,\frac 1 2)$ and $K >0$,
    $$ \P\left( \exists t \in \mathfrak I^K_A : |N_A(t) -I_A(t)| \ge \varepsilon I_A(t)  \right) \le  C \left(1+\frac 1 { \varepsilon^3 K}\right) e^{-c \varepsilon^2 K}.$$
\end{Prop}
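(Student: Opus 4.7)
The plan is to rely on exponential martingales associated with the counting process $N_A$. Since $N_A$ admits $I_A$ as its $\mathcal{F}_t$-compensator, the Doléans-Dade construction for point processes yields, for every $\theta \in \R$, a non-negative local martingale
\[
E_\theta(t) := \exp\bigl(\theta N_A(t) - (e^\theta - 1) I_A(t)\bigr),
\]
with $E_\theta(0) = 1$. Being non-negative, $E_\theta$ is a supermartingale by Fatou, and Doob's maximal inequality yields $\P(\sup_{t \ge 0} E_\theta(t) \ge \lambda) \le 1/\lambda$ for every $\lambda > 0$. For $\theta < 0$ the factor $\exp(-(e^\theta - 1) I_A(t))$ is unbounded in $t$, so this step requires a preliminary localisation along $\tau_n := \inf\{t : I_A(t) \ge n\}$ followed by Fatou; this is a minor technicality.

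I would then split the event of interest as
\[
\{|N_A - I_A| \ge \varepsilon I_A\} \subset \{N_A \ge (1+\varepsilon) I_A\} \cup \{N_A \le (1-\varepsilon) I_A\}
\]
and handle each deviation with a tailored choice of $\theta$. On the upper-deviation event intersected with $\mathfrak{I}_A^K$, one has $\max(N_A(t), I_A(t)) = N_A(t) \ge K$; the choice $\theta = \log(1+\varepsilon) > 0$ together with $I_A(t) \le N_A(t)/(1+\varepsilon)$ gives
\[
E_\theta(t) \ge \exp\Bigl(N_A(t)\bigl[\log(1+\varepsilon) - \tfrac{\varepsilon}{1+\varepsilon}\bigr]\Bigr) \ge e^{c \varepsilon^2 K},
\]
since the bracket is the standard Bennett quantity, bounded below by $c\varepsilon^2$ on $(0, 1/2)$. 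Symmetrically, on $\{N_A(t) \le (1-\varepsilon) I_A(t)\} \cap \mathfrak{I}_A^K$ one necessarily has $I_A(t) \ge K$, and taking $\theta = \log(1-\varepsilon) < 0$ yields $E_\theta(t) \ge \exp(c\varepsilon^2 I_A(t)) \ge e^{c\varepsilon^2 K}$. Doob applied to each $E_\theta$ then supplies the bound $e^{-c\varepsilon^2 K}$ for each piece.

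The main obstacle will be to extract the polynomial prefactor $1 + 1/(\varepsilon^3 K)$, since the bare martingale argument above produces only $Ce^{-c\varepsilon^2 K}$. I expect this correction to arise from a chaining step making precise the supremum over the random set $\mathfrak{I}_A^K$. Concretely, I would slice $\mathfrak{I}_A^K$ along dyadic levels $\sigma_n := \inf\{t : I_A(t) \ge 2^n\}$ with $2^n \ge K$, apply the exponential tail on each interval $[\sigma_n, \sigma_{n+1})$ (where a Bernstein-type control on the increment $N_A(\sigma_{n+1}) - N_A(\sigma_n)$ manages the fluctuations of $N_A$ between consecutive levels), and sum the resulting bounds geometrically while optimising $\theta$ at each scale. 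The polynomial prefactor then emerges from this scale-by-scale summation, and the two tail bounds are combined to yield the announced estimate. Importantly, the exponential-martingale approach uses no ergodicity or asymptotic property of $X$, which matches the statement's universality.
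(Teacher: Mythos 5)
Your exponential-martingale approach is correct and genuinely different from the paper's, and it in fact yields a \emph{strictly sharper} bound than the one stated. The paper proceeds by time change: it first proves the estimate for a rate-1 Poisson process $P$ via explicit Poisson tail bounds summed over a geometric grid $t_j = \beta^j(1-\varepsilon)K$, and then carries the estimate to general $X$ through the random time change $t\mapsto I_A(t)$ (with an extra approximation step, adding a small independent Poisson process, to cope with the case $\inf_x \nu_A(x)=0$). The polynomial prefactor $1 + 1/(\varepsilon^3 K)$ comes precisely from summing $\sum_j e^{-c\varepsilon^2 t_j}$; it is a loss of the slicing procedure, not a feature of the result. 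Your Dol\'eans--Dade martingale $E_\theta(t) = \exp(\theta N_A(t) - (e^\theta-1)I_A(t))$ handles the supremum over the random set $\mathfrak I_A^K$ in one shot through the supermartingale maximal inequality and therefore produces $2e^{-c\varepsilon^2 K}$ with no prefactor, and without any time change or degeneracy issue when $\inf_x\nu_A(x)=0$.

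The one genuine error in your write-up is the final paragraph: you claim the ``main obstacle'' is to produce the prefactor $1+1/(\varepsilon^3 K)$ and propose a dyadic chaining to recover it, but since $1+1/(\varepsilon^3 K)\ge 1$, a bound of the form $Ce^{-c\varepsilon^2 K}$ is \emph{stronger} than the stated one, so there is nothing to ``extract''. The chaining step you sketch would only weaken your estimate to match the paper's. What is actually left to verify carefully in your argument is small and you have already identified it: (i) the containment $\{\exists t\in\mathfrak I_A^K : N_A(t)\ge(1+\varepsilon)I_A(t)\}$ forces $N_A(t)=\max(N_A(t),I_A(t))\ge K$ (and symmetrically $I_A(t)\ge K$ on the lower-deviation event), so the pathwise lower bound $E_\theta(t)\ge e^{c\varepsilon^2 K}$ holds at the deviating time; (ii) the coefficient $c$ works uniformly on $(0,\tfrac12)$, which follows from the elementary inequalities $\log(1+\varepsilon)-\tfrac{\varepsilon}{1+\varepsilon}\ge c\varepsilon^2$ and $(1-\varepsilon)\log(1-\varepsilon)+\varepsilon\ge c\varepsilon^2$; and (iii) the supermartingale property of the nonnegative local martingale $E_\theta$, which as you note is standard. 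With the chaining paragraph removed, the proof is complete and improves the constant in the proposition.
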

We first prove this result for Poisson process and then extend it by using the suitable time change. Indeed,
the identity
$$Y_{I_A(t)}=X(t)$$
defines a jump Markov process $Y$ as soon as $X$ does not reach a point where $\nu_A$ vanishes. Moreover the process counting the jumps of $Y$ in $A$ is a Poisson process with parameter $1$. Authorizing $\nu_A$ to vanish requires an additional approximation argument and a more involved time change, see the proof below.
\begin{proof}
We first consider the case when $X=P$ is a Poisson process with parameter $1$. The jumps of $P$ are of size $1$ and counting  all its jumps  $J=\{(n,n+1) : n\geq 0\}$ until time $t$  yields the value of the process at time $t$ : $P(t)=N_J(t)$. Moreover $\nu_J=1$ and $I_J(t)=t$ and $ \mathfrak I^K_J=\{ t\in \R_+ : P(t)\geq K\}\cup [K,\infty)$.
We first estimate
\begin{equation}
\label{firstesgd}
\P(\mathfrak I^K_J \cap [0,(1-\varepsilon)K] \ne \varnothing)\leq \P(P((1-\varepsilon)K)\ge K) \le e^{-c\varepsilon^2K}
\end{equation}
using a classical large deviation estimate for Poisson random variable, see for instance Theorem 1.1 in   \cite{ultralogconcave}.
Second, we split the remaining time interval
$$[(1-\varepsilon) K , \infty ) = \bigcup_{j =0}^\infty [t_j, t_{j+1}], \quad \text{where }  \, \, t_j:= \beta^j (1-\varepsilon) K$$
and
$$ \beta=\frac{1}{2}\left\{1+\min\left( \frac{1 + \varepsilon}{1+  \varepsilon /2},  \frac{1 - \varepsilon/2}{1-  \varepsilon}\right)\right\} \, \in \, \left(1  ,\min\left( \frac{1 + \varepsilon}{1+  \varepsilon /2},  \frac{1 - \varepsilon/2}{1-  \varepsilon}\right)\right).$$
This choice of $\beta$ and monotonicity of $P$ implies for any $t\in [t_j,t_{j+1}]$,
$$\{P(t) \le (1 - \varepsilon)t\} \subset \{P(t_{j}) \le (1 - \varepsilon) t_{j+1}\}\subset \{P(t_j)-t_j \le \varepsilon t_j/2\}$$
and
$$\{ P(t) \ge (1 + \varepsilon)t\}\subset\{P(t_{j+1}) \ge (1 + \varepsilon) t_j \} \subset \{P(t_{j+1})-t_{j+1} \ge \varepsilon t_j/2\}.$$ 
Combining these inclusions of events
ensures that
$$ \{\exists t \in [ (1-\varepsilon)K, \infty) : |P(t) - t| \ge \varepsilon t \} \subset 
 \bigcup_{j=0}^\infty \{|P(t_j) - t_j| \ge  \varepsilon  t_j /2 \}.$$
We can use again large deviation (tail) estimate for Poisson law  and get
\begin{align*}& \P(\exists t \in [ (1-\varepsilon)K, \infty) : |P(t) - t| \ge \varepsilon t )\\
& \qquad  \leq \sum_{j= 0}^{\infty} \P(|P(t_j) - t_j| \ge  \varepsilon  t_j /2) \leq 
\sum_{j=0}^\infty e^{-c \varepsilon^2 t_j} 
\lesssim \frac 1 { \varepsilon^2 t_0 \ln \beta} e^{-c \varepsilon^2 t_0},
\end{align*}
where $\lesssim$ means here that  the constant is universal (it depends neither on $K$ nor on $\varepsilon$) and
this  inequality comes from the following computation for  $\alpha >0$ and $\beta >1$, using the change of variables $y = \alpha \beta^x$,
$$ \sum_{j=0}^\infty e^{- \alpha \beta^j} \le \int_{-1}^\infty e^{- \alpha \beta^x} \, \dd x = \frac 1 {\ln \beta}\int_{\alpha/ \beta}^\infty \frac{e^{- y}}{y} \, \dd y \le \frac \beta { \alpha \ln \beta} \int_{\alpha/ \beta}^\infty e^{- y} \, \dd y =\frac \beta { \alpha \ln \beta} e^{- \alpha/ \beta}.$$ 
Combining this estimate with \eqref{firstesgd} and using  $t_0 \ln \beta \gtrsim  \varepsilon K$ yields
 the expected result for Poisson process :
  \begin{align} & \P( \exists t \in \mathfrak I^K_J : |P(t) - t| \ge \varepsilon t  ) \nonumber \\
  &\qquad\leq \P(  \mathfrak I^K_J \cap[0,(1-K)\varepsilon] \ne \varnothing)  + \P( \exists t \in [ (1-\varepsilon)K, \infty) : |P(t) - t| \ge \varepsilon t  )\nonumber\\
  & \qquad \lesssim   \left(1+\frac 1 { \varepsilon^3 K}\right) e^{-c \varepsilon^2 K}. \label{Poissonborn}
  \end{align}
$\newline$
We now prove the result for any jump process $X$ and set of transitions $A$ such that $\inf_{x\in E} \nu_A(x)>0$. In that case, $I_A(t)=\int_0^t  \nu_A(X_s)  \, \dd s$ increases continuously and at least linearly, thus realizing a bijection from $[0,\infty)$ into $[0,\infty)$ and we define its inverse function
$$\tau_A(t)=I_A^{-1}(t)=\inf\{ u \in \R_+ :  \int_0^u  \nu_A(X_s)  \, \dd s\geq t\}.$$
The proof now relies on the fact that the process  $P$ defined for $t\geq 0$ by
$$P(t)=N_A(\tau_A(t))$$
is a Poisson process with parameter $1$, see for instance Chapter 6 in \cite{EK}.
Then $P(u)=N_A(t)$ with $u=I_A(t)$ and
$$\{\exists u \in [0,K] \cup  P^{-1}([K,\infty)) \,    : \,  |P(u) - u| \ge \varepsilon u \}= \{ \exists t \in \mathfrak I^K_A : |N_A(t) - I_A(t)| \ge \varepsilon I_A(t)  \}.$$
Then we can use  $\eqref{Poissonborn}$ for the left hand side and control the right hand side and  get
 \begin{align}  \P( \exists t \in \mathfrak I^K_A : |N_A(t) - I_A(t)| \ge \varepsilon I_A(t)  ) &\lesssim   \left(1+\frac 1 { \varepsilon^3 K}\right) e^{-c \varepsilon^2 K}. \label{numinborn}
  \end{align}

To conclude the proof, we need to deal with the case when $\inf_{x\in E} \nu_A(x)=0$, which will be useful latter.
For that purpose, we introduce a new Poisson process $P^{\alpha}$ with parameter $\alpha>0$, independent of $X$. We consider  the jump Markov process $(X,P_{\alpha})$ in $E\times \mathbb N$ and consider the following set of transitions 
$$B=\{((x,n),(y,n)) \, : \, (x,y)\in A, n\geq 0\}\cup \{((x,n),(x,n+1)) \, : \, x \in E, n\geq 0\},$$
which means that either the first coordinate $X$  makes a jump in $A$ or the second coordinate $P_{\alpha}$ jumps. We observe that $\nu_B(x,n)=\nu_A(x)+\alpha$ for any $(x,n)\in E\times \mathbb N$. Thus, for this  jump Markov process $(X,P_{\alpha})$, $\inf_{(x,n)\in E\times \mathbb N} \nu_B(x,n)\geq  \alpha>0$
and we can invoke \eqref{numinborn}. We get
for any $T\geq 0$,
 \begin{align}
 \label{majrecup}
 \P( \exists t \in \mathfrak I^K_B \cap[0,T] : |N_B(t) - I_B(t)| \ge \varepsilon I_B(t) ) &\lesssim   \left(1+\frac 1 { \varepsilon^3 K}\right) e^{-c \varepsilon^2 K}. 
  \end{align}
Observe that the right hand side does not depend on $\alpha$ or $T>0$.  We also recall that $N_A(t)=\sum_{s\leq t} 1_{(X_{s-},  X_s)\in A}$ and here $N_B(t)=\sum_{s\leq t} 1_{(X_{s-},  X_s)\in A}+1_{P^{\alpha}(s-)\ne P^{\alpha}(s)}$,
$$N_B(t)=N_A(t)+P^{\alpha}(t), \quad I_B(t)=I_A(t)+\alpha t$$
and
$$ \mathfrak I^K_B\cap[0,T]=\{ t\in [0,T] : \max(N_A(t)+P^{\alpha}(t),I_A(t)+\alpha t) \geq K\}.$$
Adding that $P^{\alpha}$ converges to $0$ in $\mathbb D([0,T], \mathbb N)$ and similarly $(\alpha t)_{t\in [0,T]}$ converges uniformly to $0$ in $[0,T]$ as $\alpha$ tends to $0$, we can let $\alpha$ go to zero in \eqref{majrecup} and get
\begin{align*}  \P( \exists t \in \mathfrak I^K_A \cap[0,T] : |N_A(t) - I_A(t)| \ge \varepsilon I_A(t)  ) &\lesssim   \left(1+\frac 1 { \varepsilon^3 K}\right) e^{-c \varepsilon^2 K}.  \end{align*}
Letting $T$ go to infinity ends the proof. \end{proof}
A direct consequence of this estimate is that the distribution of $N_A$ can be controlled  by $I_A$, which induces a bound on the moments of $N_A$ expressed with analogue moments of $I_A$

We are now using the previous estimates for the jump Markov process $X= (\mathbf U^{M,N}, \mathbf V^{M,N})$ describing the evolution of species.  We recall from the first part of Hypothesis \ref{hyp:bd} $ii)$, that there  exists $a,c>0$ and $\alpha \in (0,1)$  such that for any $u,v\geq 0$,
\begin{equation}
\label{condtauxnm}
    b_1(u,v)\leq \rho_0+
 \alpha \, d_1(u,v) .
\end{equation}
It will allow us to control births terms by deaths terms.
Keeping the notation above, we consider the renormalized cumulated intensity of births of the first species
$$ I_B^{M,N}(t) =  \frac{1}{M}\int_{0}^t \left(\sum_{i=1}^{M}  U_i^{M,N}(s) \, b_1(U_i^{M,N}(s),  V_i^{M,N}(s))\right) ds.$$
 Similarly the deaths are evaluated by  
$$ I_D^{M,N}(t) = \frac{1}{M} \int_{0}^t \left(\sum_{i=1}^{M}  U_i^{M,N}(s)\,  d_1(U_i^{M,N}(s),  V_i^{M,N}(s))\right) ds.$$
 We observe that the death term $I_D$ dominates the birth term $I_B$, due to the domination of $b$ by $d$ coming from \eqref{condtauxnm}, so the delicate part is the control of $I_D$. Using the previous result, $I_D$ can be related to the number of deaths, while there cannot be too many deaths. Indeed  this would require a comparable number of births in the first place but for large populations the hypothesis on birth and death rates prevents the births to compensate the deaths. Combining these arguments, we can prove the following large deviation estimate. We only focus here on the first species. An analogous result holds  for the second species and more general structured population models.  \begin{Prop}
\label{gdevIMU}Assume \eqref{condtauxnm}.
Then, there exists constants $t_1>0$ and $\kappa > 1 $ and $\gamma \in (0,1)$  such that for all $M,N\geq 1$ and for any $K_0 \geq 1$ and for  any initial condition  satisfying  $\mathbf [{\bf U}^{M,N}_0]_M \le K_0 $ a.s., we have
$$ \P \left( I_B^{M,N}(t_1)+ I_D^{M,N}(t_1)  \ge \kappa K_0 \right) +\P \left( \sup_{s \le t_1} [\mathbf U^{M,N}(s)]_M  \ge \kappa K_0 \right) \lesssim \gamma^{K_0MN}.$$
Moreover, the constant behind $\lesssim$  depends only on $\rho_0, \alpha$ and not on $K_0, M$ or $N$.
\end{Prop}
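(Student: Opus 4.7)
The plan is to combine an exponential-supermartingale (Chernoff) bound for the total species-$1$ population with the pathwise large-deviations comparison of Proposition~\ref{lem:nJumps}. Set $Z_t:=NM[\mathbf U^{M,N}(t)]_M\in\mathbb N$, the total number of species-$1$ individuals. This is a pure-jump process which jumps $+1$ (births) at rate $\beta(s):=\sum_{j=1}^{M}NU_j^{M,N}(s)b_1(U_j^{M,N}(s),V_j^{M,N}(s))$, $-1$ (deaths) at rate $\delta(s):=\sum_{j=1}^{M}NU_j^{M,N}(s)d_1(U_j^{M,N}(s),V_j^{M,N}(s))$, and is unchanged by motion transitions. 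Its semimartingale decomposition reads $Z_t=Z_0+N_B(t)-N_D(t)$, with compensators $\int_0^t\beta=NM\,I_B^{M,N}(t)$ and $\int_0^t\delta=NM\,I_D^{M,N}(t)$; assumption \eqref{condtauxnm} translates pathwise to $\beta(s)\le\rho_0 Z_s+\alpha\delta(s)$, and by hypothesis $Z_0\le K_0MN$ a.s.

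To control $\sup_s Z_s$, I would use the non-negative local martingale $L^\lambda_t:=\exp\bigl(\lambda(Z_t-Z_0)-\int_0^t[\beta(s)(e^\lambda-1)+\delta(s)(e^{-\lambda}-1)]ds\bigr)$, $\lambda>0$, which is a supermartingale with $L^\lambda_0=1$. For $\lambda>0$ small enough (depending only on $\alpha<1$), $\alpha(e^\lambda-1)+(e^{-\lambda}-1)<0$, so the integrand in the exponent is bounded above by $\rho_0 Z_s(e^\lambda-1)$. Setting $\tau_\kappa:=\inf\{t:Z_t\ge\kappa K_0MN\}$ and using $\int_0^{t\wedge\tau_\kappa}Z_s\,ds\le\kappa K_0MN\cdot t$, the supermartingale inequality at $t_1\wedge\tau_\kappa$ and Markov yield
\[\P(\tau_\kappa\le t_1)\le\exp\bigl(K_0MN\bigl[-\lambda(\kappa-1)+\rho_0\kappa t_1(e^\lambda-1)\bigr]\bigr).\]
Picking any $\kappa>1$ and any $t_1<(\kappa-1)/(\rho_0\kappa)$ makes the bracket strictly negative for $\lambda$ small, so $\P(\sup_{s\le t_1}[\mathbf U^{M,N}(s)]_M\ge\kappa K_0)\le\gamma_1^{K_0MN}$ for some $\gamma_1\in(0,1)$.

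To control $I_B^{M,N}(t_1)+I_D^{M,N}(t_1)$, I would write $Z_t=Z_0+NM(I_B^{M,N}(t)-I_D^{M,N}(t))+\mathcal M_t$, where the real martingale $\mathcal M_t:=(N_B-NM\,I_B^{M,N})(t)-(N_D-NM\,I_D^{M,N})(t)$ compensates births against deaths. Together with $NM\,I_B^{M,N}(t)\le\alpha NM\,I_D^{M,N}(t)+\rho_0\int_0^tZ_s\,ds$ (integrating $\beta\le\rho_0 Z+\alpha\delta$) and $Z_t\ge0$, this yields $(1-\alpha)NM\,I_D^{M,N}(t_1)\le Z_0+|\mathcal M_{t_1}|+\rho_0\int_0^{t_1}Z_s\,ds$. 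The hard part is bounding $|\mathcal M_{t_1}|$: its predictable quadratic variation is $NM(I_B^{M,N}+I_D^{M,N})(t_1)$, precisely the quantity we want to estimate, so a direct Bernstein bound would be circular. Proposition~\ref{lem:nJumps} is what breaks this circularity: applied to the jump Markov process $(\mathbf U^{M,N},\mathbf V^{M,N})$ with $A$ chosen successively as the set of birth and the set of death transitions of species~$1$ and threshold $K:=\kappa K_0MN$, it shows that for any fixed $\varepsilon\in(0,1/2)$, on an event of probability at least $1-Ce^{-c\varepsilon^2 K_0MN}$ one has $|N_B(t)-NM\,I_B^{M,N}(t)|\le K+\varepsilon NM\,I_B^{M,N}(t)$ for every $t$, and analogously for deaths; hence $|\mathcal M_{t_1}|\le 2K+\varepsilon NM(I_B^{M,N}+I_D^{M,N})(t_1)$.

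Substituting this bound on $|\mathcal M_{t_1}|$ together with $NM\,I_B^{M,N}(t_1)\le\alpha NM\,I_D^{M,N}(t_1)+\rho_0\kappa K_0MN t_1$ and $\int_0^{t_1}Z_s\,ds\le\kappa K_0MN t_1$ (both valid on the Step~1 good event) into the inequality above yields
\[\bigl(1-\alpha-\varepsilon(1+\alpha)\bigr)NM\,I_D^{M,N}(t_1)\le (1+2\kappa)K_0MN+(1+\varepsilon)\rho_0\kappa K_0MN t_1.\]
Choosing $\varepsilon:=(1-\alpha)/(2(1+\alpha))$ keeps the left coefficient at least $(1-\alpha)/2>0$, whence $I_D^{M,N}(t_1)\le C'K_0$ and $I_B^{M,N}(t_1)+I_D^{M,N}(t_1)\le CK_0$ on the intersection of the three good events, whose complementary event has probability $\lesssim\gamma^{K_0MN}$ with $\gamma:=\max(\gamma_1,e^{-c\varepsilon^2})<1$. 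Taking the constant $\kappa$ in the statement as $\max(\kappa,C)$ gives both halves of the claim simultaneously, with constants depending only on $\rho_0$ and $\alpha$.
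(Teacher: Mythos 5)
Your proof is correct and reaches the same conclusion, but it takes a genuinely different route from the paper's. For the population bound you use a Chernoff-type argument with the exponential supermartingale $L^\lambda_t$, a tool the paper never introduces: the paper instead observes that both $N_D(t)$ and $MN[\mathbf U(t)]_M$ are pathwise dominated by $N_B(t)+MNK_0$ (one must be born before dying or being counted), then splits the tail event $\{N_B(t_1)\ge \kappa K_0MN\}$ into four sub-events, two of which are handled by Proposition~\ref{lem:nJumps} (births and deaths) and two of which are shown to be literally empty for a suitable choice of $\varepsilon$, $\kappa$, $\varrho$ and $t_1=(1-\sqrt\alpha)/\rho_0$. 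Your Step~2 is closer in spirit — you also invoke Proposition~\ref{lem:nJumps} for births and for deaths — but you organize it around the algebraic identity $Z_t=Z_0+NM(I_B-I_D)+\mathcal M_t$ and a pathwise bound on $|\mathcal M_{t_1}|$, which is cleaner and more modular than the paper's four-way decomposition. The two approaches are equivalent in strength: both yield $\gamma^{K_0MN}$ tail decay with constants depending only on $\rho_0,\alpha$. Your Step~1 has the advantage of being self-contained (it does not need the technically involved Proposition~\ref{lem:nJumps}), whereas the paper's route has the advantage that a single quantity, $N_B$, drives the whole bookkeeping, which makes the final conclusion (control of both $\sup_s[\mathbf U(s)]_M$ and $I_B+I_D$) fall out of a single tail estimate. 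One cosmetic remark: when you set $\gamma:=\max(\gamma_1,e^{-c\varepsilon^2})$, the exponent coming from Proposition~\ref{lem:nJumps} with $K=\kappa K_0MN$ is really $e^{-c\varepsilon^2\kappa K_0MN}$, so the correct and tighter base would be $e^{-c\varepsilon^2\kappa}$; your choice still works since $e^{-c\varepsilon^2}>e^{-c\varepsilon^2\kappa}$, but the dropped factor $\kappa$ in the exponent is worth noting.
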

We observe in the proof that $\kappa$ can be chosen large enough, and the constant behind $\lesssim$ decreases with respect to $\kappa$. 
\begin{proof}
Let us  consider  $N_B$ (resp. $N_D$) the process counting the number of births (resp. deaths) along time for the first species $\mathbf U$.  To be alive or die at some time, an individual has to be born before or to already exist at the initial time,  so  for any $t\geq 0$,
\begin{equation} \label{eq:boundND}
    \max( N_D(t) ,  MN  [\mathbf U(t)]_M) \leq  N_B(t) + MN [\mathbf U_0]_M \leq  N_B(t) + MNK_0 \qquad \text{a.s.}
\end{equation} 
for any initial condition satisfying $[\mathbf U_0]_M \le K_0$.
Moreover, applying   \eqref{condtauxnm} to the process of population densities  and integrating it  along time yields
\begin{equation}
\label{txBD}
     I_B(t) \le \alpha I_D(t) + \rho_0  \int_0^t [\mathbf U(s)]_M \, \dd s \le \alpha I_D(t) + \rho_0  t\left(\frac{N_B(t)}{MN}+K_0\right) \qquad \text{a.s.}
     \end{equation}
     since $N_B$ is non decreasing.
We need to control the tail of $N_B$ and the other bounds will follow using the two previous inequalities, together with the link between $I_D$ and $N_D$ obtained above via large deviations estimates. 
To control $N_B$, we split its tail as follows 
\begin{align}
\label{ecureuil}
     \left\{ N_B(t) \ge \kappa K_0 MN \right\} &
          \subset A_{t,\varepsilon,\kappa} 
     \cup B_{t,\varepsilon,\varrho, \kappa} \cup C_{t,\varrho, \varepsilon} \cup D_{t,\varepsilon,\kappa}
     \end{align}
     where $t, \varrho, \kappa$ are positive real numbers and $\varepsilon \in (0,1)$ that will be fixed later and 
     \begin{align*}
    A_{t,\varepsilon, \kappa}&=\{  N_B(t) \ge \kappa K_0 MN \, ; \, N_B(t) \geq (1 + \varepsilon) MNI_B(t)\}\\
B_{t,\varepsilon,\varrho, \kappa}&=   \left\{N_B(t) \ge \kappa K_0 MN   \, ; \,  N_B(t) \leq (1 + \varepsilon) MNI_B(t)\, ; \,  I_D(t) <  \varrho K_0
 \right\} \\
 C_{t,\varepsilon,\kappa}&=\left\{N_B(t) \ge \kappa K_0 MN \, ; \, N_B(t) \leq (1 + \varepsilon) MNI_B(t) \, ;  \,  MNI_D(t) \leq (1+\varepsilon)N_D(t)
 \right\}\\
D_{t,\varrho, \varepsilon}& =   \left\{ I_D(t) \geq \varrho K_0 \, ; \, MNI_D(t) \geq (1+\varepsilon) N_D(t)\right\}.
\end{align*}     
Let us deal with these four terms. Roughly, the estimates for $A$ and $D$ will come from large deviations estimates of  Proposition \ref{lem:nJumps}, while the two other will be due to \eqref{eq:boundND} and \eqref{txBD}.
First,  using  Proposition \ref{lem:nJumps} for births of $\bf U$ with $K=\kappa K_0MN$ ensures that for any $t\geq 0$, $\kappa>1$ and $N,M \geq 1$ and $K_0>0$,
$$\P\left(A_{t,\varepsilon,\kappa}  \right) \leq c_{\varepsilon, \kappa, K_0 } \gamma_{\varepsilon, \kappa}^{K_0MN}, $$
where $\gamma_{\varepsilon, \kappa} \in (0,1)$ and $c_{\varepsilon, \kappa, K_0 }= C \left(1+ 1/({ \varepsilon^3 \kappa K_0})\right).$ Furthermore, using   \eqref{txBD}, we observe that
\begin{align*}
&\left\{ N_B(t) \leq (1 + \varepsilon) MNI_B(t) \right\}\\ &\qquad \subset
\left\{ N_B(t)(1-(1 + \varepsilon)t\rho_0) \leq (1 + \varepsilon)( \alpha MNI_D(t) + t\rho_0  MN K_0) \right\}.
\end{align*}
Then 
$$B_{t,\varepsilon,\varrho,\kappa}\subset \{\kappa(1-t\rho_0- \varepsilon t\rho_0)\leq (1 + \varepsilon) (\alpha\varrho +t \rho_0 ) \},$$
which ensures that
$$B_{t_1,\varepsilon,\varrho,\kappa}=\varnothing,  \quad \text{for  } \, t_1=\frac{1-\sqrt{\alpha}}{\rho_0}, \, 
\varrho=\frac{\kappa\frac{\sqrt{\alpha}-\varepsilon(1-\sqrt{\alpha})}{1+\varepsilon}-1+\sqrt{\alpha}}{2\alpha}$$
soon as $\varrho>0$, i.e. for  any   $\kappa  \in (0,\infty)$ large enough and $\varepsilon \in  (0,1)$ small enough.
Furthermore,  combining 
\eqref{eq:boundND} and \eqref{txBD} yields
$$C_{t_1,\varepsilon,\kappa}\subset \left\{ N_B(t_1) \ge \kappa K_0 MN \, ; \, \left(\frac{1}{1+\varepsilon}-\rho_0 t_1-\alpha(1+\varepsilon)\right)N_B(t_1)\leq (\varrho_0t_1+\alpha(1+\varepsilon))MNK_0 \right\}.
$$
Since $\alpha<1$,  $1-\rho_0 t_1=\sqrt{\alpha} >\alpha$ and we can also choose $\varepsilon \in (0,1)$ (small enough) and then $\kappa>0$ (large enough) such that
$$  \frac{1}{1+\varepsilon}-\rho_0 t_1-\alpha(1+\varepsilon)>\frac{\rho_0t_1+\alpha(1+\varepsilon)}{\kappa},$$
which forces $$C_{t_1,\varepsilon,\kappa}=\varnothing.$$
Finally using again Proposition \ref{lem:nJumps}, but now for death events, ensures that
$$\P\left( D_{t_1,\varrho, \varepsilon}
 \right) \leq c_{\varepsilon,\varrho,K_0} \gamma_{\varepsilon, \varrho}^{K_0MN}.$$
 Recalling \eqref{ecureuil} and gathering the previous estimates for $A,B,C,D$ proves the expected result for $N_B(t_1)$ and ends the proof.
\end{proof}

\begin{Cor} \label{prop:HPbounds}
Let  $K_0\geq 1$. 
Then, there exists $c,d,C > 0$ and $\gamma \in (0,1)$  such that for all $M,N\geq 1$ and any initial condition $\mathbf U^{M,N}_0$,
$$ \P \left( \sup_{s\geq 0} \left\{\left(I_B^{M,N}(s)+ I_D^{M,N}(s)+ [\mathbf U^{M,N}(s)]_M \right)e^{-cs} \right\} \, \geq   d \right) \leq C\left( \P([\mathbf U^{M,N}_0]_M \geq  K_0) +\gamma^{MN}\right) .$$
Moreover, the constants $c,d,C, \gamma$ depend only on $\rho_0, \alpha, K_0$ .
\end{Cor}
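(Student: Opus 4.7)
The plan is to bootstrap Proposition \ref{gdevIMU} by iterating it along the time grid $\{n t_1\}_{n\geq 0}$ via the strong Markov property, letting the threshold for $[\mathbf U^{M,N}]_M$ grow geometrically so that the failure probabilities remain summable and sum back to the same form $\gamma^{K_0 MN}$ as in the one-step estimate. First I would split according to whether $[\mathbf U_0^{M,N}]_M \le K_0$: the complementary event contributes exactly the term $\P([\mathbf U_0^{M,N}]_M \ge K_0)$ already appearing on the right-hand side, so one can condition on the favourable event $\{[\mathbf U_0^{M,N}]_M \le K_0\}$.

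Next I would set $K_n := \kappa^n K_0$, with $\kappa>1$ the constant from Proposition \ref{gdevIMU}, and apply that proposition inductively to the shifted process $(\mathbf U^{M,N}(n t_1 + \cdot), \mathbf V^{M,N}(n t_1 + \cdot))$. Using the strong Markov property at $n t_1$, and the fact that the constants in Proposition \ref{gdevIMU} do not depend on the initial state beyond the threshold, one obtains events $G_n$ with
$$\P(G_n^c\,|\, \mathcal F_{n t_1}, [\mathbf U^{M,N}(n t_1)]_M \le K_n) \, \lesssim \, \gamma_0^{K_n MN} = \gamma_0^{\kappa^n K_0 MN},$$
and on $G_n$ the supremum $\sup_{s\in[n t_1,(n+1)t_1]} [\mathbf U^{M,N}(s)]_M$ together with the increments $I_B^{M,N}((n+1)t_1)-I_B^{M,N}(n t_1)$ and $I_D^{M,N}((n+1)t_1)-I_D^{M,N}(n t_1)$ are each bounded by $\kappa K_n = K_{n+1}$. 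In particular, on $G_n$ the threshold for the next step is propagated.

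A union bound then yields
$$\P\Bigl(\bigcup_{n\geq 0} G_n^c \,\Big|\, [\mathbf U_0^{M,N}]_M \le K_0\Bigr) \,\lesssim\, \sum_{n\geq 0} \gamma_0^{\kappa^n K_0 MN} \,\leq\, \gamma^{K_0 MN},$$
for some $\gamma\in(0,1)$ depending only on $\gamma_0$ and $\kappa$, since the exponents $\kappa^n K_0 MN$ grow super-geometrically and the whole sum is controlled by its first term. On the complementary event $\bigcap_n G_n$, for $s\in[n t_1,(n+1)t_1]$ one has $[\mathbf U^{M,N}(s)]_M \le \kappa^{n+1} K_0$ and, by telescoping, $I_B^{M,N}(s)+I_D^{M,N}(s) \le \sum_{k=0}^{n} K_{k+1} \lesssim \kappa^{n+1} K_0$. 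Choosing $c := (\log\kappa)/t_1$ turns the factor $\kappa^n$ into $e^{c n t_1}\le e^{cs}$, yielding a uniform constant $d$ (proportional to $K_0$, with proportionality depending only on $\kappa$).

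The main obstacle I anticipate is ensuring that the inductive step is genuinely uniform: Proposition \ref{gdevIMU} needs to apply to the shifted process with the \emph{same} constant $\kappa$ and the \emph{same} base $\gamma_0$, regardless of the current threshold $K_n$, so that the failure probabilities really have the form $\gamma_0^{K_n MN}$. A careful reading of the previous proof shows that $\kappa$ and $\gamma_0$ depend only on $\rho_0$ and $\alpha$ (not on the starting bound), which is exactly what is needed for the iteration. Once this is in place, the rest is geometric series bookkeeping.
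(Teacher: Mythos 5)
Your proposal is correct and follows essentially the same strategy as the paper's proof: iterate Proposition~\ref{gdevIMU} along the time grid $\{n t_1\}$ via the Markov property with geometrically growing thresholds $K_n=\kappa^n K_0$, decompose over the first step at which the estimate fails (which is what your remark ``on $G_n$ the threshold for the next step is propagated'' is really supplying, and is needed since the one-step bound is conditional on $[\mathbf U(n t_1)]_M\le K_n$, so a bare union bound would not close), sum the resulting super-geometric series, and convert $\kappa^n$ into $e^{cs}$ by a suitable choice of $c$. The small difference is that you make $c=(\log\kappa)/t_1$ explicit whereas the paper leaves it implicit, but both choices work.
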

\begin{proof}
We use $t_1>0$ and $\kappa>1$ coming from the previous proposition and consider the increments of $I_B$ and $I_D$  :
$$\Delta_k= I_B((k+1)t_1)-I_B(kt_1)+ I_D((k+1)t_1)  - I_D(kt_1).$$
The proof involves the first time when a deviation is observed. By Markov property at time $kt_1$ for $k\geq 0$, Proposition \ref{gdevIMU} ensures that  for any $k\geq 0$,
$$\P\left(\left\{\Delta_k\geq \kappa^{k+1} K_0  \right\} \cup
\left\{ \sup_{s\in [kt_1,(k+1)t_1]} [\mathbf U(s)]_M \geq \kappa^{k+1} K_0 \right\} \,  \bigg\vert \,  [\mathbf U(kt_1)]_M \leq \kappa^k K_0  \right) \lesssim \gamma^{\kappa^kK_0MN}.$$
As $\kappa>1$ and $\gamma <1$, the right hand is summable for $k\geq 0$ and its sum is dominated by $\gamma^{K_0MN}$. We get, by total probability formula,
\begin{align*}
 &   \P\left( \exists k\geq 0 \, \text{ such that } \,  \Delta_k\geq \kappa^{k+1} K_0  \,  \text{ or } \, 
\sup_{s\in [kt_1,(k+1)t_1]} [\mathbf U(s)]_M \geq \kappa^{k+1} K_0   \right) \\
& \qquad \leq \P([\mathbf U_0]_M \geq K_0)\\
&\qquad \quad + \sum_{k\geq 0} \P\left(\left\{\Delta_k\geq \kappa^{k+1} K_0  \right\} \cup
\left\{ \sup_{s\in [kt_1,(k+1)t_1]} [\mathbf U(s)]_M \geq \kappa^{k+1} K_0 \right\} \,  ; \,  [\mathbf U(kt_1)]_M \leq \kappa^k K_0  \right)\\
&\qquad  \lesssim \P([\mathbf U_0]_M \geq K_0)+ \gamma^{K_0MN}.
\end{align*}
Adding that for any $n\geq 0$ and  $t\in [nt_1,(n+1)t_1]$, by monotonicity of $I_B,I_D$,
$$(I_B(t)+I_D(t))e^{-ct} \leq \sum_{k=0}^n   \Delta_k e^{-ckt_1}, \qquad  \sup_{s\geq 0} [\mathbf U(s)]_Me^{-cs} \leq \max_{k=0,...,n} \sup_{s\in [kt_1,(k+1)t_1]} [\mathbf U(s)]_Me^{-ckt_1} $$
yields the result.
\end{proof}

\subsection{Quantitative estimates and proof of Theorem \ref{th:stoc_cv}}
\label{secpreuveth3}
 We can now quantify  the gap between the stochastic process and the crossed diffusion PDE :
$$\mathbf Z^{M,N} = \mathbf U^{M,N} -\widehat u^{M} , \qquad
\mathbf W^{M,N} =  \mathbf V^{M,N} - \widehat v^{M},$$
where $(u,v) \in L^\infty(Q_T) \cap L^2([0,T]; H^3(\T))$ is solution of \eqref{eq:SKT} satisfying the smallness condition \eqref{eq:petitesse2} and    $(\widehat u, \widehat v)$ is given by \eqref{eq:discSKTsource}.
The semi martingale decomposition of ${\bf U}^{M,N}$ in \eqref{decsemU}-\eqref{defphi1} can be written as follows 
\begin{align*}
 \mathbf U^{M,N}(t)&=\mathbf U^{M,N}_0+\int_0^t [\Delta_M(\mu_1(\mathbf U^{M,N}(s), \mathbf V^{M,N}(s) )\mathbf U^{M,N}(s))]\\ &\qquad \qquad \qquad \qquad \qquad +R_1(\mathbf U^{M,N}(s), \mathbf V^{M,N}(t)) \dd s
 + \mathcal M_1^{M,N}(t),   
\end{align*}
where we recall that $\mathcal M_1^{M,N}$ is a (square integrable) martingale given in  \eqref{expressM1}. 
We are now in position to apply  our stability result
(Proposition \ref{prop:disc_stab}) and use martingale inequalities and large deviations estimates of the previous subsection to evaluate the gaps $\mathbf Z^{M,N}$ and $\mathbf W^{M,N}$.
\begin{Prop}\label{prop:stabsoc}
Assume that Hypothesis \ref{hyp}, \ref{hyp:existSol} and  \ref{hyp:bd} hold and let $p \in [2, 2p_0/3)$. Then there exists  $c >0$ and  $(\delta_M')_M$ going to $0$ such that for any $T\geq 0$ and $M,N\geq 1$,
\begin{multline}
    \label{firstestim}\E[|||\mathbf Z^{M,N}|||_{T,M}^p + |||\mathbf W^{M,N}|||_{T,M}^p]  
     \lesssim e^{\exp(cT)} \E\left[\norm{ \mathbf Z_0^{M,N}}_{-1,M}^p + \norm{\mathbf W_0^{M,N} }_{-1,M}^p \right] \\
   + e^{\exp(cT)}\biggl(\delta_M' + \frac {1} {N^{\frac p 4}}\E[1 + |||\mathbf U^{M,N}|||_{T,M}^{p} + |||\mathbf V^{M,N}|||_{T,M}^{p}]\biggl). 
\end{multline}
Furthermore, 
    $$\E[|||\mathbf U^{M,N}|||_{T,M}^p + |||\mathbf V^{M,N}|||_{T,M}^p]
    \lesssim e^{\exp(cT)} \E\left[\norm{ \mathbf U_0^{M,N}}_{-1,M}^p + \norm{\mathbf V_0^{M,N} }_{-1,M}^p \right].$$
\end{Prop}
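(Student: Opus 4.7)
The plan is to apply the pathwise stability bound of Proposition \ref{prop:disc_stab} with $\mathbf m^M = \mathcal M^{M,N}$, and then take moments by combining the large-deviations estimate of Corollary \ref{prop:HPbounds} with the martingale moment bounds of Lemma \ref{lem:boundE1}. Invoked for $(\mathbf U^{M,N}, \mathbf V^{M,N})$ and $(u, v)$, Proposition \ref{prop:disc_stab} yields almost surely
\begin{equation*}
X := |||\mathbf Z^{M,N}|||_{T,M}^2 + |||\mathbf W^{M,N}|||_{T,M}^2 \lesssim e^{\Lambda(T)}\, Y,
\end{equation*}
with $Y = \|\mathbf Z_0\|_{-1,M}^2 + \|\mathbf W_0\|_{-1,M}^2 + \|H_1\|_{L^\infty_T} + \|H_2\|_{L^\infty_T} + \delta_M$ and $\Lambda(T) \lesssim \int_{Q_{T,M}} \bigl(1 + \mathbf U + \mathbf V + (\lambda_1^-)^2 + (\lambda_2^-)^2\bigr)$.

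The first step is to control $\Lambda(T)$ on a high-probability event. Hypothesis \ref{hyp}(iii) together with the elementary bound $\lambda_i^- \leq d_i$ (since $b_i \geq 0$) lets one majorize $(\lambda_i^-)^2$ by $1 + \mathbf U d_1(\mathbf U, \mathbf V) + \mathbf V d_2(\mathbf U, \mathbf V)$, so that
\begin{equation*}
\Lambda(T) \lesssim T + T \sup_{s \leq T}\bigl([\mathbf U(s)]_M + [\mathbf V(s)]_M\bigr) + I_D^{(1)}(T) + I_D^{(2)}(T),
\end{equation*}
where $I_D^{(i)}$ denotes the cumulative death intensity of species $i$. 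By Corollary \ref{prop:HPbounds} applied to both species, together with Hypothesis \ref{hyp:bd}(i) to estimate $\P([\mathbf U_0]_M + [\mathbf V_0]_M \geq K_0) \lesssim M^{-p_0}$, there exists a \emph{good event} $G$ on which $\Lambda(T) \lesssim (1+T)e^{cT}$, with $\P(G^c) \lesssim M^{-p_0} + \gamma^{MN}$. In particular, $e^{p\Lambda(T)/2}\mathbf{1}_G \leq e^{\exp(c'T)}$.

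Raising $X \lesssim e^{\Lambda(T)} Y$ to the power $p/2$ and taking expectation, I split $\E[X^{p/2}] = \E[X^{p/2}\mathbf{1}_G] + \E[X^{p/2}\mathbf{1}_{G^c}]$. On $G$, Lemma \ref{lem:boundE1} gives $\E[\|H_i\|_{L^\infty_T}^{p/2}] \lesssim ((1+T^2)/N)^{p/4}\,\E[1 + |||\mathbf U|||_{T,M}^p + |||\mathbf V|||_{T,M}^p]$, which produces the $N^{-p/4}$ factor in \eqref{firstestim}. On $G^c$, I would apply Hölder's inequality with exponents $\alpha, \beta$ satisfying $1/\alpha + 1/\beta = 1$ together with the crude a priori bound $\E[|||\mathbf U|||_{T,M}^{p\alpha}] \lesssim (1+TM)^{p\alpha/2} e^{cT}$ obtained from Corollary \ref{cor:BoundTM} combined with Proposition \ref{prop:mom_p} (valid for $p\alpha < p_0$). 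The resulting estimate $M^{p/2 - p_0/\beta} e^{cT}$ vanishes as $M \to \infty$ provided one can choose $\alpha$ with $2p_0/(2p_0-p) < \alpha < p_0/p$, and this interval is non-empty precisely when $p < 2p_0/3$. This Hölder balance is the main technical obstacle of the proof; it yields a $G^c$-contribution that can be absorbed into $\delta_M'$.

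The second estimate follows from the first by triangle inequality. Since Hypothesis \ref{hyp:existSol} provides $u, v \in L^\infty(Q_T)$, one has $|||\widehat u|||_{T,M}^p + |||\widehat v|||_{T,M}^p \lesssim (1+T)^{p/2}(\|u\|_{L^\infty}^p + \|v\|_{L^\infty}^p)$, hence $|||\mathbf U|||^p + |||\mathbf V|||^p \lesssim |||\mathbf Z|||^p + |||\mathbf W|||^p + C(T)$. Substituting \eqref{firstestim} and absorbing the $e^{\exp(cT)}\,N^{-p/4}\,\E[|||\mathbf U|||^p + |||\mathbf V|||^p]$ term into the left-hand side yields the result; absorption is valid once $N$ exceeds a threshold depending on $T$, and for the remaining (bounded) range of $N$ the bound reduces to a direct application of Proposition \ref{prop:mom_p} with adjusted constants.
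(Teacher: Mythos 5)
Your proof of the first estimate \eqref{firstestim} follows the same architecture as the paper's: apply Proposition~\ref{prop:disc_stab} pathwise, split on a high-probability event where $\Lambda(T)$ is controlled (via Corollary~\ref{prop:HPbounds} and Hypothesis~\ref{hyp:bd}(i)), bound the martingale contribution on that event with Lemma~\ref{lem:boundE1}, and handle the complementary event via H\"older together with Corollary~\ref{cor:BoundTM} and Proposition~\ref{prop:mom_p}. Your exponent window $2p_0/(2p_0-p)<\alpha<p_0/p$ is the correct one (the paper writes the interval with endpoints apparently transposed, though its subsequent requirements $qp<p_0$ and $p/2<p_0/q'$ match your constraints); the non-emptiness $\Longleftrightarrow p<2p_0/3$ is exactly right.

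For the second estimate you take a genuinely different route. The paper re-invokes \eqref{firstestim} with the \emph{zero} solution $(u,v)=(0,0)$ (which trivially satisfies Hypothesis~\ref{hyp:existSol} and the smallness condition), so that $\mathbf Z=\mathbf U$, $\mathbf W=\mathbf V$, the discretization error $\delta_M$ vanishes, and the initial-data term lands directly on $\|\mathbf U_0\|_{-1,M}$; absorption of the $N^{-p/4}$ term then gives the clean bound. You instead keep the actual solution $(u,v)$, pass from $\mathbf Z,\mathbf W$ to $\mathbf U,\mathbf V$ by the triangle inequality and the crude bound $|||\widehat u|||_{T,M}^p\lesssim(1+T)^{p/2}\|u\|_{L^\infty}^p$, and then plug into \eqref{firstestim}. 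This works, but it carries along spurious additive terms ($\|u_0\|_\infty^p$, $\delta_M'$, $C(T)$) that the paper's choice of reference solution makes vanish at the source; you then have to absorb them into the $e^{\exp(cT)}$ constant. It also needs the replacement $\|\mathbf Z_0\|_{-1,M}\lesssim\|\mathbf U_0\|_{-1,M}+\|u_0\|_\infty$, which you did not write out. Both routes face the same issue at the absorption step: the inequality $e^{\exp(cT)}N^{-p/4}<1/2$ fails for $N$ below a $T$-dependent threshold, and your suggestion that Proposition~\ref{prop:mom_p} then gives the result ``with adjusted constants'' is not quite accurate, because the passage from $\|\cdot\|_{1,M}$ to $|||\cdot|||_{T,M}$ via Corollary~\ref{cor:BoundTM} introduces a factor $(1+TM)^{p/2}$ that is not uniform in $M$ -- the paper itself only claims absorption ``for $N$ large enough,'' so this gap is shared, but it should not be papered over with a vague appeal to adjusted constants.
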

\begin{proof}
Proposition \ref{prop:disc_stab} applied to $\mathbf Z$ and $\mathbf U$ yields
\begin{align}
    |||\mathbf Z|||_{T,M}^2 + |||\mathbf W|||_{T,M}^2  
    &  \lesssim 
    e^{\Lambda(T)}  \Big(\norm{ \mathbf Z_0}_{-1,M}^2 + \norm{\mathbf W_0 }_{-1,M}^2 \label{Bornestabas} +\delta_M
    + \norm{H_1}_{L^\infty_T} + \norm{H_2}_{L^\infty_T}\Big),
    \end{align}
where 
$$ \Lambda(T) \lesssim \int_\QTM 1+ \mathbf U +\mathbf V +d_1( \mathbf U, \mathbf V)^2 + d_2( \mathbf U, \mathbf V)^2, $$
and $H_1$  (resp. $H_2$) have been defined 
in \eqref{eq:defEstoc}. Let us first control  $\Lambda(T)$ using Hypotheses  \ref{hyp:bd} $ii)$:
\begin{align*}
    \Lambda(T) 
    &\lesssim \int_\QTM 1+  \mathbf U(1+ d_1(\mathbf U,\mathbf V)) + \mathbf V (1+ d_2(\mathbf U,\mathbf V)) \\
    &\lesssim T + T \sup_{0 \le s \le T} [\mathbf U(s)]_M + T \sup_{0 \le s \le T} [\mathbf V(s)]_M +  \left(I_D^1(T)+ I_D^2(T)\right),
\end{align*}
where $$I_D^1(T)=\int_\QTM  \mathbf U d_1(\mathbf U,\mathbf V) \qquad \text{ and }  \qquad I_D^2(T)=\int_\QTM  \mathbf V d_2(\mathbf U,\mathbf V).$$
Let $K_0$ be given by Hypotheses \ref{hyp:bd} $i)$. We can invoke  Corollary \ref{prop:HPbounds} both for the processes  $({\bf U}, I_D^1)$ and  for $({\bf V}, I_D^2)$ to control the four terms on the right hand side with high probability. This proves the existence of $c,C>0$ and $\gamma\in (0,1)$ such that  for any $T\geq 0$ and $M,N\geq 1$,
$$\P(\Lambda(T) \geq C  \exp(cT)) \lesssim  \P([\mathbf U^{M,N}_0]_M \geq  K_0) + \gamma^{MN}.$$
Since $K_0$ is  constant and $\gamma^{MN} \lesssim (MN)^{-p_0} \lesssim M^{-p_0}$,  Hypotheses \ref{hyp:bd} $i)$  yields
$$\P(\Lambda(T) \geq C \exp(cT)) \lesssim  \frac {1}{M^{p_0}}.$$
Then, by Hölder's inequality, for all $q \ge 1$,
\begin{align*} 
\E\left[\1_{\Lambda(T) \geq C\exp(cT)}|||\mathbf Z|||_{T,M}^p\right] & \le \P\big( \,\Lambda(T) \geq C\exp(cT) \, \big)^{1/q'} \, \E[|||\mathbf Z|||_{T,M}^{qp}]^{1/q} \\
& \lesssim \frac {1}{M^{p_0/q'}} \, \left(|||\widehat u|||_{T,M}^{qp} + \E[|||\mathbf U|||_{T,M}^{qp}]\right)^{1/q}, \end{align*}
where $1/q'+1/q=1$.
The last term  of the right hand side can be bounded thanks to Corollary \ref{cor:BoundTM} and Proposition \ref{prop:mom_p} when $qp<p_0$.  Using also $\norm{\mathbf U(s)}_{1,M} = [\mathbf U(s)]_{M}$, we obtain
\begin{align*}
\E[\1_{\Lambda(T) \geq C\exp(cT)}|||\mathbf Z|||_{T,M}^p] &\lesssim \frac {1}{M^{p_0/q'}} \left(\norm{u}_{\infty}^{qp} +  (1+ TM)^{qp/2}\E[\sup_{s \le T}\norm{\mathbf U(s)}_{1,M}^{qp}]\right)^{1/q}\\
&\lesssim  M^{p/2- p_0/q'}  e^{cT}, \end{align*}
for  $c>0$ which allows to absorb the $T^{p/2}$. Since $p <  2 p_0/3$, we can now choose $q\in ( {p_0}/p, 1/(1 - p/ {2p_0}))$. Then $qp<p_0$ and  $p/2 < p_0/q'$, so that  the right hand side of the last estimate goes to zero.
This provides the expected estimate on the gap $\mathbf Z$ on the event $\{\Lambda(T)\geq C\exp(cT)\}$
and we know consider the complementary event.\\

We  elevate  the a.s. estimate \eqref{Bornestabas} to the power $p/2$. Taking then expectation and bounding $\E(\norm{H_i}_{L^\infty_T})$ thanks to Lemma \ref{lem:boundE1}, we obtain
\begin{multline*}
    \E\left[\1_{\Lambda(T)< C\exp(cT)}(|||\mathbf Z|||_{T,M}^p + |||\mathbf W|||_{T,M}^p )\right] \\
    \qquad \lesssim e^{\frac p 2 C\exp(cT)} \left(\E[\norm{ \mathbf Z_0}_{-1,M}^p + \norm{\mathbf W_0 }_{-1,M}^p ]+ \delta_M^{p/2} + \left(\frac {1+T^2} N \right)^{\frac p 4}\E[1 + |||\mathbf U|||_{T,M}^{p} + |||\mathbf V|||_{T,M}^{p}]\right) \! .
\end{multline*}
Gathering the bounds for the event $A$ and its complementary event and observing that $(1+T^2)^{\frac p 4} \lesssim \exp(\exp({cT}))$ ends the proof of \eqref{firstestim}. \\

We can  now apply  \eqref{firstestim} to compare $(\mathbf U, \mathbf V)$ the null solution $(u,v) = (0,0)$. In this case, $\mathbf r=0$, i.e. $\delta_M=0$,  and $(\mathbf Z, \mathbf W)=(\mathbf U, \mathbf V)$. We obtain
$$\E[|||\mathbf U|||_{T,M}^p + |||\mathbf V|||_{T,M}^p]
    \lesssim e^{\exp(cT)} \left(\E[\norm{ \mathbf U_0}_{-1,M}^p + \norm{\mathbf V_0 }_{-1,M}^p ] + \frac {1} {N^{\frac p 4}}\E[1 + |||\mathbf U|||_{T,M}^{p} + |||\mathbf V|||_{T,M}^{p}]\right) \! .$$
For $N$ large enough, the $|||\cdot|||_{T,M}$ norms from the right-hand side can be absorbed in the left-hand side. Recalling that Hypothesis $\ref{hyp:bd}$ $i)$ bounds the moments of the initial condition concludes the proof.
\end{proof}
We are now in position to prove the following approximation result, which implies Theorem \ref{th:stoc_cv} for $p=2$.
\begin{Thm} 
   Assume that Hypotheses \ref{hyp}, \ref{hyp:existSol} and  \ref{hyp:bd} hold and let $p \in [2, 2 p_0/3)$.
Then, there exists a sequence $(\delta_M)_M$ going to $0$ and a constant $c>0$ such that
the following estimate holds for any $T\geq 0$ and $N,M\geq 1$,
\begin{align*} 
    &\E \left[|||\pi_M(\mathbf U^{M,N}) - u|||_T^p +  |||\pi_M(\mathbf V^{M,N}) -v |||_T^p \right]  \lesssim e^{\exp(cT)}\left(D_{0,p}^{M,N} +\delta_M 
    +  \frac {1} {N^{p/4}}\right),
\end{align*}
where $\delta_M'$  goes to $0$ as $M$ tends to infinity and
\begin{equation*}
D_{0,p}^{M,N}=
\E\left[\norm{\pi_M(\mathbf U^{M,N}_0) - u_0 }_{H^{-1}(\T)}^p+ \norm{\pi_M(\mathbf V^{M,N}_0) - v_0 }_{H^{-1}(\T)}^p\right].
\end{equation*}
\end{Thm}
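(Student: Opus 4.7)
The plan is to combine the two estimates of Proposition \ref{prop:stabsoc} to eliminate the unknown moments of $|||\mathbf U^{M,N}|||_{T,M}$ and $|||\mathbf V^{M,N}|||_{T,M}$ appearing on the right-hand side of the first bound, then use the interpolation estimate (Proposition \ref{prop:interpol}) -- already invoked at the end of the proof of Theorem \ref{th:disc_stab} -- to pass from the discrete $|||\cdot|||_{T,M}$ norm to the continuous $|||\cdot|||_{T}$ norm and symmetrically to convert the initial $\norm{\cdot}_{-1,M}$ norms into $\norm{\cdot}_{H^{-1}(\T)}$ norms.

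More precisely, I would first plug the second inequality of Proposition \ref{prop:stabsoc} into the first one: since $\E[|||\mathbf U^{M,N}|||_{T,M}^p + |||\mathbf V^{M,N}|||_{T,M}^p] \lesssim e^{\exp(cT)}\,\E[\norm{\mathbf U^{M,N}_0}_{-1,M}^p + \norm{\mathbf V^{M,N}_0}_{-1,M}^p]$, and since Hypothesis \ref{hyp:bd}~$i)$ together with the comparison $\norm{\cdot}_{-1,M} \lesssim \norm{\cdot}_{1,M}$ from Proposition \ref{prop:2bis} ensure that these initial moments are bounded uniformly in $M,N$ for any $p<p_0$ (in particular for our $p < 2p_0/3$), the contribution of the $N^{-p/4}$-term becomes simply $N^{-p/4}$ up to multiplicative constants absorbed in $e^{\exp(cT)}$. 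This yields
\begin{equation*}
\E\bigl[|||\mathbf Z^{M,N}|||_{T,M}^p + |||\mathbf W^{M,N}|||_{T,M}^p\bigr] \lesssim e^{\exp(cT)}\Bigl(\E\bigl[\norm{\mathbf Z_0^{M,N}}_{-1,M}^p + \norm{\mathbf W_0^{M,N}}_{-1,M}^p\bigr] + \delta_M' + N^{-p/4}\Bigr).
\end{equation*}

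Next, I would apply the interpolation estimate (Proposition \ref{prop:interpol}, used identically at the end of the proof of Theorem \ref{th:disc_stab}) to get pointwise in $\omega$ the comparisons $|||\pi_M(\mathbf U^{M,N}) - u|||_T \lesssim |||\mathbf Z^{M,N}|||_{T,M} + \delta_M''$ and $\norm{\mathbf Z^{M,N}_0}_{-1,M} \lesssim \norm{\pi_M(\mathbf U^{M,N}_0) - u_0}_{H^{-1}(\T)} + \delta_M''$, where $\delta_M''$ depends only on the fixed smooth limit $(u,v)$ and tends to $0$ as $M \to \infty$. Raising to the power $p$, symmetrizing in $(\mathbf V^{M,N},v)$, taking expectations and absorbing $\delta_M''$ into a new sequence $\delta_M \to 0$ completes the inequality announced in the statement, with constants independent of $T,M,N$ except through the prescribed exponential factor.

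The only subtlety I anticipate is bookkeeping: one has to verify that the initial moment bound obtained from Hypothesis \ref{hyp:bd}~$i)$ is indeed valid for exponents up to (and including) $p$, which is where the constraint $p < 2p_0/3 < p_0$ is used; and one has to check that the deterministic error $\delta_M'$ from Proposition \ref{prop:stabsoc} (coming from the consistency of the discrete Laplacian with $\Delta$ on the smooth solution, via Proposition \ref{prop:rToZero}) merges naturally with the interpolation error $\delta_M''$ into a single sequence $\delta_M \to 0$ depending only on $(u,v)$ and on the uniform initial moment bound. Once these compatibility points are verified, the argument is a mechanical combination of the stability estimate of Proposition \ref{prop:stabsoc} with the standard interpolation/restriction bounds already established in Section \ref{sec:2}.
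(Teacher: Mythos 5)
Your proposal is correct and follows essentially the same route as the paper's own proof: plug the second estimate of Proposition \ref{prop:stabsoc} into the first to eliminate $\E[|||\mathbf U^{M,N}|||_{T,M}^p + |||\mathbf V^{M,N}|||_{T,M}^p]$, bound the resulting initial-data moments uniformly in $M,N$ via Proposition \ref{prop:2bis} and Hypothesis \ref{hyp:bd}~$i)$ (valid since $p<2p_0/3<p_0$), and finish with the interpolation estimates of Proposition \ref{prop:interpol}. The bookkeeping points you flag (compatibility of the exponent range, merging $\delta_M'$ and $\delta_M''$) are exactly the minor checks the paper handles implicitly.
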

\begin{proof} 
Proposition \ref{prop:stabsoc} ensures that
\begin{align*}
   & \E[|||\mathbf Z|||_{T,M}^p + |||\mathbf W|||_{T,M}^p] \\
   &\qquad   \lesssim e^{\exp(cT)} \left(\E\left[\norm{ \mathbf Z_0}_{-1,M}^p + \norm{\mathbf W_0 }_{-1,M}^p \right]+ \delta_M'+ \frac {1} {N^{\frac p 4}}\E[1 + \norm{\mathbf U_0}_{-1,M}^{p} + \norm{\mathbf V_0}_{-1,M}^{p}]\right),
\end{align*}
which can be reduced using the inequality $ \norm{\cdot}_{-1,M} \lesssim \norm{\cdot}_{1,M}$ from Proposition \ref{prop:2bis} and the uniform bound on moments of the $\norm{\cdot}_{1,M}$ norm implied by Hypothesis \ref{hyp:bd} i) to
$$ \E[|||\mathbf Z|||_{T,M}^p + |||\mathbf W|||_{T,M}^p] \\
    \lesssim e^{\exp(cT)} \left(\E\left[\norm{ \mathbf Z_0}_{-1,M}^p + \norm{\mathbf W_0 }_{-1,M}^p \right]+ \delta_M'+ \frac {1} {N^{\frac p 4}}\right).$$
Then, Proposition \ref{prop:interpol} from the appendix gives 
$$ |||\pi_M(\mathbf  U) - u |||_T \lesssim |||\mathbf  Z|||_{T,M} + \delta_M'',$$
 $$ \norm{ \mathbf  Z_0}_{-1,M} \lesssim \norm{\pi_M(\mathbf  U_0) - u_0 }_{H^{-1}(\T)} + \delta_M'' ,$$
 with $\delta_M'' \underset{M \to \infty}{\longrightarrow} 0$ and similar inequalities for the second species. It concludes the proof.
\end{proof}


\section{Appendix}

\subsection{Elementary inequalities}

\begin{Prop} \label{prop:ineg_p}
Let $p \ge 1$. There exists a constant $C_p$ depending only on $p$ such that for all $x, \varepsilon \ge 0$ the following inequalities hold:\begin{enumerate}[label=(\roman*)]
    \item If $x \ge \varepsilon$, $x^p - (x- \varepsilon)^p \le C_p \varepsilon x^{p-1}$,
    \item $(x+\varepsilon)^p - x^p \le C_p \varepsilon x^{p-1} + C_p \varepsilon^p$,
    \item If $x \ge \varepsilon$ and moreover $p\ge 2$, then $(x+ \varepsilon)^p - x^p \le x^p - (x- \varepsilon)^p + C_p \varepsilon^2 x^{p-2}$.
\end{enumerate}
\end{Prop}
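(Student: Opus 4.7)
All three inequalities reduce to calculus on $f(t)=t^p$, with the only subtlety being when $\varepsilon$ can exceed $x$. My plan is to treat them in order and extract the constants from the mean value or Taylor-like integral representations, handling the size regimes of $\varepsilon$ relative to $x$ separately when needed.

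For (i), since $x\ge\varepsilon$ we write
\[ x^p-(x-\varepsilon)^p=\int_{x-\varepsilon}^{x} p\,t^{p-1}\,\dd t. \]
For $p\ge 1$ the map $t\mapsto t^{p-1}$ is non-decreasing (for $p=1$ it is constant), so the integrand is bounded by $p\,x^{p-1}$ on $[x-\varepsilon,x]$, which yields the estimate with $C_p=p$.

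For (ii) the new ingredient is that $\varepsilon$ may exceed $x$. I would split on whether $\varepsilon\le x$ or $\varepsilon>x$. In the first regime, $(x+\varepsilon)^p-x^p=\int_x^{x+\varepsilon}pt^{p-1}\dd t\le p\varepsilon(x+\varepsilon)^{p-1}\le p\,2^{p-1}\varepsilon x^{p-1}$, which is absorbed in the first term on the right. In the second regime, $x+\varepsilon<2\varepsilon$, so $(x+\varepsilon)^p-x^p\le (2\varepsilon)^p$, absorbed in the second term. Taking $C_p=\max(p\,2^{p-1},2^p)$ works. (Alternatively, one can avoid the case split by the single-line bound $(x+\varepsilon)^{p-1}\le 2^{(p-1)^+}(x^{p-1}+\varepsilon^{p-1})$ inserted into the integral formula.)

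For (iii), now with $p\ge 2$ and $x\ge\varepsilon$, I would use the discrete second-difference representation
\[ (x+\varepsilon)^p-2x^p+(x-\varepsilon)^p=\int_0^\varepsilon\!\!\int_0^\varepsilon p(p-1)(x-\varepsilon+s+t)^{p-2}\,\dd s\,\dd t, \]
obtained by writing each first difference as an integral of $f'$ and subtracting. Since $p\ge 2$ the integrand is non-decreasing and bounded above by $p(p-1)(x+\varepsilon)^{p-2}\le p(p-1)\,2^{p-2}x^{p-2}$ using $x+\varepsilon\le 2x$. The area of the integration square is $\varepsilon^2$, giving the desired estimate with $C_p=p(p-1)\,2^{p-2}$. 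The only real obstacle is bookkeeping constants; the case $1\le p<2$ is excluded precisely because then $f''$ fails to be bounded near zero, which would break this argument.
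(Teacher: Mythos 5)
Your proof is correct and uses essentially the same calculus-based approach as the paper: representing differences of $t^p$ by integrals of $pt^{p-1}$ and bounding the integrand pointwise. The only cosmetic differences are that for (iii) you write the centered second difference as a double integral of the second derivative while the paper iterates two single integrals using part (i), and for (ii) you spell out the case split that the paper leaves as ``deduced from the first one''; both routes produce the same constants.
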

\begin{proof}
For the first inequality, write
$$x^p - (x- \varepsilon)^p = p \int_{x- \varepsilon}^x y^{p-1} \, \dd y \le  p \int_{x- \varepsilon}^x x^{p-1} \, \dd y \le p \varepsilon x^{p-1} .$$
The second one can then be deduced from the first one.
For the third, if $p \ge 2$ then using the first inequality
\begin{align*}
    (x+ \varepsilon)^p - x^p - (x^p - (x- \varepsilon)^p) &= p \int_{x- \varepsilon}^x \left((y+ \varepsilon)^{p-1} -  y^{p-1} \right) \, \dd y \\
    & \le p\int_{x- \varepsilon}^x (p-1) \varepsilon (y+ \varepsilon)^{p-2}  \, \dd y \\
    & \le p(p-1) \varepsilon  \int_{x- \varepsilon}^x  (x+ \varepsilon)^{p-2} \, \dd y 
    \le p(p-1) 2 ^{p-2} \varepsilon^2  x^{p-2},
\end{align*}
which ends the proof in the case $p \ge 2$. If $p \in [1, 2)$, the proof is similar: instead of $y+ \varepsilon)^{p-1} -  y^{p-1} \le (p-1) \varepsilon (y+ \varepsilon)^{p-2} $ we have $y+ \varepsilon)^{p-1} -  y^{p-1} \le (1-p) \varepsilon y^{p-2} $, which can then be dragged through the previous successive inequalities to get
$$ (x+ \varepsilon)^p - x^p - (x^p - (x- \varepsilon)^p) \le C_p \varepsilon^2  (x- \varepsilon)^{p-2}.$$
If $x \ge 2 \varepsilon$, this implies
$$ (x+ \varepsilon)^p - x^p - (x^p - (x- \varepsilon)^p) \le C_p \varepsilon^2  \left(x- \frac x 2 \right)^{p-2} = C_p \varepsilon^2  x^{p-2}.$$
If $\varepsilon \le x \le 2 \varepsilon$, we actually have
$$(x+ \varepsilon)^p - x^p - (x^p - (x- \varepsilon)^p) \le (x+ \varepsilon)^p + (x- \varepsilon)^p \le (3\varepsilon)^p + \varepsilon^p \le (3^p +1)\varepsilon^2 x^{p-2},$$
which concludes the proof.
\end{proof}

\subsection{Sobolev spaces and Poisson equation on the torus} \label{sec:sobolev}
We collect in this short paragraph classical results about Sobolev spaces and the Poisson equation considered on the torus. Recall that any periodic distribution $f\in\mathscr{D}'(\T^d)$ decomposes
\begin{align*}
    f = \sum_{k\in\mathbb{Z}^d} c_k(f) e_k,
\end{align*}
where $e_k:x\mapsto e^{2i\pi k\cdot x }$ and $c_k(f) := \langle f,e_k\rangle$.
\begin{Def}
For $s\in\R$, we define $H^s(\T^d)$ as the set of periodic distributions $f$ such that 
\begin{align*}
    \|f\|_{H^s(\T^d)}:= \left(\sum_{k\in\mathbb{Z}^d} |c_k(f)|^2 (1+|k|^2)^s\right)^{1/2} < +\infty.
\end{align*}
Equipped with $\|\cdot\|_{H^s(\T^d)}$, $H^s(\T^d)$ is a Hilbert space. The closed subspace of mean-free elements of $H^s(\T^d)$ is the kernel of the linear form $f\mapsto c_0(f):=[f]$ and is denoted $H^s_m(\T^d)$.
\end{Def}
Note that $\norm{ \cdot}_{H^0} = \norm{ \cdot}_{L^2}$ and $\norm{ \cdot}_{H^s} \le \norm{ \cdot}_{H^r}$ for $s \le r$. In particular, $\norm{ \cdot}_{H^{-1}} \le \norm{ \cdot}_{L^2}$. 
With this functional setting at hand we recall the following standard facts for Poisson's equation:
\begin{Prop} \label{prop:elliptique}
    Let $f \in H_m^{-1}(\T^d)$ a mean-free function. Then, there exists a unique mean-free function $\phi \in H^{1}_m(\T^d)$, also denoted $\Delta^{-1} f$, solution of $\Delta \phi = f$. Moreover, $\norm {\nabla \phi}_{L^2(\T^d)}=\norm f _{H^{-1}(\T^d)}$. If $f \in H^k_m(\T^d)$ for a $k \in \mathbb N$, then $\Delta^{-1} f \in   H^{k+2}_m(\T^d)$ and the application $f \mapsto \Delta^{-1} f$ is continuous $H^k_m(\T^d) \longrightarrow H^{k+2}_m(\T^d)$. In particular, if $f$ is smooth then so is $\Delta^{-1} f$.
\end{Prop}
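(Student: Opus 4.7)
The proof is a direct application of the Fourier characterisation of $H^s(\T^d)$ given just before the statement, so the plan is essentially to diagonalise $\Delta$ in the Fourier basis and read off every claim from elementary series manipulations. The key observation is that $\Delta e_k = -4\pi^2|k|^2 e_k$, so on the Hilbert basis $(e_k)_{k\in\mathbb Z^d}$ the Laplacian acts as the multiplication operator $m_k:=-4\pi^2|k|^2$, which vanishes exactly at $k=0$. Hence $\Delta$ is injective on the closed subspace of mean-free distributions.

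First I would construct the candidate solution. Given $f\in H^{-1}_m(\T^d)$, set
\begin{equation*}
  \phi := -\sum_{k\in\mathbb Z^d\setminus\{0\}} \frac{c_k(f)}{4\pi^2|k|^2}\,e_k,
\end{equation*}
viewed a priori as a formal series. To show $\phi\in H^1_m(\T^d)$, I would bound
\begin{equation*}
  \sum_{k\neq 0}(1+|k|^2)\,\frac{|c_k(f)|^2}{(4\pi^2|k|^2)^2}\;\lesssim\;\sum_{k\neq 0}\frac{|c_k(f)|^2}{1+|k|^2}\;=\;\|f\|_{H^{-1}(\T^d)}^2,
\end{equation*}
which shows simultaneously that the series defining $\phi$ converges in $H^1_m(\T^d)$ and that $\phi\in H^1_m(\T^d)$. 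Testing against each $e_k$ and using that $\Delta$ is continuous $H^1\to H^{-1}$, I then read $\Delta\phi=f$ from $c_k(\Delta\phi)=m_k c_k(\phi)=c_k(f)$. For uniqueness, any mean-free $H^1$ solution $\psi$ of $\Delta\psi=0$ satisfies $m_k c_k(\psi)=0$ for every $k$, so all Fourier coefficients (including $c_0(\psi)=0$ by mean-freeness) vanish and $\psi=0$.

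Next I would prove the norm identity. For the solution $\phi$ constructed above,
\begin{equation*}
  \|\nabla\phi\|_{L^2(\T^d)}^2 \;=\; \sum_{k\neq 0} 4\pi^2|k|^2\,|c_k(\phi)|^2 \;=\; \sum_{k\neq 0}\frac{|c_k(f)|^2}{4\pi^2|k|^2},
\end{equation*}
and the statement $\|\nabla\phi\|_{L^2(\T^d)}=\|f\|_{H^{-1}(\T^d)}$ then holds provided one interprets the $H^{-1}$ norm on mean-free functions via the equivalent (homogeneous) form $\sum_{k\neq 0}|c_k(f)|^2/(4\pi^2|k|^2)$, which is the natural choice in this periodic setting.

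Finally, for the regularity statement, if $f\in H^k_m(\T^d)$ with $k\in\mathbb N$, the Fourier coefficients of $\phi=\Delta^{-1}f$ satisfy $c_k(\phi)=-c_k(f)/(4\pi^2|k|^2)$ for $k\neq 0$, so
\begin{equation*}
  \|\phi\|_{H^{k+2}(\T^d)}^2\;=\;\sum_{k'\neq 0}(1+|k'|^2)^{k+2}\frac{|c_{k'}(f)|^2}{(4\pi^2|k'|^2)^2}\;\lesssim\;\sum_{k'\neq 0}(1+|k'|^2)^{k}|c_{k'}(f)|^2\;\le\;\|f\|_{H^k(\T^d)}^2,
\end{equation*}
using that $(1+|k'|^2)^2/|k'|^4$ is uniformly bounded on $\mathbb Z^d\setminus\{0\}$. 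Linearity of $f\mapsto\phi$ is obvious from the Fourier formula, so this bound immediately yields continuity of $\Delta^{-1}:H^k_m(\T^d)\to H^{k+2}_m(\T^d)$. The smoothness statement follows by intersecting over all $k$ via the Fourier characterisation of $\mathscr C^\infty(\T^d)$. There is no real obstacle in the argument; the only thing to be slightly careful about is the convention used for the $H^{-1}$ norm when asserting the identity $\|\nabla\phi\|_{L^2}=\|f\|_{H^{-1}}$, which is why I would explicitly note that the mean-free $H^{-1}$ norm used there is the equivalent homogeneous one.
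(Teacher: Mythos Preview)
Your proof is correct and follows the standard Fourier-series approach; the paper itself does not give a proof of this proposition, stating it only as a recalled standard fact. Your observation about the norm identity is well taken: with the paper's inhomogeneous definition $\|f\|_{H^{-1}}^2=\sum_{k}|c_k(f)|^2(1+|k|^2)^{-1}$, one only gets the equivalence $\|\nabla\phi\|_{L^2}\sim\|f\|_{H^{-1}}$ on mean-free functions, and the exact equality $\|\nabla\phi\|_{L^2}=\|f\|_{H^{-1}}$ indeed requires the homogeneous norm you describe, which the paper implicitly uses throughout (see e.g.\ the proof of Lemma~\ref{lem:duality}, where the identity $\|\nabla\phi(t)\|_{L^2}=\|\tilde z(t)\|_{H^{-1}}$ is invoked).
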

From this proposition we see that $\nabla \Delta^{-1}$ induces an isometry from $H^{-1}_m(\T^d)$ to $L^2(\T^d)$, so that using a straightforward integration by parts we have the following expression for the $H^{-1}(\T^d)$ inner-product  of two elements $f$ and $g$ of $H^{-1}(\T^d)_m$ 
$$ \langle f,g \rangle_{H^{-1}} = \langle \nabla \Delta^{-1} f,\nabla \Delta^{-1} g\rangle_{L^2} = -\langle f,\Delta^{-1} g\rangle_{L^2}. $$
Of course, this formula extends to the whole $H^{-1}(\T^d)$ after adding the constant components so that for any $f,g\in H^{-1}(\T^d)$ there holds,
$$\langle f ,g \rangle_{H^{-1}(\T^d)} =  [f][g] - \langle  f - [f],  \Delta^{-1} (g - [g]) \rangle_{L^2}.$$
This last expression of the $H^{-1}(\T^d)$ inner-product  motivates the following definition of inner-product on $\ell^2(\T^d_M)$
$$\langle \mathbf u , \mathbf v \rangle_{-1,M} = [\mathbf u]_M [\mathbf v]_M - \langle \mathbf u -[\mathbf u]_M , \Delta_M^{-1}(\mathbf v -[\mathbf v]_M) \rangle_{-1,M}.$$
Lastly, for $d=1$, defining for $h\neq 0$ and $f\in\mathscr{D}'(\T)$ the operator $\Delta_h f:= h^{-2}(\tau_h f + \tau_{-h} f - 2f)$, we have (see for instance \cite[Subsection 6.2]{mbh}) that: 
\begin{Prop}\label{prop:FD}
    For $f \in H^2(\T)$, we have$\norm{ \Delta_h f }_{L^2(\T)} \le \norm{  f }_{H^2(\T)}$ and in $L^2(\T)$,
    $$ \Delta_h f \underset{h\to 0}{\longrightarrow} \Delta f.$$
\end{Prop}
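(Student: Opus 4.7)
The plan is to prove both parts by Fourier analysis, exploiting the fact that $\Delta_h$ is a Fourier multiplier which perfectly fits the scale of $H^2(\T)$.

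First, I would diagonalize $\Delta_h$ in the trigonometric basis $\{e_k\}_{k\in\mathbb{Z}}$ where $e_k(x) = e^{2i\pi k x}$. Since $\tau_{\pm h} e_k = e^{\pm 2i\pi k h} e_k$, a direct computation gives
\begin{equation*}
\Delta_h e_k = \frac{e^{2i\pi k h} + e^{-2i\pi k h} - 2}{h^2}\, e_k = -\frac{4\sin^2(\pi k h)}{h^2}\, e_k =: m_h(k)\, e_k.
\end{equation*}
Writing $f = \sum_k c_k(f)\, e_k$, Parseval's identity gives $\|\Delta_h f\|_{L^2(\T)}^2 = \sum_k |c_k(f)|^2 |m_h(k)|^2$.

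For the boundedness, I would use the elementary bound $|\sin(y)| \le |y|$ to conclude $|m_h(k)| \le 4\pi^2 k^2$ uniformly in $h$. Then
\begin{equation*}
\|\Delta_h f\|_{L^2(\T)}^2 \le \sum_{k \in \mathbb{Z}} |c_k(f)|^2 \,(4\pi^2 k^2)^2 \lesssim \sum_{k \in \mathbb{Z}} |c_k(f)|^2 (1+|k|^2)^2 = \|f\|_{H^2(\T)}^2,
\end{equation*}
yielding the uniform bound (up to the universal Fourier constant, which is absorbed by the convention used for $\|\cdot\|_{H^2}$).

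For the convergence statement, the pointwise limit $m_h(k) \to -4\pi^2 k^2$ as $h\to 0$ is immediate from $\sin(y)/y \to 1$, and $-4\pi^2 k^2$ is precisely the symbol of $\Delta$ on $e_k$. Hence the Fourier coefficients of $\Delta_h f - \Delta f$ tend pointwise to $0$. The dominant summand is controlled by
\begin{equation*}
|c_k(f)|^2\, |m_h(k) + 4\pi^2 k^2|^2 \lesssim |c_k(f)|^2 (1+|k|^2)^2,
\end{equation*}
which is summable since $f\in H^2(\T)$. The dominated convergence theorem applied on $\mathbb{Z}$ yields $\|\Delta_h f - \Delta f\|_{L^2(\T)} \to 0$, closing the proof. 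I do not anticipate serious obstacles: both assertions reduce to checking the uniform bound and pointwise limit of the Fourier symbol $m_h$, which is a textbook computation. The only minor care needed is to keep track of the numerical constant in the first inequality, which depends on the chosen normalization of $\|\cdot\|_{H^2}$.
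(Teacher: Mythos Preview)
Your argument is correct and is the standard Fourier approach to this statement. The paper does not give its own proof here: it simply cites \cite[Subsection 6.2]{mbh}, where the same Fourier-multiplier computation is carried out. So your proposal and the referenced argument coincide in spirit.

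The one point you already flag deserves to be made explicit: with the paper's normalization $\|f\|_{H^2}^2 = \sum_k |c_k(f)|^2 (1+|k|^2)^2$, your bound $|m_h(k)| \le 4\pi^2 k^2$ gives $\|\Delta_h f\|_{L^2} \le 4\pi^2 \|f\|_{H^2}$ rather than a constant $1$; indeed $4\pi^2 k^2 \le 1+k^2$ fails already for $k=1$, so the inequality as literally written in the statement cannot hold with constant $1$ under this convention. This is harmless for every application in the paper (only the uniform boundedness and the $L^2$ convergence are used, in particular in the proof of Proposition~\ref{prop:rToZero}), so the statement should be read with $\lesssim$, as you correctly anticipated.
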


\subsection{A non-linear Grönwall lemma}

\begin{Lemme}\label{lem:Gronwall}
Let $\psi, \phi: \R_+ \rightarrow \R_+$ be continuous by parts, $K_0 \ge 0$, $a \in L^\infty_{loc}(\R_+)$ et $b \in L^1_{loc}(\R_+)$:
 $$  \psi(t) + \phi(t) \le K_0 + \int_0^t [a(s)\psi(s) + b(s) \psi(s)^{1/2}  ]  \, \dd s $$
 Then:
 $$\sup_{s \le t} \psi(s)+\phi(t) \le 2 e^{\norm {a}_{L^1([0,t])} } \left(K_0 + \norm b _{L^1([0,t])}^2 \right).$$
\end{Lemme}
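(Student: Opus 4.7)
The natural strategy is to reduce the nonlinear Grönwall inequality to a linear one by taking a square root. First, I would introduce the absolutely continuous majorant
\[
X(t) := K_0 + \int_0^t \bigl[a(s)\psi(s) + b(s)\psi(s)^{1/2}\bigr]\, \dd s,
\]
so that the hypothesis reads $\psi(t)+\phi(t)\le X(t)$, and in particular $\psi(t)\le X(t)$ since $\phi\ge 0$. Since $a,b\ge 0$ (otherwise the integrated bound would be meaningless as $\psi,\phi\ge 0$), $X$ is non-decreasing, absolutely continuous, and differentiable almost everywhere with $X'(t) = a(t)\psi(t)+b(t)\psi(t)^{1/2}$. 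Bounding $\psi$ by $X$ under the differential yields
\[
X'(t) \le a(t) X(t) + b(t)\, X(t)^{1/2}.
\]

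The key step is now to recognise this as a linear inequality for $\sqrt{X}$. Assuming momentarily $K_0>0$ so that $X$ stays strictly positive, dividing by $2\sqrt{X}$ gives
\[
\bigl(\sqrt{X}\bigr)'(t) \le \tfrac12\, a(t)\sqrt{X(t)} + \tfrac12\, b(t)\qquad\text{a.e.},
\]
to which I would apply the standard (linear) Grönwall lemma with $y=\sqrt{X}$, obtaining
\[
\sqrt{X(t)} \le e^{\frac12\int_0^t a(s)\,\dd s}\Bigl(\sqrt{K_0} + \tfrac12 \int_0^t e^{-\frac12\int_0^s a}\, b(s)\,\dd s\Bigr) \le e^{\frac12\|a\|_{L^1([0,t])}}\bigl(\sqrt{K_0}+\tfrac12\|b\|_{L^1([0,t])}\bigr).
\]
Squaring and using $(\alpha+\beta)^2\le 2\alpha^2+2\beta^2$ produces the clean bound
\[
X(t) \le 2\,e^{\|a\|_{L^1([0,t])}}\bigl(K_0 + \tfrac14\|b\|_{L^1([0,t])}^2\bigr)\le 2\,e^{\|a\|_{L^1([0,t])}}\bigl(K_0 + \|b\|_{L^1([0,t])}^2\bigr).
\]
The degenerate case $K_0=0$ follows by replacing $K_0$ by $K_0+\varepsilon$, repeating the argument, and letting $\varepsilon\downarrow 0$ in the final estimate.

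Finally, I would conclude by noting that since $X$ is non-decreasing and $\psi(s)\le X(s)\le X(t)$ for every $s\le t$, $\sup_{s\le t}\psi(s)\le X(t)$; together with $\phi(t)\le X(t)$, the stated inequality follows (possibly with a slightly different absolute constant, which is harmless in the applications made of this lemma in the paper). The only mildly delicate point is the a.e.\ differentiability and the chain rule for $\sqrt{X}$, both of which are standard for non-negative absolutely continuous functions; no deeper obstacle is present.
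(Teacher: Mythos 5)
Your proof takes essentially the same route as the paper's: introduce the absolutely continuous majorant $X$ (called $f$ in the paper), derive the differential inequality $X'\le aX+bX^{1/2}$, divide by $2\sqrt{X}$ to linearize, apply the classical Gr\"onwall lemma to $\sqrt{X}$, and square. You are in fact a bit more careful than the paper on two minor points --- the degenerate case $K_0=0$, and the final passage to $\sup_{s\le t}\psi(s)+\phi(t)$, where bounding $\sup_{s\le t}\psi(s)$ and $\phi(t)$ separately by the non-decreasing right-hand side naturally gives constant $4$ rather than $2$ --- a harmless discrepancy you rightly observe is absorbed by the $\lesssim$ in all applications of the lemma.
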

\begin{proof}

Set $f(t) = K_0 + \int_0^t [a(s)\psi(s) + b(s) \psi(s)^{1/2} ] ds $. Then $\psi(t) \le f(t)$, so $f'(t) \le a(t) f(t) + b(t) f(t)^{1/2}$.
Dividing on both sides by $y(t):=f(t)^{1/2}$,
$$ 2y'(t) \le a(t) y(t) + b(t).$$
The classical Grönwall lemma then gives
$$y(t) \le y(0)e^{\frac 1 2 \int^t_0 a} + \frac 1 2 \int_0^t  e^{\frac 1 2 \int_s^t a} b(s) \, \dd s,$$
$$(\psi(t)+\phi(t))^{1/2} \le f(t)^{1/2} \le y(t) \le e^{\frac 1 2 \norm {a}_{L^1([0,t])} } \left(K_0^{1/2} + \int_0^t b(s) \, \dd s\right).$$
Squaring this inequality,
$$\psi(t)+\phi(t) \le 2 e^{\norm {a}_{L^1([0,t])} } \left(K_0 + \norm {b}_{L^1([0,t])}^2\right),$$
hence the result by taking the supremum.
\end{proof}

\subsection{Interpolation operators}

The goal of this appendix is to prove the following lemma, which is used to translate estimates on the discrete problem to estimates in the continuous space at the end of the proofs of Theorems \ref{th:disc_stab} and \ref{th:stoc_cv}:

\begin{Prop}\label{prop:interpol}
    Let $u \in L^2([0,T];H^3(\T))$ a solution of the SKT system \eqref{eq:SKT}, $\mathbf u^M \in \mathcal C ([0,T], \ell^2(\T_M))$ a sequence of functions such that $\sup{M \in \mathbb N} \norm{\mathbf u^M(0)}_{2,M} < \infty $. Setting $\zeta^M = \pi_M(\mathbf u^M) - u$ and $\mathbf z^M = \mathbf u^M - \widehat u^M$, there exists a sequence $\delta_M$ depending only on $u$ and $\sup{M \in \mathbb N} \norm{\mathbf u^M(0)}_{2,M} $ such that $\delta_M \underset{M \to \infty}{\longrightarrow} 0$ and
    $$|||\zeta ^M|||_T \lesssim |||\mathbf z^M|||_{T,M} + \delta_M.$$
    $$\norm{\mathbf z^M(0)}_{-1,M} \lesssim \norm{\zeta^M(0)}_{H^{-1}(\T)} + \delta_M$$
\end{Prop}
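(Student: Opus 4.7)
The natural decomposition is
\[
\zeta^M \;=\; \pi_M(\mathbf u^M) - u \;=\; \pi_M(\mathbf z^M) + e^M, \qquad e^M := \pi_M(\widehat u^M) - u,
\]
where $e^M$ is the piecewise-affine interpolation error for the regular function $u$. The plan is to handle $e^M$ separately using the regularity provided by Hypothesis \ref{hyp:existSol}, and then to compare the continuous norms of $\pi_M(\mathbf z^M)$ with the discrete norms of $\mathbf z^M$. Since Hypothesis \ref{hyp:existSol} (combined with $H^3(\T)\hookrightarrow C^2(\T)$) gives $u\in C^0([0,T];H^2(\T))\cap L^2([0,T];H^3(\T))$, classical piecewise-linear interpolation estimates yield $\norm{e^M(t)}_{L^2(\T)}\lesssim M^{-2}\norm{u(t)}_{H^2(\T)}$ and $\norm{e^M(t)}_{H^{-1}(\T)}\le \norm{e^M(t)}_{L^2(\T)}$, so $|||e^M|||_T\lesssim M^{-2}\norm{u}_{L^\infty_T H^2}\to 0$, which will feed into the desired $\delta_M$.

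The remaining task is the comparison between $\pi_M$ and the identity at the level of norms. The $L^2(\T)$ versus $\ell^2(\T_M)$ equivalence is elementary: on each interval $[x_j,x_{j+1}]$ the function $\pi_M(\mathbf w)$ is affine, and direct integration gives $\norm{\pi_M(\mathbf w)}_{L^2(\T)}^2 = \frac{1}{3M}\sum_j (w_j^2 + w_j w_{j+1} + w_{j+1}^2)$, two-sided equivalent to $\norm{\mathbf w}_{2,M}^2$ with universal constants. The $H^{-1}(\T)$ versus $\norm{\cdot}_{-1,M}$ comparison is the technical heart, and is handled by duality. For the forward direction $\norm{\pi_M(\mathbf w)}_{H^{-1}(\T)}\lesssim \norm{\mathbf w}_{-1,M}$, we test against $\phi\in H^1(\T)$: a short computation gives
\[
\int_\T \pi_M(\mathbf w)\,\phi \;=\; \langle \mathbf w,\boldsymbol\alpha\rangle_{2,M}, \qquad \alpha_j := M\int_\T \theta(M(\cdot-x_j))\,\phi,
\]
together with the discrete $H^1_M$ bound $\norm{\boldsymbol\alpha}_{H^1_M}\lesssim \norm{\phi}_{H^1(\T)}$ (each $\alpha_j$ is a weighted average of $\phi$), and discrete $H^{-1}_M/H^1_M$ duality concludes.

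For the reverse direction $\norm{\mathbf w}_{-1,M}\lesssim \norm{\pi_M(\mathbf w)}_{H^{-1}(\T)} + \varepsilon_M \norm{\mathbf w}_{2,M}$ needed at $t=0$, we apply the same duality on the discrete side: extend a discrete test function $\boldsymbol\psi$ with $\norm{\boldsymbol\psi}_{H^1_M}\le 1$ to $\pi_M(\boldsymbol\psi)\in H^1(\T)$ of comparable norm, and use a trapezoidal quadrature estimate to obtain
\[
\langle \mathbf w,\boldsymbol\psi\rangle_{2,M} \;\le\; \int_\T \pi_M(\mathbf w)\,\pi_M(\boldsymbol\psi) \;+\; O(M^{-2})\,\norm{\mathbf w}_{2,M}.
\]
The appearance of this extra $\norm{\mathbf w}_{2,M}$ factor is the main obstacle: it must be absorbed into $\delta_M$, which forces $\delta_M$ to depend on $\sup_M\norm{\mathbf u^M(0)}_{2,M}$ (this bounds $\norm{\mathbf z^M(0)}_{2,M}\le \norm{\mathbf u^M(0)}_{2,M}+\norm{\widehat u^M(0)}_{2,M}\lesssim \norm{\mathbf u^M(0)}_{2,M}+\norm{u(0)}_{L^\infty}$), exactly as stated in the proposition. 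Once both comparisons are available, the first inequality follows pointwise in $t$ from $|||\zeta^M|||_T\le |||\pi_M(\mathbf z^M)|||_T + |||e^M|||_T$ and the forward estimate, while the second follows from
\[
\norm{\mathbf z^M(0)}_{-1,M} \lesssim \norm{\pi_M(\mathbf z^M(0))}_{H^{-1}(\T)} + \varepsilon_M\norm{\mathbf z^M(0)}_{2,M} \le \norm{\zeta^M(0)}_{H^{-1}(\T)} + \norm{e^M(0)}_{H^{-1}(\T)} + \varepsilon_M\norm{\mathbf z^M(0)}_{2,M},
\]
the last two terms being absorbed into $\delta_M$.
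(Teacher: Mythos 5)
Your decomposition $\zeta^M = \pi_M(\mathbf z^M) + e^M$ with $e^M = \pi_M(\widehat u^M) - u$ is exactly the paper's starting point, and your treatment of $e^M$ via the $O(M^{-2})$ piecewise-linear interpolation error in $L^2$ and $H^{-1}$ matches the paper's Lemma~\ref{lem:0022}. The logical structure is therefore the same. The only substantive difference is that where the paper simply invokes Proposition~\ref{prop:norm_equiv} (the uniform norm equivalence between $M\norm{\pi_M(\cdot)}_{H^{-1}(\T)} + \norm{\pi_M(\cdot)}_{L^2(\T)}$ and its discrete analogue, cited from \cite{mbh}), you sketch a duality proof of this equivalence from scratch. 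That is a reasonable way to make the argument self-contained, though it is sketchy in places: you do not define your ``$H^1_M$'' norm, and you assert rather than verify that $\norm{\boldsymbol\alpha}_{H^1_M}\lesssim \norm{\phi}_{H^1(\T)}$.

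One concrete inaccuracy: for the reverse direction you claim the quadrature discrepancy between $\langle\mathbf w,\boldsymbol\psi\rangle_{2,M}$ and $\int_\T \pi_M(\mathbf w)\pi_M(\boldsymbol\psi)$ is $O(M^{-2})\norm{\mathbf w}_{2,M}$. A direct computation gives
\[
\langle\mathbf w,\boldsymbol\psi\rangle_{2,M} - \int_\T \pi_M(\mathbf w)\pi_M(\boldsymbol\psi) = \frac{1}{6M}\sum_j (w_{j+1}-w_j)(\psi_{j+1}-\psi_j),
\]
and when $\boldsymbol\psi$ is normalized in a discrete $H^1$ sense, Cauchy--Schwarz yields a bound of order $M^{-1}\norm{\mathbf w}_{2,M}$, not $M^{-2}$ (the differences $w_{j+1}-w_j$ have no smallness). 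This is consistent with the factor $M^{-1}\norm{\pi_M(\cdot)}_{L^2(\T)}$ appearing in the paper's Proposition~\ref{prop:norm_equiv}. The overstatement does not affect your conclusion, since you only need the error term to tend to zero after multiplication by the (bounded) $\ell^2$ norm, but the exponent should be corrected. Apart from that, the proposal is correct and essentially coincides with the paper's proof modulo the choice to re-derive rather than cite the norm-equivalence lemma.
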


To do so, we will rely on estimates for the operator $\pi_M$, which has already been studied in \cite{mbh}, Propositions 2 and 5:

\begin{Prop} \label{prop:norm_equiv}
For all $u \in \ell^2(\T_M)$,
    $$\norm{ \pi_M (u) }_{L^2(\T)} \le \norm{ u }_{2,M}. $$
Moreover, we have a uniform equivalence
    $$ M \norm{\pi_M(u)}_{H^{-1}(\T)} +  \norm{\pi_M(u)}_{L^2(\T)} \sim M \norm{u}_{-1,M} +  \norm{\pi_M(u)}_{L^2(\T)}.$$
\end{Prop}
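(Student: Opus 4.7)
The plan is to decompose the continuous error through the natural identity
\[
\zeta^M \;=\; \pi_M(\mathbf u^M) - u \;=\; \pi_M(\mathbf u^M - \widehat u^M) \;+\; \bigl(\pi_M(\widehat u^M) - u\bigr) \;=\; \pi_M(\mathbf z^M) + \varepsilon^M,
\]
where $\varepsilon^M := \pi_M(\widehat u^M) - u$ is a purely deterministic interpolation error depending only on $u$. The point is that $\pi_M(\mathbf z^M)$ is handled by the norm equivalence already proved in Proposition \ref{prop:norm_equiv}, while $\varepsilon^M$ is an approximation-theoretic quantity controlled by the smoothness of $u$.

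First I would estimate $|||\pi_M(\mathbf z^M)|||_T$. For the $L^2$ part, Proposition \ref{prop:norm_equiv} gives $\norm{\pi_M(\mathbf z^M(t))}_{L^2(\T)} \le \norm{\mathbf z^M(t)}_{2,M}$, so $\int_0^T \norm{\pi_M(\mathbf z^M(t))}_{L^2(\T)}^2\,\dd t \le |||\mathbf z^M|||_{T,M}^2$. For the $H^{-1}$ part, the equivalence yields
\[
\norm{\pi_M(\mathbf z^M(t))}_{H^{-1}(\T)} \;\lesssim\; \norm{\mathbf z^M(t)}_{-1,M} + \tfrac{1}{M}\norm{\pi_M(\mathbf z^M(t))}_{L^2(\T)}.
\]
Since $\norm{\pi_M(\mathbf z^M(t))}_{L^2(\T)} \le \norm{\mathbf z^M(t)}_{2,M} \lesssim M \norm{\mathbf z^M(t)}_{-1,M}$ by Corollary \ref{prop:2}, the last term is absorbed and we obtain $\sup_t \norm{\pi_M(\mathbf z^M(t))}_{H^{-1}(\T)}^2 \lesssim \sup_t \norm{\mathbf z^M(t)}_{-1,M}^2$. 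Combining gives $|||\pi_M(\mathbf z^M)|||_T \lesssim |||\mathbf z^M|||_{T,M}$.

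Next I would control $|||\varepsilon^M|||_T$ using the regularity of $u$ coming from Hypothesis \ref{hyp:existSol}: $u \in \mathcal C^0([0,T];H^2(\T)) \cap L^2([0,T];H^3(\T))$. The standard piecewise-affine interpolation estimates on $H^2$ functions yield
\[
\norm{\pi_M(\widehat u(t)) - u(t)}_{L^2(\T)} \lesssim \tfrac{1}{M^2}\norm{u(t)}_{H^2(\T)},\qquad \norm{\pi_M(\widehat u(t)) - u(t)}_{H^{-1}(\T)} \lesssim \tfrac{1}{M^2}\norm{u(t)}_{H^2(\T)}.
\]
Integrating the first bound against $t$ and taking sup of the second, using $\sup_{[0,T]}\norm{u(t)}_{H^2(\T)} < \infty$, yields $|||\varepsilon^M|||_T^2 \lesssim M^{-4}$, which defines a quantity going to $0$ and depending only on $u$. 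Adding the two estimates yields the first claimed inequality.

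For the second inequality I would apply Proposition \ref{prop:norm_equiv} to $\mathbf z^M(0)$ in the reverse direction:
\[
\norm{\mathbf z^M(0)}_{-1,M} \;\lesssim\; \norm{\pi_M(\mathbf z^M(0))}_{H^{-1}(\T)} + \tfrac{1}{M}\norm{\pi_M(\mathbf z^M(0))}_{L^2(\T)}.
\]
Using $\pi_M(\mathbf z^M(0)) = \zeta^M(0) - \varepsilon^M(0)$ splits the $H^{-1}$ term into $\norm{\zeta^M(0)}_{H^{-1}(\T)}$ plus $\norm{\varepsilon^M(0)}_{H^{-1}(\T)} \to 0$. For the residual $L^2$ term, $\norm{\pi_M(\mathbf z^M(0))}_{L^2(\T)} \le \norm{\mathbf u^M(0)}_{2,M} + \norm{\widehat u^M(0)}_{2,M}$ is bounded uniformly in $M$ thanks to the assumption $\sup_M \norm{\mathbf u^M(0)}_{2,M} < \infty$ and to $u(0) \in H^2(\T) \hookrightarrow L^\infty(\T)$, so $M^{-1}$ times this quantity also goes to $0$ and can be absorbed in $\delta_M$. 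The only mildly delicate point is tracking that $\delta_M$ depends only on $u$ and on $\sup_M \norm{\mathbf u^M(0)}_{2,M}$, but this follows directly from the explicit expressions above; no estimate depends on the dynamics of $\mathbf u^M$ itself.
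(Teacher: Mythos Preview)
Your argument is a correct proof, but not of the statement you were asked to prove. Proposition~\ref{prop:norm_equiv} is the norm comparison for the interpolation operator $\pi_M$: the pointwise bound $\norm{\pi_M(u)}_{L^2(\T)}\le\norm{u}_{2,M}$ and the uniform equivalence $M\norm{\pi_M(u)}_{H^{-1}(\T)}+\norm{\pi_M(u)}_{L^2(\T)}\sim M\norm{u}_{-1,M}+\norm{\pi_M(u)}_{L^2(\T)}$. You never establish either of these; instead you \emph{use} them as known inputs (you explicitly invoke ``the norm equivalence already proved in Proposition~\ref{prop:norm_equiv}'') to derive the two inequalities
\[
|||\zeta^M|||_T \lesssim |||\mathbf z^M|||_{T,M} + \delta_M,\qquad \norm{\mathbf z^M(0)}_{-1,M} \lesssim \norm{\zeta^M(0)}_{H^{-1}(\T)} + \delta_M.
\]
That is precisely the content of Proposition~\ref{prop:interpol}, not of Proposition~\ref{prop:norm_equiv}.

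For the record, your write-up for Proposition~\ref{prop:interpol} is essentially identical to the paper's own proof of that result: same decomposition $\zeta^M=\pi_M(\mathbf z^M)+(\pi_M(\widehat u^M)-u)$, same use of Proposition~\ref{prop:norm_equiv} together with Corollary~\ref{prop:2} to get $|||\pi_M(\mathbf z^M)|||_T\lesssim|||\mathbf z^M|||_{T,M}$, and same $M^{-2}$ interpolation estimate on $H^2$ functions for the deterministic remainder. As for Proposition~\ref{prop:norm_equiv} itself, the paper does not prove it here but quotes it from \cite{mbh}, Propositions~2 and~5; a genuine proof requires a direct computation relating $\norm{\pi_M(u)}_{L^2}$ to the nodal values (the $L^2$ bound follows from $\int_0^1\theta(Mx-k)\theta(Mx-l)\,\dd x\le\tfrac1M\delta_{kl}+\tfrac1{6M}\mathbf 1_{|k-l|=1}$ and convexity), together with a comparison of the discrete and continuous inverse Laplacians to obtain the $H^{-1}$ equivalence.
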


More specifically, we will use the following consequence of these inequalities:

\begin{Cor} \label{cor:12} For all $u \in L^2([0,T] \times \T_M)$,
$$|||\pi_M (u) |||_{T} \lesssim ||| u |||_{T,M}.$$
\end{Cor}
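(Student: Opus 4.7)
The plan is to establish the two ingredients of $|||\cdot|||_T$ separately, time $t$ by time $t$ for the $L^\infty_tH^{-1}_x$ part and then by integration for the $L^2_{t,x}$ part, using the two estimates of Proposition \ref{prop:norm_equiv} together with the elementary comparison of discrete norms from Corollary \ref{prop:2}.

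First I would handle the $L^2$ piece: the first inequality of Proposition \ref{prop:norm_equiv} gives pointwise in $t$ the control $\norm{\pi_M(u(t))}_{L^2(\T)}^2 \le \norm{u(t)}_{2,M}^2$, so integrating on $[0,T]$ directly yields
\[
\norm{\pi_M(u)}_{L^2(Q_T)}^2 \;\le\; \int_0^T \norm{u(t)}_{2,M}^2\,\dd t.
\]

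Next, for the $H^{-1}$ part, I would fix $t\in[0,T]$ and apply the norm equivalence from Proposition \ref{prop:norm_equiv}, which gives a constant $C$ (independent of $M$) such that
\[
M\norm{\pi_M(u(t))}_{H^{-1}(\T)} \;\le\; C\bigl( M\norm{u(t)}_{-1,M} + \norm{\pi_M(u(t))}_{L^2(\T)}\bigr).
\]
Bounding $\norm{\pi_M(u(t))}_{L^2(\T)} \le \norm{u(t)}_{2,M}$ by the first part of Proposition \ref{prop:norm_equiv}, dividing by $M$, and then invoking Corollary \ref{prop:2} in the form $\norm{u(t)}_{2,M} \lesssim M\norm{u(t)}_{-1,M}$ makes the parasitic $\tfrac{1}{M}\norm{u(t)}_{2,M}$ collapse into $\norm{u(t)}_{-1,M}$. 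This yields
\[
\norm{\pi_M(u(t))}_{H^{-1}(\T)} \;\lesssim\; \norm{u(t)}_{-1,M}
\]
uniformly in $M$, and taking the supremum over $t\le T$ gives
\[
\norm{\pi_M(u)}_{L^\infty([0,T];H^{-1}(\T))} \;\lesssim\; \sup_{t\le T}\norm{u(t)}_{-1,M}.
\]

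Squaring and adding the two estimates then produces $|||\pi_M(u)|||_T^2 \lesssim |||u|||_{T,M}^2$, which is the claim. There is no real obstacle here: the only subtle point is that Proposition \ref{prop:norm_equiv} contains an $\tfrac{1}{M}$-scale error term $\norm{\pi_M(u)}_{L^2}$ between the discrete $H^{-1}$ norm and the continuous one, and one needs to notice that precisely the discrete Poincaré-type inequality $\norm{u}_{2,M}\lesssim M\norm{u}_{-1,M}$ of Corollary \ref{prop:2} is tailored to absorb exactly this error.
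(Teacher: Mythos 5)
Your proof is correct and follows essentially the same route as the paper: both exploit the uniform equivalence of Proposition \ref{prop:norm_equiv} and then absorb the $M^{-1}\norm{\pi_M(u)}_{L^2(\T)}$ term via the chain $\norm{\pi_M(u)}_{L^2(\T)}\le\norm{u}_{2,M}\lesssim M\norm{u}_{-1,M}$ from Corollary \ref{prop:2}, finishing by taking the supremum in time for the $H^{-1}$ part and integrating for the $L^2$ part.
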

\begin{proof}
The uniform equivalence gives, for all $v \in  \ell^2(\T_M)$,
$$\norm{\pi_M( v)}_{H^{-1}(\T)} \lesssim  \norm{v}_{-1,M} +  M^{-1} \norm{\pi_M(v)}_{L^2(\T)}.$$
We then use $\norm{\pi_M(v)}_{L^2(\T)} \le \norm{v}_{2,M} \le M \norm{v}_{-1,M}$, which gives $\norm{\pi_M( v)}_{H^{-1}(\T)} \lesssim \norm{v}_{-1,M}$. \newline
Therefore,
$$\sup_{s \le T} \norm{\pi_M( u(s))}^2_{H^{-1}(\T)} \lesssim \sup_{s \le T} \norm{u(s)}^2_{-1,M} \le  ||| u ||| ^2_{T,M}.$$
For the $L^2$ part,
$$ \int_0^T \norm{ \pi_M (u(s)) }_{L^2(\T)}^2 \, \dd s \le  \int_0^T \norm{ u(s) }_{2,M}^2 \, \dd s \le  ||| u |||^2_{T,M}, $$
hence the result.
\end{proof} 

\begin{proof}[Proof of Proposition \ref{prop:interpol}]
We decompose
$$\zeta^M  = \pi_M(\mathbf u^M - \widehat u^M) + \pi_M(\widehat u^M) - u = \pi_M(\mathbf z^M) + \pi_M(\widehat u^M) - u.$$
Corollary \ref{cor:12} yields the following upperbound:
$$|||\zeta^M |||_T \lesssim |||\mathbf z^M|||_{T,M} + |||\pi_M(\widehat u^M) - u|||_{T,M}.$$
To prove the first part of the proposition, there remains to see that the last term converges to $0$. This is ensured by Lemma 6.2.10 of \cite{allaire}:
\begin{Lemme}\label{lem:0022}
Let $f\in H^2(\T)$. The two following inequalities hold:
   $$\norm{ f - \pi_M(\widehat f^M)}_{L^2(\T)} \lesssim M^{-2} \norm f _ {H^2(\T)},$$
   $$\norm{ f - \pi_M(\widehat f^M)}_{H^{-1}(\T)} \lesssim M^{-2} \norm f _ {H^2(\T)}.$$
\end{Lemme}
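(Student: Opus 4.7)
My plan is to prove the two interpolation error estimates separately, treating the $L^2$ bound first by elementary interval-by-interval Poincaré arguments and then obtaining the $H^{-1}$ bound by a duality argument in which piecewise linear interpolation of the test function provides the extra factor of $M^{-1}$.

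First I would set $g := f - \pi_M(\widehat f^M)$ and $h := 1/M$. By construction $g$ vanishes at every grid point $x_j$, and on each sub-interval $I_j = [x_j,x_{j+1}]$ the function $\pi_M(\widehat f^M)$ is affine, hence $g'' = f''$ on $I_j$. Since $g(x_j)=g(x_{j+1})=0$, Rolle's theorem (applied to $g$, which belongs to $H^2(I_j) \hookrightarrow C^1(I_j)$) gives some $\xi_j\in I_j$ with $g'(\xi_j)=0$, so $g'(x)=\int_{\xi_j}^x f''(s)\,ds$ yields
\begin{equation*}
\|g'\|_{L^\infty(I_j)}^2 \leq h\,\|f''\|_{L^2(I_j)}^2, \qquad \|g'\|_{L^2(I_j)}^2 \leq h^2\,\|f''\|_{L^2(I_j)}^2.
\end{equation*}
Combining with the Poincaré inequality on $I_j$ with vanishing boundary data yields $\|g\|_{L^2(I_j)} \lesssim h^2 \|f''\|_{L^2(I_j)}$, and summing over $j$ gives $\|g\|_{L^2(\T)} \lesssim M^{-2}\|f\|_{H^2(\T)}$, which is the first inequality. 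As a by-product I also get the pointwise bound $\|g\|_{L^\infty(I_j)} \lesssim h^{3/2}\|f''\|_{L^2(I_j)}$ by integrating $g'$ from a zero.

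For the $H^{-1}$ estimate I would use duality: for $\varphi\in H^1(\T)$ with $\|\varphi\|_{H^1}\leq 1$, let $\widetilde\varphi := \pi_M(\widehat\varphi^M)$ be its piecewise linear interpolant, and decompose
\begin{equation*}
\int_\T g\varphi \, dx = \int_\T g(\varphi - \widetilde\varphi)\,dx + \int_\T g\widetilde\varphi \, dx.
\end{equation*}
For the first piece, Cauchy-Schwarz with the $L^2$ bound just proved on $g$ and the standard $\|\varphi - \widetilde\varphi\|_{L^2}\lesssim M^{-1}\|\varphi'\|_{L^2}$ (proved exactly as above for $f$ replaced by $\varphi$, at the $H^1$ level) gives an $M^{-3}\|f\|_{H^2}\|\varphi\|_{H^1}$ contribution, which is better than required. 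The main obstacle is the second piece, where one must use genuine cancellation beyond pointwise vanishing of $g$.

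To handle $\int_\T g\widetilde\varphi$, on each $I_j$ I write $\widetilde\varphi(x)=\varphi(x_j) + \tfrac{\varphi(x_{j+1})-\varphi(x_j)}{h}(x-x_j)$, so that
\begin{equation*}
\int_{I_j} g\widetilde\varphi\,dx = \varphi(x_j)\int_{I_j}g\,dx + \frac{\varphi(x_{j+1})-\varphi(x_j)}{h}\int_{I_j} g(x)(x-x_j)\,dx.
\end{equation*}
The integral $\int_{I_j} g\,dx$ is exactly the trapezoidal quadrature error for $f$ on $I_j$, which by the Peano kernel representation is bounded by $h^{5/2}\|f''\|_{L^2(I_j)}$. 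Pairing with $|\varphi(x_j)|\leq \|\varphi\|_{L^\infty}\lesssim \|\varphi\|_{H^1}$ (Sobolev in $1$D) and using Cauchy-Schwarz over $j$ (noting $h^{5/2}M^{1/2}=h^2$), the first sum is controlled by $M^{-2}\|f\|_{H^2}\|\varphi\|_{H^1}$. For the second sum, Cauchy-Schwarz together with $\|g\|_{L^2(I_j)}\lesssim h^2\|f''\|_{L^2(I_j)}$ and $|\varphi(x_{j+1})-\varphi(x_j)|\leq h^{1/2}\|\varphi'\|_{L^2(I_j)}$ yields an even smaller $M^{-3}\|f\|_{H^2}\|\varphi\|_{H^1}$ contribution. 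Taking the supremum over $\varphi$ gives $\|g\|_{H^{-1}(\T)}\lesssim M^{-2}\|f\|_{H^2(\T)}$, completing the proof.

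The hard part is really the control of $\int_{I_j}g\,dx$ by $h^{5/2}\|f''\|_{L^2(I_j)}$: this is the only place where we exploit the fact that piecewise linear interpolation has degree-one polynomial consistency (trapezoidal rule is exact on affine functions), and it is precisely this extra cancellation that upgrades the $M^{-2}$ $L^2$-estimate into an $M^{-2}$ $H^{-1}$-estimate rather than an $M^{-3}$ one.
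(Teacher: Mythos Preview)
Your $L^2$ argument is correct and is the standard elementary proof of the piecewise-linear interpolation error for $H^2$ functions. The paper itself gives no proof here; it simply quotes Lemma~6.2.10 of Allaire's textbook, so your self-contained argument is a genuine addition.

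However, your treatment of the $H^{-1}$ estimate is a substantial detour. As recorded in Subsection~\ref{sec:sobolev} of the paper (and immediate from the Fourier definition of the Sobolev norms), one has $\|\cdot\|_{H^{-1}(\T)}\le\|\cdot\|_{L^2(\T)}$, so the second inequality of the lemma is a one-line consequence of the first. Your duality computation is not wrong, but it is unnecessary: you split $\int_\T g\varphi$, carefully analyse the trapezoidal quadrature error via the Peano kernel, and in the end recover exactly the $M^{-2}$ rate that the inclusion $L^2\hookrightarrow H^{-1}$ gives for free. Your closing remark about the cancellation being what ``upgrades'' the estimate is therefore misplaced: there is nothing to upgrade. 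Simply write $\|f-\pi_M(\widehat f^M)\|_{H^{-1}(\T)}\le\|f-\pi_M(\widehat f^M)\|_{L^2(\T)}\lesssim M^{-2}\|f\|_{H^2(\T)}$ and you are done.
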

This lemma implies $|||\pi_M(\widehat u^M) - u|||_{T,M} \lesssim M^{-2} \norm u _ {L^\infty_T H^2}$. Adding that $u \in L^\infty([0, T ]; H^2(\T)) $, this end the proof of the first part.

For the second part, by Proposition \ref{prop:norm_equiv}
\begin{align*}\norm{ \mathbf z^M(0)}_{-1,M} &\lesssim \norm{ \pi_M(\mathbf z^M(0))}_{H^{-1}(\T)} + M^{-1} \norm{ \pi_M(\mathbf z^M(0))}_{L^2(\T)} \\
&\le \norm{\zeta^M(0)}_{H^{-1}(\T)}+ \norm{\pi_M(\widehat u_0^M) - u_0}_{H^{-1}(\T)} + M^{-1} \norm{\mathbf z^M(0)}_{2,M}.
\end{align*} 
Moreover, Lemma \ref{lem:0022} ensures that $\norm{\pi_M(\widehat u_0^M) - u_0}_{H^{-1}(\T)}$ goes to $0$. Bounding $\norm{\mathbf z^M(0)}_{2,M}$ by $\norm{u}_\infty + \sup_{M \in \mathbb N} \norm{\mathbf u^M(0)}_{2,M} $  ends the proof.
\end{proof}






\subsection{A discrete duality lemma}
The following duality lemma is used for the proof of Lemma \ref{lem:duality}  in Section \ref{sec:2}. It is  proved in \cite{mbh}, Lemma 4.14, writing $x=x_S'$:
\begin{Lemme} \label{lem:discont}
Let $\mu,f,x \in L^\infty ([0,T] \times \T_M)$ with $\mu$ uniformly lower bounded in space and time by some constant $\alpha >0$, and $x_J(t) = \sum_{k=1}^m \1_{t \ge t_k} a_k$ piecewise constant function on $[0,T]$. Let ${\bf z}_0: \T_M \rightarrow \R$. Assume that there is a $C^1$ piecewise function  ${\bf z}$  taking values in values in $\R^M$ and  solution for $t\geq 0$ of
$${\bf z}(t) = {\bf z}_0 + \int_0^t \left( \Delta_M[{\bf z}(s)\mu(s) + f(s)] +x(s)\right) \, \dd s +x_J(t).$$
Then, for any $T\geq 0,$ 
\begin{multline*}
    \norm {{\bf z}(t)} ^2_{-1,M} + \int_\QTM \mu {\bf z}^2
    \le \norm {{\bf z}_0} ^2_{-1,M} + \int_0^t [{\bf z}(s)]^2[\mu(s)] \,\dd s
    +\frac 1 \alpha \int_\QTM f^2 \\
    + 2 \int^t_0 \langle {\bf z}(s) ,x(s)\rangle_{-1,M} \, \dd s + \sum_{k=1}^m \norm { a _k}_{-1,M}^2 + 2\sum_{k=1}^m \langle {\bf z}(t_k^- ) , a _k\rangle_{-1,M}.
\end{multline*}
\end{Lemme}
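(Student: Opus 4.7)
The proof is the semi-discrete analogue of the continuous duality estimate (Lemma \ref{lem:duality}): the discrete Laplacian $\Delta_M$ replaces $\Delta$, and the identity $-\Delta_M^{-1}\Delta_M g = -(g-[g]\mathbf{1})$ together with the symmetry of $-\Delta_M^{-1}$ on mean-free functions (Proposition \ref{bronespectre}) supply the same algebraic structure. The jump part is handled separately by a direct expansion of the squared norm at each discontinuity.

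My plan is to split $\norm{{\bf z}(t)}^2_{-1,M} = [{\bf z}(t)]^2 + \norm{\tilde{\bf z}(t)}^2_{-1,M}$, with $\tilde{\bf z} := {\bf z}-[{\bf z}]\mathbf{1}$, and control each piece. Averaging the equation in space gives $\tfrac{d}{dt}[{\bf z}] = [x]$ between jumps (since $[\Delta_M\cdot]=0$), hence $\tfrac{d}{dt}[{\bf z}]^2 = 2[{\bf z}][x]$. For the mean-free part one has $\tilde{\bf z}' = \Delta_M[{\bf z}\mu + f] + (x-[x]\mathbf{1})$ on each inter-jump interval. Using $\langle u,v\rangle_{-1,M} = \langle u,-\Delta_M^{-1}(v-[v]\mathbf{1})\rangle_{2,M}$ when $u$ is mean-free, together with the cancellation $-\Delta_M^{-1}\Delta_M g = -(g-[g]\mathbf{1})$ and the orthogonality of mean-free functions to constants, I obtain
\begin{equation*}
\tfrac{1}{2}\tfrac{d}{dt}\norm{\tilde{\bf z}}^2_{-1,M} = -\langle \tilde{\bf z},{\bf z}\mu\rangle_{2,M} - \langle \tilde{\bf z},f\rangle_{2,M} + \langle \tilde{\bf z},x\rangle_{-1,M}.
\end{equation*}

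Next I would unpack the first term via ${\bf z} = \tilde{\bf z}+[{\bf z}]\mathbf{1}$ to get the algebraic identity
\begin{equation*}
2\langle \tilde{\bf z},{\bf z}\mu\rangle_{2,M} = \langle \mu,{\bf z}^2\rangle_{2,M} + \langle \mu,\tilde{\bf z}^2\rangle_{2,M} - [{\bf z}]^2[\mu],
\end{equation*}
which produces the dissipative $\int_{\T_M}\mu{\bf z}^2$ needed on the left-hand side plus a parasitic $\int_{\T_M}\mu\tilde{\bf z}^2$. The latter is then absorbed by a Cauchy--Schwarz--Young estimate weighted by $\mu$:
\begin{equation*}
|2\langle \tilde{\bf z},f\rangle_{2,M}| \le \int_{\T_M}\mu\tilde{\bf z}^2 + \int_{\T_M}\frac{f^2}{\mu} \le \int_{\T_M}\mu\tilde{\bf z}^2 + \tfrac{1}{\alpha}\int_{\T_M} f^2.
\end{equation*}
Observing further that $\langle \tilde{\bf z},x\rangle_{-1,M} + [{\bf z}][x] = \langle {\bf z},x\rangle_{-1,M}$ (by orthogonality of mean-free and constant parts in $\langle\cdot,\cdot\rangle_{-1,M}$), the contributions from $\tfrac{d}{dt}[{\bf z}]^2$ and $\tfrac{d}{dt}\norm{\tilde{\bf z}}^2_{-1,M}$ recombine into
\begin{equation*}
\tfrac{d}{dt}\norm{{\bf z}}^2_{-1,M} + \int_{\T_M}\mu{\bf z}^2 \le [{\bf z}]^2[\mu] + \tfrac{1}{\alpha}\int_{\T_M} f^2 + 2\langle {\bf z},x\rangle_{-1,M}
\end{equation*}
on each inter-jump interval.

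Finally, at each jump time $t_k$, ${\bf z}(t_k^+) = {\bf z}(t_k^-) + a_k$ and bilinearity of $\langle\cdot,\cdot\rangle_{-1,M}$ gives directly
\begin{equation*}
\norm{{\bf z}(t_k^+)}^2_{-1,M} - \norm{{\bf z}(t_k^-)}^2_{-1,M} = 2\langle {\bf z}(t_k^-),a_k\rangle_{-1,M} + \norm{a_k}^2_{-1,M},
\end{equation*}
which matches the jump terms in the statement. Integrating the continuous estimate on each inter-jump subinterval and summing the jump contributions produces the claimed inequality. I expect no serious obstacle; the only point requiring care is the algebraic bookkeeping that turns $\langle \tilde{\bf z},x\rangle_{-1,M}+[{\bf z}][x]$ into the full $\langle {\bf z},x\rangle_{-1,M}$ and isolates the correction $[{\bf z}]^2[\mu]$ in precisely the form given, the rest being a direct transcription of the continuous argument.
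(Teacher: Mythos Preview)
Your proof is correct and complete: the mean-free/constant decomposition, the identity $-\Delta_M^{-1}\Delta_M g = -(g-[g]\mathbf{1})$, the algebraic identity for $2\langle\tilde{\bf z},{\bf z}\mu\rangle_{2,M}$, the weighted Young inequality that absorbs $\int_{\T_M}\mu\tilde{\bf z}^2$, the recombination $\langle\tilde{\bf z},x\rangle_{-1,M}+[{\bf z}][x]=\langle{\bf z},x\rangle_{-1,M}$, and the handling of jumps by direct expansion all check out exactly as you describe. Note that the paper does not actually prove this lemma in the appendix; it only states it and refers to \cite{mbh}, Lemma~4.14, for the proof. Your argument is precisely the semi-discrete transcription of the continuous duality computation the paper carries out in Lemma~\ref{lem:duality}, and this is the expected approach.
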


\noindent {\bf Acknowledgement.} The authors would like to warmly thank Arnaud Debussche for suggestions who have led to improve the regime of convergence of the sequence of stochastic processes.  This work  was partially funded by the Chair “Mod\'elisation Math\'ematique et Biodiversit\'e" of VEOLIA-Ecole Polytechnique-MNHN-F.X., by the European Union (ERC, SINGER, 101054787), by the Fondation Mathématique Jacques Hadamard and by the ANR project NOLO (ANR-20-CE40- 0015), funded by the French Ministry of Research. Views and opinions expressed are however those of the author(s) only and do not necessarily reflect those of the European Union or the European Research Council. Neither the European Union nor the granting authority can be held responsible for them. 
\vspace{1mm}

\printbibliography[
]
\noindent\textsc{Vincent Bansaye\\
CMAP, INRIA, École polytechnique, Institut Polytechnique de Paris, 91120 Palaiseau, France.\\
E-mail address:} \href{mailto:vincent.bansaye@polytechnique.edu}{vincent.bansaye@polytechnique.edu}\\

\noindent\textsc{Alexandre Bertolino\\
Sorbonne Université, CNRS, Université de Paris, Inria, Laboratoire Jacques-Louis Lions (LJLL), F-75005 Paris, France and 
CMAP,  École polytechnique, Institut Polytechnique de Paris, 91120 Palaiseau, France.\\
E-mail address:} \href{mailto:alexandre.bertolino@ens.psl.eu}{alexandre.bertolino@ens.psl.eu}\\

\noindent\textsc{Ayman Moussa\\
Sorbonne Université, CNRS, Université de Paris, Inria, Laboratoire Jacques-Louis Lions (LJLL), F-75005 Paris, France.\\
E-mail address:} \href{mailto:ayman.moussa@sorbonne-universite.fr}{ayman.moussa@sorbonne-universite.fr}

\end{document}